\newtheoremstyle{boldremark}
    {\dimexpr\topsep/2\relax} % space above
    {\dimexpr\topsep/2\relax} % space below
    {}          % body font
    {}          % indent amount
    {\bfseries} % theorem head font
    {.}         % punctuation after theorem head
    {.5em}      % space after theorem head
    {}          % theorem hed spec. (empty = "normal")
\theoremstyle{plain}
\newtheorem{theorem}{Theorem}[section]
\newtheorem{lemma}[theorem]{Lemma}
\newtheorem{corollary}[theorem]{Corollary}
\newtheorem{proposition}[theorem]{Proposition}
\theoremstyle{definition}
\newtheorem{definition}[theorem]{Definition}
\newtheorem{example}[theorem]{Example}
\newtheorem{conjecture}[theorem]{Conjecture}
\newtheorem*{assumption*}{\assumptionnumber}
\providecommand{\assumptionnumber}{}
\theoremstyle{boldremark}
\newtheorem{remark}[theorem]{Remark}
\numberwithin{equation}{section}
\newcommand{\E}[2][]{\mathbb{E}_{#1}\left[#2\right ]}
\renewcommand{\P}[2][]{\mathbb{P}_{#1}\left(#2\right )}
\newcommand{\R}{\mathbb{R}}
\newcommand{\N}{\mathbb{N}}
\newcommand{\ind}[1]{\mathds{1}_{#1}}
\renewcommand{\L}{\Lambda}
\newcommand{\Linfmin}{\underline{\Lambda}}
\newcommand{\EmpM}{\mu}
\newcommand{\EmpMphys}{\hat{\mu}}
\newcommand{\EmpMext}{\xi}
\newcommand{\EmpMextphys}{\hat{\xi}}
\newcommand{\muext}{\xi}
\newcommand{\cadlag}{c\`adl\`ag~}
\newcommand{\Linfphys}{\hat \Lambda}
\newcommand{\Lmin}{\underline{\mathsf{L}}}
\newcommand{\taumin}{\underline{\tau}}
\newcommand{\tauphys}{\hat \tau}
\newcommand{\Xmin}{\underline{X}}
\newcommand{\Xphys}{\hat X}
\newcommand{\Lphys}{\hat \Lambda}
\newcommand{\Lpertmin}{\underline{\tilde{\Lambda}}{}}
\newcommand{\Xpert}{\tilde{X}}
\newcommand{\Xpertmin}{\underline{\tilde{X}}{}}
\newcommand{\EmpMpert}{\underline{\tilde{\mu}}{}}
\newcommand{\taupert}{\tilde{\tau}}
\newcommand{\lawmin}{\underline{\mu}}
\newcommand{\lawphys}{\hat{\mu}}
\newcommand{\nearopt}{\zeta}
\newcommand{\GammaNpert}{\tilde{\Gamma}_N}
\newcommand{\PErandom}{\mathcal{R}}
\newcommand{\DistFunctions}{M}
\newcommand{\Graph}{\mathscr{G}}
\newcommand{\ExtendedE}{\bar{E}}
\newcommand{\ellfunc}{\lambda}
\newcommand{\lawXnull}{\nu_{0-}}
\newcommand{\lawstopped}{\nu}
\newcommand{\lawstoppedmin}{\underline{\nu}}
\newcommand{\lawstoppedphys}{\hat{\nu}{}}
\newcommand{\Levymetric}{d_L}
\newcommand{\e}{\varepsilon}
\newcommand{\ee}{\delta}
\newcommand{\law}{\operatorname{law}}
\newcommand{\ExtendedEproj}{\overline{\pi}}
\newcommand\mydots{\hbox to 1em{.\hss.\hss.}}
\title{Propagation of minimality in the supercooled Stefan problem }
\author{Christa Cuchiero\thanks{Vienna University, Department of Statistics and Operations Research, Data Science @ Uni Vienna, Kolingasse 14-16, A-1090 Wien, Austria, christa.cuchiero@univie.ac.at}
\and Stefan Rigger \thanks{Vienna University, Faculty of Mathematics, Kolingasse 14-16, A-1090 Wien, Austria, stefan.rigger@univie.ac.at.}
\and Sara Svaluto-Ferro\thanks{Vienna University, Faculty of Mathematics, Kolingasse 14-16, A-1090 Wien, Austria, sara.svaluto-ferro@univie.ac.at.\newline
The authors gratefully acknowledge financial support by the Vienna Science and Technology Fund (WWTF) under grant MA16-021. 
}}
\date{}
\begin{document}
\maketitle

\begin{abstract}
Supercooled Stefan problems describe the evolution of the %phase 
boundary between the solid and liquid phases of a substance, where the liquid is assumed to be cooled below its freezing point. 
Following the methodology of Delarue, Nadtochiy and Shkolnikov, 
we construct solutions to the one-phase one-dimensional supercooled Stefan problem through a certain McKean--Vlasov equation, which
allows to define global solutions even in the presence of blow-ups. 
Solutions to the McKean--Vlasov equation arise as mean-field limits of particle systems interacting through hitting times, which is important for systemic risk modeling.
Our main contributions are: (i) A general tightness theorem for the Skorokhod $M_1$-topology which applies to processes that can be decomposed into a continuous and a monotone part.  (ii) A propagation of chaos result for a perturbed version of the particle system for general initial conditions. (iii) The proof of a conjecture of Delarue, Nadtochiy and Shkolnikov, relating the solution concepts of so-called minimal and physical solutions, showing that minimal solutions of the McKean--Vlasov equation are physical whenever the initial condition is integrable.
\end{abstract}

\noindent\textbf{Keywords:} supercooled Stefan problem, McKean--Vlasov equations, singular interactions, propagation of chaos, systemic risk \\
\noindent \textbf{MSC (2020) Classification:}  60H30, 60K35, 35Q84
%\tableofcontents

\section{Introduction}
\subsection{The classical supercooled Stefan problem}
Stefan problems are models for the evolution of the interface 
between two phases of a substance undergoing a phase transition. Historically, the study of these problems goes back to the eponymous physicist %Josef Stefan (Jo\v{z}ef \v{S}tefan) 
\cite{stefan1891theorie} studying the
growth of ice, and to %Lam\'e and Clapeyron 
\cite{lame1831memoire},
studying the formation of the earth's crust. The classical one-dimensional supercooled Stefan problem
is a simple model for the freezing of a supercooled liquid on the semi-infinite strip $[0,\infty)$. It can be formulated using the set of equations
\begin{subequations}
\label{eq:SCSP}
\begin{align}
\label{eq:SCSP_heattransport}
\partial_t u &= \frac{1}{2}\partial_{xx}u, & &\L_t < x < \infty,~\quad t > 0, \\
\label{eq:SCSP_isothermal}
u(t,\L_t) &= 0, & &t>0,\\
\label{eq:SCSP_stefancondition}
\frac{\alpha}{2}\partial_{x}u(t,\L_t) &= -\dot{\L}_t, & &t>0,\\  
\label{eq:SCSP_initial}
u(0,x) &= - f(x), & &x>0.
\end{align}
\end{subequations}
Here, we interpret $t,x$ and $u=u(t,x)$ as %nondimensional
 time, position and temperature, respectively. The freezing point is at $u=0$, and the freezing front (the interface between solid and liquid) at time $t$ is located at position $x = \L_t$. We assume that $\alpha > 0$ is a known constant and that $f$ is a known probability density function. Equation \eqref{eq:SCSP_initial} then implies that the liquid is initially below or at its freezing point - which is precisely the reason why we refer to the problem as \emph{supercooled}. 
Equation \eqref{eq:SCSP_heattransport} describes the heat transport
in the fluid phase. Condition \eqref{eq:SCSP_isothermal} asserts that the phase change is isothermal, and condition \eqref{eq:SCSP_stefancondition} is a so-called \emph{Stefan condition}, 
which balances the discontinuity in heat flux across the freezing
front with the release of latent heat during freezing. 
The set of equations \eqref{eq:SCSP} describes a \emph{one-phase} model, which means that we do not account for heat transport in the solid phase, which amounts to assuming that the temperature in the solid is constant and equal to $u = 0$. 
Solving the supercooled Stefan problem then amounts to finding functions $u$ and $\L$ such that \eqref{eq:SCSP} is satisfied.

It is well-known that for certain initial conditions, problem \eqref{eq:SCSP} exhibits blow-ups in finite time (see \cite{sherman1970general}).
The classical remedy for this issue is a modification of the
boundary condition, see \cite{dewynne1992survey} for a survey and \cite{baker2020zero} for recent developments.
We follow the approach introduced in \cite{delarue2019global}, which
allows us to globally define solutions to \eqref{eq:SCSP} in the 
presence of blow-ups by virtue of a probabilistic reformulation.

\subsection{Probabilistic reformulation}

Following \cite{delarue2019global}, we consider the probabilistic reformulation of the supercooled Stefan problem \eqref{eq:SCSP}, given by the following McKean--Vlasov equation
\begin{equation}\label{eq:mckeanproblem}
\left\{
\begin{aligned}
X_t &= X_{0-} + B_t - \L_t \\ 
\tau &= \inf\{t \geq 0: X_t \leq 0 \} \\
\Lambda_t &=\alpha\P{\tau \leq t},
\end{aligned}\right.
\end{equation}
where the initial condition $X_{0-}$ is supported in $[0,\infty)$, and $B$ is an independent Brownian motion started at 0. (The assumption about the support of $X_{0-}$ is not needed, but natural for the applications we have in mind). A solution to this equation is a 
triple $(X,\tau,\L)$, where $\L: [0, \infty) \to [0,\alpha]$ is a
deterministic, (not necessarily strictly-) increasing c\`adl\`ag function  such that \eqref{eq:mckeanproblem} holds for any Brownian motion $B$, and $X$ and $\tau$ are the resulting process and stopping time, respectively. 

We heuristically motivate the connection between \eqref{eq:SCSP} and \eqref{eq:mckeanproblem} by means of a formal calculation. Suppose that $(X,\tau, \L)$ solves \eqref{eq:mckeanproblem} for some continuously differentiable loss function $\Lambda$, let $\varphi \in C^2$ be a test function with $\varphi(0)=0$ and let 
$X_{0-}$ admit the density $f$. Applying It\^{o}'s formula and taking expectations yields
\begin{align*}
\E{\varphi(X_t) \ind{[\tau > t]}} = \E{\varphi(X_{0-})} + \frac{1}{2}\int_{0}^{t}\E{\varphi''(X_s) \ind{[\tau > s]}}~\mathrm{d}s
-\int_{0}^{t}\E{\varphi'(X_s) \ind{[\tau > s]}}~\mathrm{d}\L_s.
\end{align*}
Denoting the subdensity of $X_{t}\ind{[\tau > t]}$ on $(0,\infty)$ by $p(t,\cdot)$,
integrating by parts yields
$$
\partial_t p = \frac{1}{2} \partial_{xx}p + 
\dot{\L}_t\partial_{x}p, \quad p(0,\cdot) = f, \quad p(\cdot,0) =0.
$$
Taking the derivative with respect to time of the equation ${\L_t = \alpha(1- \int_{0}^{\infty} p(t,x)~\mathrm{d}x})$, we see that
\begin{align*}
\dot{\L}_t = -\alpha\int_{0}^{\infty}\partial_{t}p(t,x)~\mathrm{d}x
= -\alpha\int_{0}^{\infty} \frac{1}{2} \partial_{xx}p(t,x)~\mathrm{d}x -\alpha\int_{0}^{\infty}
\dot{\L}_t\partial_{x}p(t,x)~\mathrm{d}x = \frac{\alpha}{2} \partial_{x} p(t,0).
\end{align*}
Therefore, setting $u(t,x):= -p(t,x-\L_t)$, we (formally) obtain a solution to \eqref{eq:SCSP}. As \cite[Theorem 1.1]{delarue2019global} shows, this argument can be made rigorous for so-called \emph{physical solutions} to the McKean--Vlasov problem \eqref{eq:mckeanproblem} in between jump times.

Apart from the physical significance of this equation, systemic risk and neuro-science applications
have recently sparked considerable interest in the
McKean--Vlasov problem \eqref{eq:mckeanproblem} and variants thereof, see e.g.~\cite {delarue2015particle, nadtochiy2019particle, hambly2019mckean, delarue2019global, ledger2020uniqueness}. 
The existence question could be clarified in \cite{delarue2015particle}, where (for a slightly different equation) it is shown that global solutions to \eqref{eq:mckeanproblem} 
can be obtained as limit points of the following particle system
\begin{equation}\label{eq:particleN}
\left\{\begin{aligned}
X_{t}^{i,N} &= X_{0-}^{i} + B_{t}^{i} - \Lambda_t^N \\
\tau_{i,N} &= \inf\{t \geq 0: X_{t}^{i,N} \leq 0\} \\
\Lambda_{t}^N &= \frac{\alpha}{N}\sum_{i=1}^{N} \ind{\left[\tau_{i,N}\leq t\right]},
\end{aligned}\right.
\end{equation}
with $N \in \N$ and $i=1,\dots,N$.
Here, the initial conditions are iid with the same law as $X_{0-}$ in \eqref{eq:mckeanproblem}, and  $B^i$ are independent Brownian motions independent of $(X_{0-}^{i})_{i=1}^{N}$. A solution to this equation is a 
triple $(X^N,\tau_N,\Lambda^N)$, where $\Lambda^N$ is a $[0,\alpha]$-valued, increasing, c\`adl\`ag process such that \eqref{eq:mckeanproblem} holds for the Brownian motions $(B^i)_{i=1}^{N}$, and $X^N$ and $\tau_N$ are the resulting %vector of $
$N$-dimensional processes and stopping times, respectively.

\subsection{Connections to systemic risk}

From the particle system \eqref{eq:particleN} the connection to systemic risk in finance becomes apparent. Indeed, consider $N$ banks and assume that $X^{i,N}$ stands for the evolution of bank $i$'s equity value (or rather the distance-to-default, see \cite{feinstein2021dynamic} and the introduction of \cite{hambly2019mckean} regarding possible economic interpretations). The dynamics described by \eqref{eq:particleN} 
then correspond to a perfectly homogeneous lending network, where every bank lends a quantity of $\alpha/N$ to every other bank in 
the system. Whenever one $X^{i,N}$ hits $0$, then -- due to the exposure to all other banks -- a contagion effect occurs. It results in a loss of $\alpha/N$ for all other banks, which in turn can cause further defaults leading to default cascades. Such default cascades can trigger so-called \emph{systemic events,} whose definition, 
introduced in \cite{nadtochiy2019particle}, is based on the limiting McKean--Vlasov equation \eqref{eq:mckeanproblem}. Indeed, for  a solution $(X,\tau,\L)$ to \eqref{eq:mckeanproblem},  the function $\L$ can be interpreted as the aggregate loss of a typical bank in a large banking system caused by defaults from other banks. Mathematically, a \emph{systemic event} is then defined as a jump-discontinuity of $\Lambda$, meaning that 
a considerable fraction of banks defaults in an instant. For more on the relevance of models such as \eqref{eq:mckeanproblem} to the topic of systemic risk, we refer the reader to \cite{hambly2019spde}. 

A natural question at this point is under which conditions jump-discontinuities of $\L$ occur. %Intriguingly, 
Not surprisingly, the smoothness of $\L$ depends on the tuple $(X_{0-},\alpha).$ A remarkably 
short argument \cite[Theorem 1.1]{hambly2019mckean} shows that for any solution $(X,\tau,\L)$ to the McKean--Vlasov problem \eqref{eq:mckeanproblem}, the function $\L$ must
be discontinuous whenever $2\E{X_{0-}} < \alpha.$ Conversely, in the weak feedback regime (if the feedback parameter $\alpha$ is ``small'' relative to the the initial condition), global uniqueness and continuity of $\L$ is proved in \cite{ledger2020uniqueness}. A similar result is obtained in \cite{delarue2015global} for deterministic initial conditions. 

\subsection{Solution concepts for the McKean--Vlasov problem} \label{sec:solutionconcepts}
The preceding definition of a systemic event only makes sense when a suitable \emph{propagation of chaos-result} for the sequence of empirical measures  that corresponds to the particle system \eqref{eq:particleN} can be established.
By the results of \cite{delarue2015particle}, this holds true provided that there is uniqueness among the limit points. But this is exactly (one of) the crucial and still unresolved issue(s): it is open whether uniqueness of limit points of the particle system and uniqueness of solutions to \eqref{eq:mckeanproblem} hold within the class of so-called \emph{physical solutions} for general initial conditions. Following \cite{delarue2015particle}, we call a solution
$(X,\tau,\L)$ to \eqref{eq:mckeanproblem} \emph{physical}, if 
\begin{align}\label{eqdef:physicaljump}
\Delta \L_t = \alpha\inf\{x > 0 \colon \P{\tau \geq t, X_{t-} \in [0,\alpha x]} < x \}, \quad t \geq 0.
\end{align}
Proposition 1.2 in \cite{hambly2019mckean} reveals that for each instant in time, \eqref{eqdef:physicaljump} amounts to choosing the smallest possible jump size of $\L$ that allows for a \cadlag solution  of \eqref{eq:mckeanproblem}. In particular, solutions whose loss function $\L$ is continuous are physical.

There is an analogue of the notion of physical solution for the particle system; we provide more details in Section~\ref{sec:physical_solutions}. In the particle system, restricting to physical solutions
is economically and physically meaningful, and allows to preclude economically elusive solutions and to conclude uniqueness of \eqref{eq:particleN} (for every fixed $N$) 
among the set of physical solutions. Moreover, it turns out  that the physical solution coincides with  another meaningful type of solution, namely the \emph{minimal solution} (see Lemma~\ref{lemma:physicalminimal}). 

In the current paper we shall focus on this solution concept and illustrate that this is in several respects the right way to look at the problem.
We call a solution $(\Xmin^N,\taumin_N,\Linfmin^N)$ to \eqref{eq:particleN} \emph{minimal} if, almost surely, for any other solution $(X^N,\tau_N,\L^N)$ to \eqref{eq:particleN} coupled to the same Brownian motions $(B^i)_{i=1}^{N}$, we have 
\begin{equation}\label{eq:minimalNdef}
\Linfmin_t^N \leq \L_t^N, \quad t \geq 0.
\end{equation} 
In an analogous manner, we call a solution $(\Xmin,\taumin,\Linfmin)$ to the McKean--Vlasov problem \eqref{eq:mckeanproblem} \emph{minimal},
if for every solution $(X,\tau, \L)$ to \eqref{eq:mckeanproblem} we have
\begin{equation}\label{eq:minimaldef}
\Linfmin_t \leq \L_t, \quad t \geq 0.
\end{equation}
Note that this condition is deterministic as $\Linfmin$ and $\L$ are necessarily deterministic.

Significant progress regarding the question of uniqueness of physical solutions has been made recently, 
going beyond the weak feedback regime. In \cite{delarue2019global}, uniqueness of the physical solution to the McKean--Vlasov problem \eqref{eq:mckeanproblem} is established for initial conditions with a bounded and non-oscillatory density for any $\alpha > 0$.  However, as mentioned above uniqueness within the class of physical solutions for general $X_{0-}$ and the question whether the (unique) minimal solution coincides with (one of) the physical solution(s), are open problems.

\subsection{Main results}

\begin{figure}
\centering
\includegraphics[width=0.45\textwidth,height=0.25\textheight]{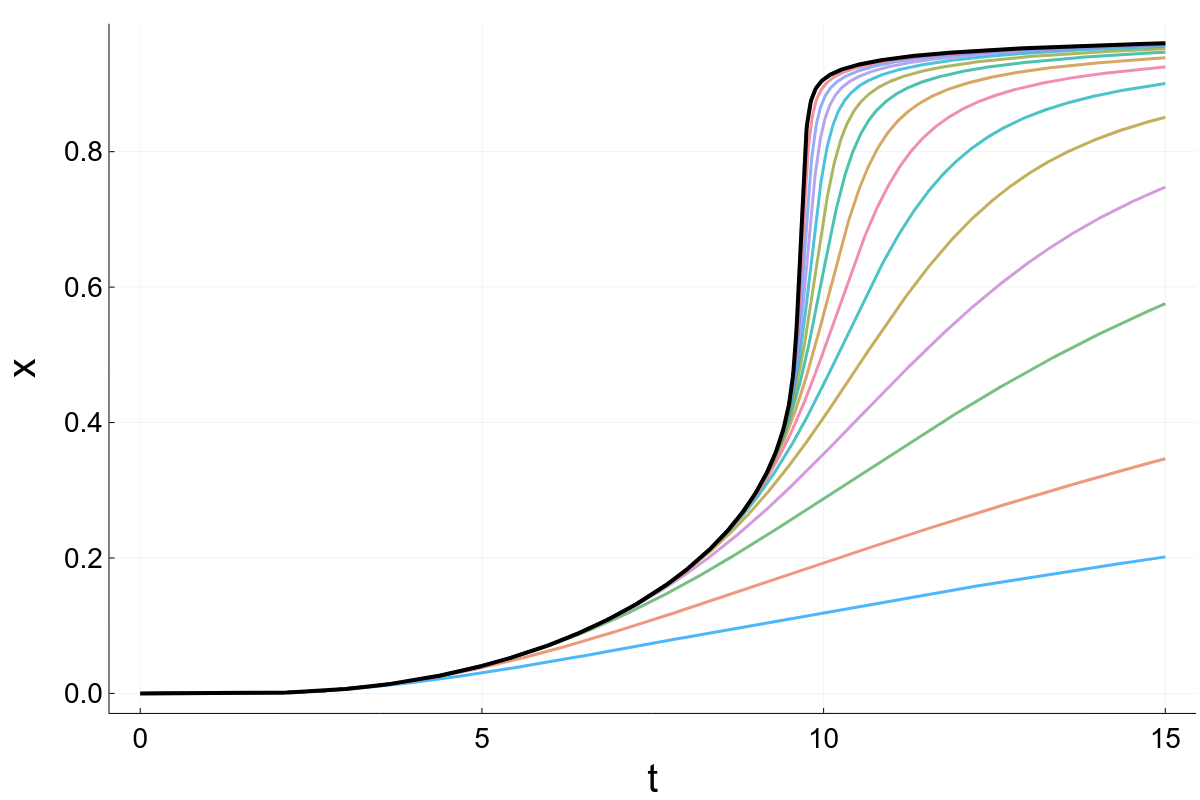}
\includegraphics[width=0.5\textwidth,height=0.25\textheight]{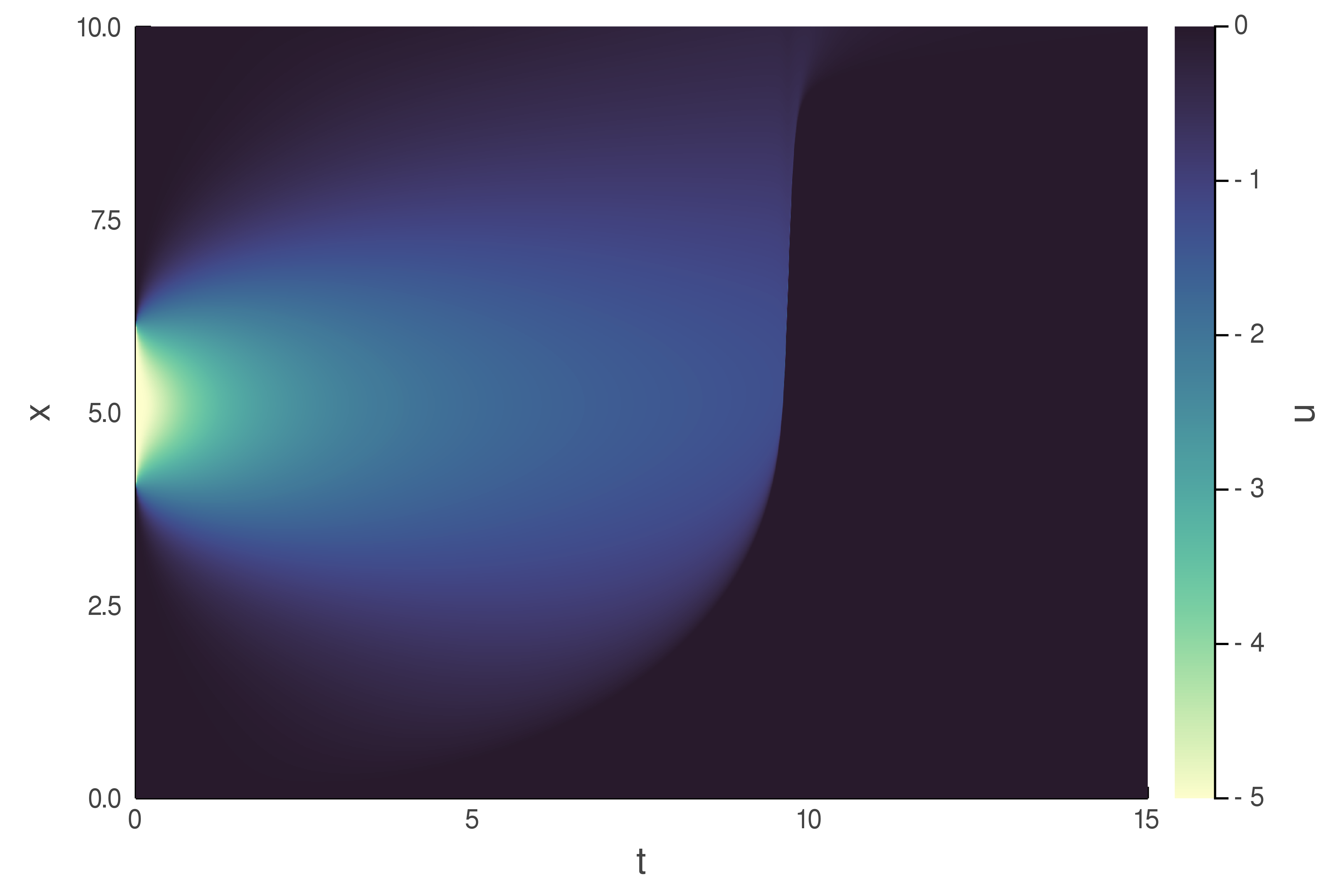}
\caption{The left-hand side shows the iterates $\Gamma^{(k)}[0]$. The right-hand side shows the solution $u(t,x)$ to the supercooled Stefan problem. The parameters were $\alpha=10$, and $X_{0-}$ was chosen to be uniformly distributed on (4,6).}
\label{fig:iterationandheatmap}
\end{figure}

One of the main contributions of the current paper is to answer the latter question raised by \cite{delarue2019global}
with ``yes'':\footnote{Note that the conjecture was stated on p.55 in v1 of the arXiv preprint of \cite{delarue2019global} and has been removed from the updated version.} \emph{the minimal solution is a physical solution} (see Theorem~\ref{thm:minimalisphysical}). The key for proving this is a propagation of chaos-result for a perturbed particle system. Indeed, consider  \eqref{eq:particleN} with an initial condition that is slightly larger, namely $X_{0-}^i+N^{-\gamma}$, for $\gamma \in (0, \frac{1}{2})$. 
Then we can show that the empirical distribution of the minimal (and thus physical) solution to the perturbed particle system converges in probability to the law of the minimal solution to \eqref{eq:mckeanproblem}. This is what we call \emph{perturbed propagation of minimality} (see Theorem~\ref{thm:pertpropagation}). Combining this with the (known) fact that physical solutions of the (perturbed) particle system converge to physical solutions of the McKean--Vlasov equation, we can thus conclude.

% In the case where the physical solution happens to be unique, the minimal and physical solution coincide and
% \emph{true propagation of minimality} holds true (see Theorem~\ref{cor:propmin}). By the latter we mean
%  propagation of chaos of the unperturbed particle system \eqref{eq:particleN} to 
% the minimal solution to \eqref{eq:mckeanproblem}. This follows from the known fact that 
% any limit point (of the laws) of the  empirical distribution of the minimal (=physical) solution to \eqref{eq:particleN}  corresponds to the law of a physical solution of \eqref{eq:mckeanproblem}. By  uniqueness of the latter and the fact that the minimal solution is physical, we can thus conclude. 
% The general case, in particular for deterministic initial conditions, is still open. 

% In this paper we also make a step towards answering this question. Indeed, for any fixed $\alpha > 0$ and for Lebesgue-almost every $x >0$, we can prove true propagation of minimality and uniqueness of the limit points of the  particle system with initial condition $X^i_{0-}-x$ (see Theorem~\ref{thm:almostpropagation}). This does not necessarily imply uniqueness of the physical solution to the McKean--Vlasov equation but shows again that \emph{the minimal solution is the right solution concept.} 

Part of the motivation for studying the concept of minimal solutions was the question whether one could more easily prove propagation of chaos using this concept, as the methods employed in \cite{delarue2019global} are not easily applicable to more general driving processes and initial conditions. While we believe that the methods presented here can be generalized in many ways, we only prove propagation of chaos with a perturbed initial condition (see Theorem \ref{thm:pertpropagation}), and still rely on the uniqueness result of \cite{delarue2019global} to obtain ``propagation of minimality'' for the unperturbed system. However, if the minimal solution of the McKean--Vlasov equation were stable under additive perturbations of the initial condition, our methods would yield full propagation of chaos. We formulate this as a conjecture at the end of the paper.

Our results also allow to exploit iteration \eqref{eq:iteration} for numerical purposes (for more on numerical methods for this problem, see \cite{kaushansky2019simulation}, \cite{kaushansky2020convergence} and \cite{lipton2019semi}). 
Applying a time-discretziation and iteratively solving first-passage time problems as in \cite[p.230]{peskir2006optimal}, one can calculate the minimal (and thus physical) solution numerically. Figure \ref{fig:iterationandheatmap} illustrates an implementation of this method. Rigorously establishing convergence of this scheme is however nontrivial and beyond the scope of this article.

In the following let us summarize the main contributions of the current article. 
\begin{itemize}
% \item Global continuity of the operator $\Gamma$ defined in \eqref{eqdef:Gamma} (Proposition \ref{thm:Gammacont}), which as a consequence
% yields representations of the minimal solution to the McKean--Vlasov problem \eqref{eq:mckeanproblem} as fixed-point iterations (Proposition~\ref{lem:miniterationinfty}). We also obtain similar results for the particle system (Lemma~\ref{lem:miniteration}).
\item A general tightness result in the $M_1$ topology for stochastic processes that can be decomposed into a continuous  and  an increasing \cadlag process (Theorem~\ref{thm:Dinftytightness}).
\item (Perturbed) Propagation of minimality (Theorem~\ref{thm:pertpropagation} and Theorem~\ref{cor:propmin}) via the trilogy of arguments consisting of
tightness with respect to Skorokhod's $M_1$ topology (Corollary~\ref{cor:tightness}), convergence of solutions (Proposition~\ref{thm:finitedimconvergence}) and identification of the limit (Section~\ref{sec:identification}).
\item The physical nature of the minimal solution and hence global existence of physical solutions whenever the initial condition is integrable (Theorem~\ref{thm:minimalisphysical}).
% \item Uniqueness of the limit points of the particle system for almost all deterministic initial conditions (Theorem~\ref{thm:almostpropagation}).
\end{itemize}

The remainder of the article is structured as follows.
 In Section \ref{sec:minimalmckean}, we construct the minimal solution through a fixed-point iteration. In Section \ref{sec:solutionsparticle}, we discuss the notions of physical and minimal solutions for the particle system and show that they are equivalent. Section \ref{sec:tightness} is dedicated to proving tightness of  the 
empirical measures associated to the particle system, in Section \ref{sec:convergence} we show that limit points of such empirical measures correspond to solutions of the McKean--Vlasov problem in a certain sense.
Finally, in Section \ref{sec:identification}, we prove propagation of minimality under various perturbations and deduce that the minimal solution of the McKean--Vlasov problem is physical whenever the initial condition is integrable.

\section{The minimal solution of the McKean--Vlasov problem}
\label{sec:minimalmckean}

In this section, we follow the same strategy employed in \cite{hambly2019mckean}, \cite{nadtochiy2020mean} and \cite{delarue2015global}, which is to 
decouple system \eqref{eq:mckeanproblem} by rewriting it as a fixed-point problem for a certain operator. In \cite{delarue2015global}, global well-posedness
results are shown in the weak feedback regime (i.e., for small $\alpha$) for deterministic initial conditions. In the work \cite{nadtochiy2020mean}  more general networks are considered and a version of the Schauder-Tychonoff theorem is proved for the Skorokhod $M_1$-topology, however these results do not prove the existence of a minimal or physical solution. In \cite{hambly2019mckean}, existence (and uniqueness) of solutions 
is shown up to the time of the first discontinuity under some regularity assumptions on the initial condition. The results presented in the current article
are global in time and show the global existence of minimal solutions without any restrictions on the initial condition $X_{0-}$ and the feedback 
parameter $\alpha$. Note that global existence of minimal solutions was first shown in a preprint version of \cite{delarue2019global}, 
however this result was removed from subsequent versions of said paper and we follow a somewhat different proof strategy here.

Define the operator $\Gamma$ for a \cadlag function $\ell$ as
\begin{equation}\label{eqdef:Gamma}
\left\{
\begin{aligned}
X_t^{\ell} &= X_{0-} + B_t - \alpha \ell_t \\ 
\tau^{\ell} &= \inf\{t \geq 0: X_t^{\ell} \leq 0 \} \\
\Gamma[\ell]_t &= \P{\tau^{\ell} \leq t}.
\end{aligned}\right.
\end{equation}
Note here that $(X^\ell,\tau^\ell,\alpha\ell)$ solves \eqref{eq:mckeanproblem} if and only if $\ell$ is a fixed-point of  $\Gamma$.

 It is straightforward to see that $\Gamma$ is monotone in the sense that
\begin{equation}\label{eq:GammaMonotone}
\ell^1_t \leq \ell^2_t, \quad t \geq 0 \quad \implies \quad \Gamma[\ell^1]_t \leq \Gamma[\ell^2]_t, \quad t \geq 0.
\end{equation} 
We are now interested in finding a space on which the operator $\Gamma$ stabilizes. Let $\overline{\R}$ denote the extended real line (i.e., the two-point compactification of $\R$). As a consequence of $\overline{\R}$ being a compact metric space and $[0,\infty]$ being a closed subset thereof, the space of probability measures on $[0,\infty]$, denoted as $\mathcal{P}([0,\infty])$, endowed with the topology of weak convergence of probability measures is a compact Polish space (for more details see e.g.~\cite{klenke2013probability})\footnote{Note that here we use the language of probabilists, this mode of convergence corresponds to the weak*-convergence of measures in the language of functional analysis.}. Set
\begin{equation}\label{eq:DistFunctiondef}
\DistFunctions := \{ \ell \colon \overline{\R} \rightarrow [0,1]~|~  \ell \text{ \cadlag and increasing, }~\ell_{0-} = 0,~\ell_{\infty} = 1\},
\end{equation}
then we may identify the elements of $\DistFunctions$ with distribution functions of measures in $\mathcal{P}([0,\infty])$ via the map $\ell \mapsto \mu_{\ell}$, where we set 
$\mu_{\ell}([0,t]) := \ell_t$ for $t\geq 0.$
Convergence in $\DistFunctions$ is then equivalent to weak convergence of probability measures in $\mathcal{P}([0,\infty])$. In particular, we have that $\ell^n \to \ell$ in $\DistFunctions$ if and only if $\mu_{\ell^n} \to \mu_{\ell}$ weakly in $\mathcal{P}([0,\infty])$, if and only if $\ell_{t}^n \to \ell_t$ for all $t \in [0,\infty]$ that are continuity points of $\ell$. Equipped with this topology, $\DistFunctions$ is a compact Polish space. We now show that $\Gamma$ is a continuous operator on $\DistFunctions$.
\begin{proposition}\label{thm:Gammacont}
The operator $\Gamma: \DistFunctions \to \DistFunctions$ is continuous.
\end{proposition}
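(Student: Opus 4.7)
The plan is to couple every instance of \eqref{eqdef:Gamma} on a single probability space with a fixed Brownian motion $B$ and a fixed initial value $X_{0-}$. Setting $Y_s:=X_{0-}+B_s$, we have $\tau^\ell=\inf\{s\ge 0\colon Y_s\le\alpha\ell_s\}$ and analogously for each $\tau^{\ell^n}$. Recall that $\ell^n\to\ell$ in $\DistFunctions$ is equivalent to $\ell^n_s\to\ell_s$ at every continuity point of $\ell$, and that the set of such continuity points is co-countable and dense. I would aim to establish, almost surely and for every $t\ge 0$,
\begin{equation*}
\ind{\tau^\ell<t}\le\liminf_n\ind{\tau^{\ell^n}\le t}\le\limsup_n\ind{\tau^{\ell^n}\le t}\le\ind{\tau^\ell\le t}.
\end{equation*}
Taking expectations at any continuity point $t$ of $\Gamma[\ell]$ (equivalently $\P{\tau^\ell=t}=0$) then squeezes $\Gamma[\ell^n]_t\to\Gamma[\ell]_t$, which is precisely convergence in $\DistFunctions$.

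For the upper bound, assume $\tau^\ell>t$. Since $Y$ is continuous and $\ell$ non-decreasing c\`adl\`ag, the function $s\mapsto Y_s-\alpha\ell_s$ has left limits dominating its values, so its infimum on $[0,t]$ is attained and therefore strictly positive. If a subsequence had $\tau^{\ell^{n_k}}\le t$, pick $s_{n_k}\in[0,t]$ with $Y_{s_{n_k}}\le\alpha\ell^{n_k}_{s_{n_k}}$ and extract $s_{n_k}\to s^*\in[0,t]$. For every continuity point $s''>s^*$ of $\ell$, monotonicity of each $\ell^{n_k}$ together with $\ell^{n_k}_{s''}\to\ell_{s''}$ yields $\limsup_k\ell^{n_k}_{s_{n_k}}\le\ell_{s''}$; letting $s''\downarrow s^*$ along continuity points and using right-continuity of $\ell$ gives $\limsup_k\alpha\ell^{n_k}_{s_{n_k}}\le\alpha\ell_{s^*}$. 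Combined with $Y_{s_{n_k}}\to Y_{s^*}$, this forces $Y_{s^*}\le\alpha\ell_{s^*}$, hence $\tau^\ell\le s^*\le t$, contradicting $\tau^\ell>t$.

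For the lower bound, assume $\tau^\ell<t$. It suffices to exhibit a continuity point $s\in(\tau^\ell,t)$ of $\ell$ with $Y_s<\alpha\ell_s$ strictly, for then $\alpha\ell^n_s\to\alpha\ell_s>Y_s$ forces $\tau^{\ell^n}\le s<t$ eventually. If $Y_{\tau^\ell}<\alpha\ell_{\tau^\ell}$ strictly, continuity of $Y$ and right-continuity of $\ell$ deliver a right-neighbourhood of $\tau^\ell$ on which the strict inequality persists, and since $\ell$ has countably many jumps we can choose $s$ inside this neighbourhood to be a continuity point of $\ell$. Otherwise $Y_{\tau^\ell}=\alpha\ell_{\tau^\ell}$; applying the strong Markov property at the stopping time $\tau^\ell$, the process $\tilde B_u:=B_{\tau^\ell+u}-B_{\tau^\ell}$ is an independent Brownian motion and therefore almost surely takes strictly negative values on a set of positive measure in every right-neighbourhood of $0$. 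Writing $Y_{\tau^\ell+u}-\alpha\ell_{\tau^\ell+u}=\tilde B_u-\alpha(\ell_{\tau^\ell+u}-\ell_{\tau^\ell})$ and noting that the second term vanishes as $u\downarrow 0$, this set of $u$ intersects the co-countable set of continuity points of $\ell$, giving the desired $s$.

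I expect the genuinely delicate step to be the strict-crossing argument when $\tau^\ell$ coincides with a jump of $\ell$, where the strong Markov fluctuation argument must be combined with the density of continuity points to deliver a genuine continuity point of $\ell$ at which $Y$ lies strictly below the barrier. Everything else is careful but essentially mechanical book-keeping built on the characterisation of $\DistFunctions$-convergence as pointwise convergence at continuity points of the limit, the monotonicity and right-continuity of the barriers, and the continuity of $Y$.
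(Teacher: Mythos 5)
Your argument is correct, and it takes a genuinely different route from the paper's. The paper works at the level of distributions: the inequality $\limsup_n\Gamma[\ell^n]_t\leq\Gamma[\ell]_t$ comes from Portmanteau and the reverse Fatou lemma, while the reverse inequality is obtained by bounding $(\Gamma[\ell]_t-\Gamma[\ell^n]_t)^+\leq\P{\tau^{\ell^n}>t,\,\tau^\ell\leq t}$, conditioning on $\tau^\ell=s$, splitting $d\Gamma[\ell]$ into its continuous and atomic parts, and controlling the continuous part quantitatively via the reflection principle (the integrand $2\Phi(\alpha(\ell_s-\ell^n_s)/\sqrt{t-s})-1$) and the atomic part via the fact that $\P{X_{0-}+B_s=\alpha\ell_s}=0$. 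You instead prove a pathwise, almost sure sandwich of indicator functions on a single coupled space and only then take expectations at continuity points of $\Gamma[\ell]$. Your upper bound is the pathwise shadow of the paper's reverse Fatou step; your lower bound replaces the reflection-principle computation by the observation that, by the strong Markov property, the path a.s. drops strictly below the barrier immediately after $\tau^\ell$ (note that since $\ell$ is non-decreasing, $Y_{\tau^\ell+u}-\alpha\ell_{\tau^\ell+u}\leq\tilde B_u$, so you do not even need the remark that the increment of $\ell$ vanishes as $u\downarrow 0$). This is precisely the ``crossing property'' that the paper introduces only later, in Section 5, to identify limits of the particle system, so your proof unifies Proposition~\ref{thm:Gammacont} with that circle of ideas and is arguably more elementary. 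What it gives up is the quantitative flavour of the paper's estimate, which exhibits an explicit modulus in terms of $\ell_s-\ell^n_s$; what it gains is brevity and the avoidance of conditional-probability manipulations. One cosmetic omission: convergence in $\DistFunctions$ also requires the value at $t=\infty$, but since Brownian motion hits every level in finite time one has $\Gamma[\ell^n]_\infty=\Gamma[\ell]_\infty=1$ identically, so this is immediate.
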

\begin{proof} 
Let $\ell^n \to \ell$ in $\DistFunctions$. We show that $\Gamma[\ell^n]_t \to \Gamma[\ell]_t$ for all $t \in [0,\infty]$ that are continuity points of $\Gamma[\ell]$.
% By the Portmanteau theorem and the reverse Fatou lemma, we have for any $t \geq 0$
%
% \begin{align*}
% \limsup_{n\to\infty}\P{\tau^{\ell^n} \leq t} &= \limsup_{n\to\infty}\P{\exists s \in [0,t] : X_{0-} + B_s \leq \alpha \ell_s^n}  \\
% &\leq \mathbb P\big(\exists s \in [0,t] : X_{0-} + B_s \leq \limsup_{n\to\infty} \alpha \ell_s^n\big)
%  \\
% &\leq \P{\exists s \in [0,t] : X_{0-} + B_s \leq \alpha \ell_s} \\
% &= \P{\tau^\ell \leq t}.
% \end{align*}
%
We begin by proving
\begin{align}\label{eq:limsupgamma}
\limsup_{n\to\infty} \Gamma[\ell^n]_t \leq \Gamma[\ell]_t, \quad t \geq 0. 
\end{align}
To prove \eqref{eq:limsupgamma}, note that since $\Gamma[\ell]_t = \P{\exists s \in [0,t] : X_{0-} + B_s \leq \alpha \ell_s}$, by the reverse Fatou lemma it is enough to show that
\begin{align}\label{eq:indinequality}
\limsup_{n\to\infty} \ind{\{\exists s \in [0,t] : X_{0-} + B_s \leq \alpha \ell_s^n\}} \leq \ind{\{\exists s \in [0,t] : X_{0-} + B_s \leq \alpha \ell_s\}}
\end{align}
holds almost surely. If the left-hand side of \eqref{eq:indinequality} is equal to zero there is nothing to prove, so suppose that $\omega \in \Omega$ is such that the left-hand side of \eqref{eq:indinequality} is equal to one. Then, after passing to subsequences if necessary, we can find a sequence $s_n \to s$ with $s \in [0,t]$ such that $X_{0-}(\omega) + B_{s_n}(\omega) \leq \alpha \ell_{s_n}^n$. Let $s' > s$ be a continuity point of $\ell$, then since the $\ell^n$ are increasing we find $X_{0-}(\omega)+B_{s}(\omega) \leq \alpha\limsup_{n\to\infty} \ell_{s^n}^{n} \leq \limsup_{n\to\infty} \alpha\ell_{s'}^{n} = \alpha\ell_{s'}$. Letting $s'$ tend to $s$ we obtain $X_{0-}(\omega) + B_{s}(\omega) \leq \alpha \ell_{s}$, which yields \eqref{eq:indinequality} and hence \eqref{eq:limsupgamma}.

Let $t = 0$. If zero is not a point of continuity for $\Gamma[\ell]$, there is nothing to prove. If zero is a point of continuity, then $\Gamma[\ell]_0 = 0$. Inequality \eqref{eq:limsupgamma} shows that $\Gamma[\ell^n]_0$ goes to $0$ as $n$ goes to infinity. 

Now let $t>0$ be a point of continuity for $\Gamma[\ell]$. We claim that
\begin{equation}\label{eqn8}
\lim_{n\to\infty} (\Gamma[\ell]_t - \Gamma[\ell^n]_t)^+ = 0.
\end{equation}
We may write
\begin{align*}
(\Gamma[\ell]_t - \Gamma[\ell^n]_t)^+ %&= \P{\tau^\ell \leq t} - \P{\tau^{\ell^n} \leq t}  
\leq \P{\tau^{\ell^n} > t, \tau^\ell \leq t } 
= \int_{[0,t]} \P{\tau^{\ell^n} > t ~|~ \tau^\ell = s} d\Gamma[\ell]_s.
\end{align*}
We now split up the integrand in its continuous and jump part, writing $\Gamma[\ell]_s^c$ for the continuous part. Then, following the proof of Proposition 3.1 in \cite{hambly2019mckean}, we find 
\begin{align}\label{eq:integralcontinuous}
&\int_{0}^{t} \P{\tau^{\ell^n} > t ~|~ \tau^\ell = s} d\Gamma[\ell]_s^c \leq  
\int_{0}^{t} \left(2\Phi\left( \alpha\frac{\ell_s - \ell_s^n}{\sqrt{t-s}} \right) - 1 \right) d\Gamma[\ell]_s^c,
\end{align}
where $\Phi$ denotes the cumulative distribution function of a 
standard normal random variable. Because the set of discontinuity times of $\ell$ is at most countable, the integrand in \eqref{eq:integralcontinuous} converges $\Gamma[\ell]^c$-almost everywhere to $0$. Consequently, the integral in \eqref{eq:integralcontinuous} vanishes as $n\to\infty$ by the dominated convergence theorem. 
 
Regarding now the integral with respect to the jump part of $\Gamma[\ell]$, we obtain
\begin{align}\label{eq:sumdiscontinuous}
\int_{0}^{t} \P{\tau^{\ell^n} > t ~|~ \tau^\ell = s} d(\Gamma[\ell] -\Gamma[\ell]^c)_s \nonumber 
=&\sum_{s \leq t} \P{\tau^{\ell^n} > t ~|~ \tau^\ell = s} \Delta \Gamma[\ell]_s \nonumber \\
=& \sum_{s < t} \P{\tau^{\ell^n} > t ~|~ \tau^\ell = s}\P{\tau^\ell = s} \nonumber \\
=&\sum_{s < t} \P{\tau^{\ell^n} > t, \tau^\ell = s},
\end{align}
where we may take the sum over $s < t$ because $t$ was assumed to be a continuity point of $\Gamma[\ell]$.
Again, by the reverse Fatou lemma and the Portmanteau theorem, we find
\begin{align*}
\limsup_{n\to\infty} \P{\tau^{\ell^n} > t , \tau^\ell = s}  &\leq 
\P{\forall \varepsilon \in (0,t-s): X_{0-}+B_{s+\varepsilon} \geq \alpha \ell_{s+\varepsilon-} , \tau^\ell = s} \\
&\leq \P{X_{0-}+B_{s} \geq \alpha \ell_{s} , \tau^\ell = s} \\
&\leq  \P{X_{0-} + B_s = \alpha \ell_{s}} \\
&= 0.
\end{align*}
By the dominated convergence theorem we find that the sum in \eqref{eq:sumdiscontinuous} converges to zero as $n$ goes to infinity. 
Together with \eqref{eq:integralcontinuous} this yields claim \eqref{eqn8}, which, combined with \eqref{eq:limsupgamma}, proves
$
\lim_{n\to\infty} \Gamma[\ell^n]_t = \Gamma[\ell]_t.
$
\end{proof}

% \begin{remark}
% By definition, $\Gamma[\ell]$ is the solution to the first-passage 
% time problem with lower function $X_{0-} - \alpha\ell$. As such, we may view Proposition~\ref{thm:Gammacont} as a statement on the stability of first-passage time problems. 
% \end{remark}

Using the monotonicity of the operator $\Gamma$, one can iteratively construct the minimal solution to the McKean--Vlasov equation, an idea due to \citep{delarue2019global}.
 Figure~\ref{fig:iterationandheatmap} illustrates this iteration.

 \begin{remark}
Note that we did not define $\Gamma[\ell]_{\infty}$, but by definition, if $\Gamma[\ell] \in \DistFunctions$ should hold, we need to set $\Gamma[\ell]_{\infty} := 1$. We will continue to only define elements of $M$ on $[0,\infty)$, as the value at infinity
necessarily needs to be equal to one. 
In particular, we will often consider the function $t \mapsto 0$ as an element of $M$, and really mean $t \mapsto \ind{\{\infty\}}(t)$ on $[0,\infty]$.
\end{remark}

\begin{proposition}\label{lem:miniterationinfty} For any initial condition $X_{0-}$ and $\alpha > 0$, there is a minimal solution to \eqref{eq:mckeanproblem}, which we denote by $(\Xmin,\Linfmin)$. It holds that
\begin{align}\label{eq:iteration}
\alpha \lim_{k\to\infty} \Gamma^{(k)}[0] = \Linfmin, 
\end{align}
in $\DistFunctions$, where $\Gamma^{(k)}$ denotes the $k$-th iterate of the operator $\Gamma$ as defined in $\eqref{eqdef:Gamma}$.
\end{proposition}
\begin{proof}
By definition, we have that $0 \leq \Gamma[0]$. Using the monotonicity of $\Gamma$, this implies that 
\begin{align*}
\Gamma[0] \leq  \Gamma[\Gamma[0]] = \Gamma^{(2)}[0]
\end{align*}
and a straightforward induction shows that
$
\Gamma^{(k)}[0] \leq \Gamma^{(k+1)}[0]$, for each $k \in \N$.
The sequence $(\Gamma^{(k)}[0]_t)_{k\in \N}$ is therefore increasing and bounded by $1$ for every $t \geq 0$, which implies
that we may define $\tilde{\L}$ to be the pointwise limit
\begin{equation*}
\tilde{\L}_t := \alpha\lim_{k\to\infty} \Gamma^{(k)}[0]_t, \quad t \geq 0.
\end{equation*}
Clearly, $\tilde{\L}$ is increasing with $\tilde{\L}_{0-} = 0$ and $\tilde{\L}_{\infty} = \alpha$, so its \cadlag modification $\Linfmin_t := \tilde{\L}_{t+}$ 
lies in $\alpha\DistFunctions$. By construction, we have that 
$\lim_{k\to\infty}\Gamma^{(k)}[0] = \frac{1}{\alpha}\Linfmin$
in $\DistFunctions$. By continuity of $\Gamma$ on $\DistFunctions$ (Proposition~\ref{thm:Gammacont}), we obtain
\begin{equation*}
\alpha\Gamma\left[\frac{1}{\alpha}\Linfmin\right] = \alpha\Gamma[\lim_{k\to\infty}\Gamma^{(k)}[0]] = \alpha\lim_{k\to\infty}\Gamma^{(k+1)}[0] = \Linfmin, 
\end{equation*}
so $\Linfmin$ solves \eqref{eq:mckeanproblem}. Now suppose that $\L$
is another solution to \eqref{eq:mckeanproblem}. By definition, it holds that $\L \geq 0$, and using the monotonicity of $\Gamma$ this leads to  
\begin{align*}
\alpha\Gamma[0] \leq \alpha\Gamma\left[\frac{1}{\alpha}\L\right] = \L.
\end{align*}
A straightforward induction shows that
$
\alpha\Gamma^{(k)}[0] \leq \L$, for each  $k \in \N$.
If $t$ is a continuity point of $\Linfmin$, this implies
\begin{equation}
\Linfmin_t = \alpha\lim_{k\to\infty}\Gamma^{(k)}[0]_t \leq \L_t,
\end{equation}
which by right-continuity implies $\Linfmin \leq \L$. As $\L$ was an arbitrary solution to \eqref{eq:mckeanproblem}, this proves that $\Linfmin$ is in fact the minimal solution.
\end{proof}
The next example illustrates how the solution to the McKean--Vlasov problem \eqref{eq:mckeanproblem} can fail to be unique.
\begin{example}[Several solutions to the McKean--Vlasov problem]
Let $X_{0-} \equiv 1$ and set $\alpha = 1$. Then, $\overline{\L} \equiv 1$ solves the McKean--Vlasov problem \eqref{eq:mckeanproblem}.
Later on, we will prove (see Theorem~\ref{thm:minimalisphysical}) that the minimal solution is physical if the initial condition is integrable, which implies by definition that
\begin{equation*}
\Linfmin_0 = \Linfmin_0 - \Linfmin_{0-} = \alpha\inf\{x \geq 0: \P{X_{0-} \in [0,x]} < x\} = 0.
\end{equation*}
Therefore, clearly we have $\Linfmin \neq \overline{\L}$, and hence nonuniqueness of the McKean--Vlasov problem \eqref{eq:mckeanproblem}.
\end{example}

\section{Solutions of the particle system}
\label{sec:solutionsparticle}
\subsection{Minimal solutions}
Although it may seem intuitively obvious, we have not shown yet that there is a minimal solution to the particle system \eqref{eq:particleN}.
A natural question at this point is whether the same construction outlined in Lemma~\ref{lem:miniterationinfty} works, and a moment's reflection
shows that it does. Define the operator $\Gamma_N$ via

\begin{equation}\label{eqdef:GammaN}
\left\{\begin{aligned}
X_{t}^{i,N}[\mathsf{L}] &= X_{0-}^{i} + B_{t}^{i} - \alpha \mathsf{L}_t \\
\tau_{i,N}[\mathsf{L}] &= \inf\{t \geq 0: X_{t}^{i,N}[\mathsf{L}] \leq 0\} \\
\Gamma_N[\mathsf{L}]_t &= \frac{1}{N}\sum_{i=1}^{N} \ind{\left[\tau_{i,N}[\mathsf{L}]\leq t\right]}, 
\end{aligned}\right.
\end{equation}
where $\mathsf{L}$ is some \cadlag process. In analogy to \eqref{eq:GammaMonotone}, $\Gamma_N$ is monotone in the sense that 
$$
\mathsf{L}^1_t \leq \mathsf{L}^2_t, \quad t \geq 0 \quad \implies \quad \Gamma[\mathsf{L}^1]_t \leq \Gamma[\mathsf{L}^2]_t, \quad t \geq 0.
$$
 Making use of this montonicity, we readily see by straightforward induction arguments that 
\begin{equation}\label{eq:GammaNiterationprop}
\alpha\Gamma_N^{(k)}[0] \leq \L^N, \quad \Gamma_N^{(k)}[0] \leq \Gamma_N^{(k+1)}[0], \quad k \in \N,
\end{equation}
holds almost surely, where $\L^N$ is any solution to the particle system
and $\Gamma_N^{(k)}$ denotes the $k$-th iterate of $\Gamma_N$. We did not show any suitable continuity properties of $\Gamma_N$ to conclude in the 
same way as in Lemma~\ref{lem:miniterationinfty}, but as we prove in the next lemma, the iteration $(\Gamma_N^{(k)}[0])_{k\in\N}$ is constant
after at most $N$ steps.
\begin{lemma}\label{lem:miniteration}
For $N \in \N$, let $\Gamma_N$ be defined as in \eqref{eqdef:GammaN}.
Then  $\Linfmin^N:=\alpha\Gamma^{(N)}_N [0]$ is the minimal solution to the particle system 
\eqref{eq:particleN} and the error bound
\begin{align}\label{eq:nerrorbound}
\|\alpha\Gamma^{(k)}_N[0] - \Linfmin^N\|_{\infty} \leq  \alpha\frac{(N-k)^+}{N}
\end{align}
holds almost surely. 
\end{lemma}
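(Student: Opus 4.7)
Write $L^{(k)} := \Gamma^{(k)}_N[0]$ and $T_i^{(k)} := \tau_{i,N}[L^{(k-1)}]$. By the monotonicity inequalities \eqref{eq:GammaNiterationprop}, the sequence $L^{(k)}$ is almost surely non-decreasing in $k$ and dominated by any solution $L^N$ of \eqref{eq:particleN}; hence the pointwise limit $\tilde L := \sup_k L^{(k)}$ exists and is dominated by every solution. The plan is to show that this limit is attained after at most $N$ iterations and is a fixed point of $\Gamma_N$; domination will then force $\tilde L = \Lmin^N$, and the error bound will fall out of the argument.

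The heart of the proof is the one-step estimate: whenever $L^{(k)} \neq \tilde L$,
\begin{equation*}
\|\tilde L - L^{(k+1)}\|_\infty \leq \|\tilde L - L^{(k)}\|_\infty - 1/N.
\end{equation*}
Iterating from the trivial bound $\|\tilde L - L^{(0)}\|_\infty \leq 1$ yields the announced bound $(N-k)^+/N$; for $k = N$ this forces $L^{(N)} = \tilde L$, so $\tilde L = \Gamma_N[L^{(N-1)}] = \Gamma_N[\tilde L]$ is a fixed point of $\Gamma_N$, hence a solution, which by the above domination must be the minimal one, so $\Lmin^N = \tilde L = \Gamma^{(N)}_N[0]$.

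To prove the one-step estimate, set $\sigma := \inf\{t : L^{(k)}_t < \tilde L_t\}$. One verifies that $L^{(k)} \equiv \tilde L$ on $[0, \sigma)$ and $L^{(k)}_\sigma < \tilde L_\sigma$, making $\sigma$ a common jump time with $L^\ast := L^{(k)}_{\sigma-} = \tilde L_{\sigma-}$. For particles $i$ with $\tau_{i,N}[\tilde L] < \sigma$, the barrier $L^{(k)}$ agrees with $\tilde L$ on $[0, \tau_{i,N}[\tilde L]]$, so $T_i^{(k+1)} = \tau_{i,N}[\tilde L]$; for particles with $\tau_{i,N}[\tilde L] > \sigma$, monotonicity gives $T_i^{(k+1)} \geq \tau_{i,N}[\tilde L] > \sigma$. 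The only discrepancy at $\sigma$ is therefore carried by the cascade set $C := \{i : \tau_{i,N}[\tilde L] = \sigma\}$, and a direct calculation identifies the jump of $L^{(k+1)}$ at $\sigma$ with $\phi(\Delta_k)$, where $\Delta_k := L^{(k)}_\sigma - L^\ast$ and
$$
\phi(d) := \tfrac{1}{N}\bigl|\{i \in C : X^i_{0-} + B^i_\sigma \leq \alpha(L^\ast + d)\}\bigr|
$$
is the cascade operator at $\sigma$. Minimality of $\tilde L$ (inherited from its construction as the iterative limit starting at $0$) forces $\tilde L$'s jump $\Delta := \tilde L_\sigma - L^\ast$ to be the smallest fixed point of $\phi$ in $[0, |C|/N]$, so $\phi(d) > d$ for every $d \in [0, \Delta)$; being $\tfrac{1}{N}\mathbb{Z}$-valued, $\phi$ must then satisfy $\phi(\Delta_k) \geq \Delta_k + 1/N$, which is the desired local $1/N$ improvement at $\sigma$.

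The main obstacle is lifting this pointwise improvement at $\sigma$ to the required $L^\infty$ improvement. The plan is to analyze the ``lagging'' particle sets $V_k(t) := \{i : \tau_{i,N}[\tilde L] \leq t < T_i^{(k)}\}$, whose cardinality equals $N(\tilde L_t - L^{(k)}_t)$; by carefully tracking the inclusion $V_{k+1}(t) \subseteq V_k(t)$ jointly in $t$ and using the cascade-by-cascade structure of $\tilde L$, one shows that the improvement at $\sigma$ removes at least one particle from every $V_k(t)$ attaining the supremum, which gives the desired $L^\infty$ bound.
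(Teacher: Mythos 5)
Your overall architecture (monotone iteration bounded by every solution, termination after $N$ steps, fixed-point property, hence minimality) matches the paper's, but your quantitative engine is different and, as written, incomplete. The paper never proves a sup-norm contraction: it shows instead that $\Gamma_N^{(k)}[0]$ coincides with \emph{all} later iterates on $[0,\sigma_k)$, where $\sigma_k$ is the first time $\Gamma_N^{(k)}[0]$ reaches level $k/N$, and the bound \eqref{eq:nerrorbound} then follows trivially because on $[\sigma_k,\infty)$ both functions take values in $[k/N,1]$. You instead assert the strictly stronger one-step estimate $\|\tilde L - L^{(k+1)}\|_\infty \leq \|\tilde L - L^{(k)}\|_\infty - 1/N$, and this is where the gap lies: what you actually establish (modulo the point below) is only the \emph{pointwise} improvement $L^{(k+1)}_\sigma \geq L^{(k)}_\sigma + 1/N$ at the first disagreement time $\sigma$. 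The passage to the uniform norm is the whole difficulty, and your last paragraph replaces it with a statement of intent (``the plan is to analyze\dots one shows that\dots''). The supremum of $\tilde L_t - L^{(k)}_t$ is typically attained at times $t^*$ far beyond $\sigma$, and the extra default you gain at $\sigma$ need not remove any particle from $V_k(t^*)$: the particle that defaults at $\sigma$ under $L^{(k)}$ but only later under $L^{(k-1)}$ may already default before $t^*$ under $L^{(k-1)}$, in which case it was never in $V_k(t^*)$, and one must then argue that its earlier default triggers some \emph{other} particle in $V_k(t^*)$ to default by $t^*$ — a propagation argument that is not supplied and is not obviously true. Until that is proved, the iteration from $\|\tilde L - L^{(0)}\|_\infty \leq 1$ does not go through, and neither the identity $L^{(N)}=\tilde L$ nor the error bound is established.

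Two smaller issues. First, you justify $\phi(d)>d$ for $d<\Delta$ by the ``minimality of $\tilde L$'', but at that point $\tilde L$ is only the pointwise supremum of the iterates and has not been shown to be a solution, let alone minimal or physical; this is circular as stated. (It is repairable: if $\phi(\Delta_k)=\Delta_k$ then the first disagreement time of $L^{(k+1)}$ is again $\sigma$ with the same deficit, so by induction $L^{(j)}_\sigma = L^{(k)}_\sigma$ for all $j\geq k$, contradicting $\tilde L_\sigma=\sup_j L^{(j)}_\sigma > L^{(k)}_\sigma$ — but this argument, not minimality, is what should be invoked.) Second, the conclusion ``$\tilde L = \Gamma_N[L^{(N-1)}] = \Gamma_N[\tilde L]$'' does not follow as written since $L^{(N-1)}$ need not equal $\tilde L$; the correct step is $\Gamma_N[\tilde L]=\Gamma_N[L^{(N)}]=L^{(N+1)}$, squeezed between $L^{(N)}=\tilde L$ and $\tilde L$. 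I would recommend abandoning the sup-norm contraction and arguing, as the paper does, that the $k$-th iterate is already exact up to the first time it reaches $k/N$.
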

\begin{proof}
For $k \in \N$, define the stopping times 
\begin{align*}
\sigma_k := \inf\big\{t \geq 0 \colon \Gamma_N^{(k)}[0]_t \geq k/N\big\}.
\end{align*}
First, we show by induction on $k$ that 
\begin{align}\label{eq:miniterationinductivebasis}
\Gamma_N^{(k-1)}[0]_t = \Gamma_N^{(k)}[0]_t, \quad t < \sigma_k,%, \quad k \in \N. 
\end{align}
for each $k\in\N$.
For the base case, let $k=1$ (we define $\Gamma_N^{(0)}$ to be the identity operator). Observe that the first time that $\Gamma_N[0]$ jumps coincides with the first time any of the Brownian motions $(X_{0-}^{i} +B^i)_{i\in\N}$ hit $0$, and $\Gamma_N[0]$ is equal to $0$ before that time. This means that we have
$
0 = \Gamma_N^{(0)}[0]_{t}=\Gamma_N^{(1)}[0]_{t}
$
for each $t < \sigma_1$.
For the inductive step, assume the claim holds for all natural numbers up to $k$. Applying $\Gamma_N$ to both sides of \eqref{eq:miniterationinductivebasis}, we obtain 
\begin{align}
\Gamma_N^{(k)}[0]_t = \Gamma_N^{(k+1)}[0]_t, \quad t < \sigma_{k}. 
\end{align}
We distinguish two cases: In the first case, suppose that $\Gamma_N^{(k)}[0]_{\sigma_{k}} > \frac{k}{N}.$ Due to \eqref{eq:GammaNiterationprop}, we then must have $\Gamma_{N}^{(k+1)}[0]_{\sigma_{k}} \geq \frac{k+1}{N}$ and hence $\sigma_{k} = \sigma_{k+1}$, completing the inductive step. In the second case, we have $\Gamma_N^{(k)}[0]_{\sigma_{k}} = \frac{k}{N}$, and using \eqref{eq:GammaNiterationprop} we find for $t \in (\sigma_k, \sigma_{k+1})$
\begin{align*}
\frac{k}{N} = \Gamma_N^{(k)}[0]_{\sigma_{k}} \leq \Gamma_N^{(k)}[0]_t \leq \Gamma_N^{(k+1)}[0]_{t} < \frac{k+1}{N}
\end{align*}
which shows that $\Gamma_N^{(k)}[0]$ and $\Gamma_N^{(k+1)}[0]$ agree
on all of $[0,\sigma_{k+1})$, completing the inductive step.

Having established \eqref{eq:miniterationinductivebasis}, we show 
that $\Linfmin^N = \alpha\Gamma_N^{(N)}[0]$ solves the particle system \eqref{eq:particleN}. For $k \in \N$, repeatedly applying $\Gamma_N$ to \eqref{eq:miniterationinductivebasis}, we find
\begin{align}\label{eq:miniterationGammakandLmin}
\Gamma_N^{(k)}[0]_t = \Gamma_N^{(k+1)}[0]_t = \hdots = \frac{1}{\alpha}\Linfmin^N_t = \Gamma_N\left[\frac{1}{\alpha}\Linfmin^N\right]_t, \quad t < \sigma_{k}. 
\end{align} 
Choosing $k = N$, we obtain $\Linfmin^N = \alpha\Gamma_N\left[\frac{1}{\alpha}\Linfmin^N\right]$ on $[0,\sigma_N)$. Recall that by  \eqref{eq:GammaNiterationprop} it 
holds that $\Linfmin^N \leq \alpha\Gamma_N\left[\frac{1}{\alpha}\Linfmin^N\right]$, so we see that $\alpha = \Linfmin^N_{\sigma_N} \leq \alpha\Gamma_N\left[\frac{1}{\alpha}\Linfmin^N\right]_{\sigma_N} \leq \alpha$ and therefore $\Linfmin^N = \alpha\Gamma_N[\frac{1}{\alpha}\Linfmin^N]$.
By \eqref{eq:GammaNiterationprop}, it follows that $\Linfmin^N$ is indeed the minimal solution.\\   

The error bound \eqref{eq:nerrorbound} is now straightforward: By \eqref{eq:miniterationGammakandLmin}, $\alpha\Gamma^{(k)}_N[0]$ agrees with $\Linfmin^N$ for $t < \sigma_k$ and is increasing in $t$, and therefore
\begin{align*}
\sup_{t\geq 0} |\Linfmin_t^N - \alpha\Gamma_N^{(k)}[0]_t| = \sup_{t\geq\sigma_k} |\Linfmin_t^N - \alpha\Gamma_N^{(k)}[0]_t| \leq \alpha - \alpha\Gamma_N^{(k)}[0]_{\sigma_k}\leq \alpha \left(1 - \frac{k}{N}\right),
\end{align*} 
for each $k \leq N$.
\end{proof}

As mentioned in the introduction, in the weak feedback regime the solution to the McKean--Vlasov problem \eqref{eq:mckeanproblem} is unique. As the examples in Section 3.1.1 in \cite{delarue2015particle} show, no such condition can guarantee uniqueness of solutions for the particle system.

% \begin{example}[Several solutions to the particle system]\label{ex:nonuniqueparticle}
% We show that solutions to system \eqref{eq:particleN} are not necessarily pathwise unique. The example given here is adapted from \cite{delarue2015particle} to fit our framework. Let $N=3$, $t>0$, and suppose no jump occurred before time $t$. Assume that 
% \begin{align*}
% X_{t-}^1 = 0, \quad
% X_{t-}^2 \in (\alpha/3, 2\alpha/3),\quad\text{and}\quad
% X_{t-}^3 \in (\alpha/3, 2\alpha/3).
% \end{align*} 

% By definition, for $i = 1,2,3$ we have 
% \begin{align}\label{eq:nonunique}
% X_{t}^i = X_{t-}^i -\alpha L_{t}. 
% \end{align}
% One solution is then given by letting only $X^1$ default at time $t$, which implies $L_t = 1/3$, and is consistent with $X^2$ and $X^3$ not defaulting. Another solution is given by letting all three banks default at time $t$, which implies $L_t = 1$ and is consistent with equations \eqref{eq:nonunique}, leading to the collapse of the system.
% \end{example}

\subsection{Physical solutions}\label{sec:physical_solutions}

\begin{definition}\label{def:lawstoppedparticle}
If $X^N$ is a solution process to the particle system \eqref{eq:particleN}, define the random subprobability measure 
\begin{align*}
\lawstopped_{t-}^{N} := \frac{1}{N} \sum_{i=1}^{N} \delta_{X_{t-}^{i,N}} \ind{[\tau^{i,N}\geq t]}
\end{align*}
for $t \geq 0$. We call a solution $(\Xphys^N,\Lphys^N)$ of \eqref{eq:particleN} \emph{physical}, if we have
\begin{align}\label{eq:physicalparticlejump}
\Delta \Lphys_t^N = \alpha \inf\left\{\frac{k}{N} \geq 0:~k \in \N,~ \lawstopped_{t-}^{N}\left(\left[0,\alpha \frac{k}{N}\right]\right) \leq \frac{k}{N}\right\}, \quad t \geq 0.
\end{align}
\end{definition}
% We describe the concept of physical solution $(\Xphys^N,\Lphys^N)$ to the particle system, introduced by \cite{delarue2015particle}. Before stating it rigorously, we briefly explain the intuition behind the concept.
% Clearly, the system \eqref{eq:particleN} only jumps at a time $t$ when there is a particle $j$ which has not defaulted until $t$ such that $\Xphys_{t-}^{j,N} = 0$. Supposing that is the case, if letting $\Delta \Lphys_t^N = 1/N$ results in
% there being a particle $i \neq j$ (which survived until time $t$) such that 
% $\Xphys_{t-}^{i,N} - \alpha \Delta \Lphys_t^N \leq 0$,  
% then clearly the value of $\Delta \Lphys_t^N$ must be at least $2/N$ in 
% order to obtain a solution. Continuing in this manner, we resolve the cascade by demanding that $\Delta \Lphys_t^N$ equals the smallest $k/N$ such that the kick the system experiences by taking $\Delta \Lphys_t^N = k/N$ results in no more than $k$ particles defaulting at time $t$.This condition is easily converted into the following definition\footnote{A similar convention was adopted in \cite{dembo2019criticality}, where another type
% of particle system approximating the supercooled Stefan problem is studied.}.

 Note that $N \lawstopped_{t-}^{N}([0,\alpha \frac{k}{N}])$ is the 
number of particles surviving up to time $t$ which would not survive a kick of $\alpha \frac{k}{N}$. Therefore we see that \eqref{eq:physicalparticlejump} corresponds to choosing the smallest possible jump size at any $t \geq 0$. We recognize formula \eqref{eq:physicalparticlejump} as the discrete analogue of the physical jump condition.

\begin{lemma}\label{lemma:physicalminimal}
The physical solution to \eqref{eq:particleN} is equal to the minimal solution to \eqref{eq:particleN}.
\end{lemma}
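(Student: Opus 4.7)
The plan is to show that the minimal solution $\Lmin^N$ itself satisfies the physical jump condition \eqref{eq:physicalparticlejump} at every $t \geq 0$, after which pathwise uniqueness of physical solutions (given the Brownian motions, the jump at each cascade is forced by $\lawstopped_{t-}^N$, and the inter-jump dynamics is driven by the same Brownian paths) forces $\Lmin^N = \Lphys^N$.

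Fix $t \geq 0$, set $k^* := N \Delta \Lmin_t^N$, and introduce the non-decreasing function
\[
f(j) := N \lawstoppedmin_{t-}^N\bigl([0, \alpha j/N]\bigr), \qquad j \in \{0, 1, \dots, N\}.
\]
The particles defaulting at $t$ under $\Lmin^N$ are precisely those that survive up to $t-$ and satisfy $\Xmin_{t-}^{i,N} \in [0, \alpha k^*/N]$, so a direct count gives $f(k^*) = k^*$. It then suffices to rule out the existence of $j < k^*$ with $f(j) \leq j$. Let $j^*$ denote the smallest such $j$, if any; by monotonicity of $f$, the inequality $f(j^*) \leq j^*-1$ would yield $f(j^*-1) \leq j^*-1$ in violation of minimality, so $f(j^*) = j^*$.

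To derive the contradiction I would construct a genuine solution $\bar L^N$ of \eqref{eq:particleN} with $\bar L_t^N < \Lmin_t^N$. Start from the candidate barrier $L^{(0)}_s := \Lmin_s^N$ for $s<t$ and $L^{(0)}_s := \Lmin_{t-}^N + j^*/N$ for $s \geq t$, which clearly satisfies $L^{(0)} \leq \Lmin^N$ pointwise. The self-consistency relation $f(j^*) = j^*$ yields $\Gamma_N[L^{(0)}]_t = L^{(0)}_t$; trivially $\Gamma_N[L^{(0)}]_s = \Lmin_s^N = L^{(0)}_s$ for $s<t$; and $\Gamma_N[L^{(0)}]_s \geq L_t^{(0)} = L_s^{(0)}$ for $s>t$. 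Hence $L^{(0)} \leq \Gamma_N[L^{(0)}] \leq \Lmin^N$, the upper bound coming from $\Gamma_N[\Lmin^N] = \Lmin^N$ and the monotonicity of $\Gamma_N$. Iterating $L^{(n+1)} := \Gamma_N[L^{(n)}]$ and reusing the same counting at $t$, induction produces a sandwich $L^{(0)} \leq L^{(n)} \leq L^{(n+1)} \leq \Lmin^N$ with $L_t^{(n)} = L_t^{(0)}$ for every $n$. Adapting the stabilization-in-$N$-steps argument of Lemma \ref{lem:miniteration} (each strict growth between consecutive iterates contributes at least one additional default), the iteration converges to a fixed point $\bar L^N$ of $\Gamma_N$, i.e., to a solution of \eqref{eq:particleN}, satisfying $\bar L_t^N = \Lmin_{t-}^N + j^*/N < \Lmin_t^N$, contradicting the minimality of $\Lmin^N$.

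The main obstacle I anticipate is the careful bookkeeping in the iteration argument: verifying that the jump at $t$ really stays pinned at $j^*/N$ through every iterate (so that no further cascade at $t$ sneaks in), and that the stabilization-in-at-most-$N$-steps property of Lemma \ref{lem:miniteration} adapts to starting barriers other than $0$. Both points ultimately rest on the self-consistency identity $f(j^*) = j^*$ together with the monotonicity of $\Gamma_N$ and the fact that jump sizes are quantized in multiples of $1/N$.
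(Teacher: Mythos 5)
Your argument is correct in substance but runs in the opposite direction from the paper's, and the two are worth contrasting. The paper shows that the \emph{physical solution is minimal}: both solutions vanish before the first default time, the physical jump is by construction the smallest admissible one while $\Lmin^N$'s jump is admissible, so $\Lphys^N_\sigma\leq\Lmin^N_\sigma$, and minimality forces equality; one then inducts over the (at most $N$) jump times. You instead verify directly that the \emph{minimal solution is physical}: if the set $\{j<k^*: f(j)\leq j\}$ were nonempty, the self-consistency $f(j^*)=j^*$ would let you resolve the cascade at $t$ with the smaller jump $j^*/N$ and continue to a genuine solution lying strictly below $\Lmin^N$ at $t$ — a contradiction. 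Your route is longer but makes explicit \emph{why} a sub-physical jump is admissible (the paper's discussion preceding Definition~\ref{def:physicalparticle} asserts this informally), and it is the direction actually invoked later (minimal $\Rightarrow$ physical, feeding into Theorem~\ref{thm:physconvergetophys}); the paper's route is shorter because it outsources the work to pathwise uniqueness of the physical solution. The one step you leave genuinely open is the stabilization of the iteration $L^{(n+1)}=\Gamma_N[L^{(n)}]$ started from $L^{(0)}\neq 0$. This does close, and most cleanly not by re-proving Lemma~\ref{lem:miniteration} with shifted levels but by a restart: after the jump of size $j^*/N$ at $t$, every surviving particle sits at $y_i=\Xmin^{i,N}_{t-}-\alpha j^*/N>0$, so you may apply Lemma~\ref{lem:miniteration} verbatim to the $N'$-particle system on $[t,\infty)$ with initial positions $(y_i)$ and losses still normalized by $N$; strict positivity of the $y_i$ is exactly what pins the jump at $t$ to $j^*/N$ (the restarted minimal solution does not jump at its own time zero), and concatenation with $\Lmin^N|_{[0,t)}$ yields the desired solution $\bar L^N$ in at most $N$ further iterations. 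With that step made explicit, your proof is complete.
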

\begin{proof}
Note that the physical solution $(\Xphys^N, \Lphys^N)$ to \eqref{eq:particleN} is pathwise unique, as it is unique between jump times  and \eqref{eq:physicalparticlejump} uniquely specifies the size of the jump at any given time. Since the minimal solution $(\Xmin^N, \Linfmin^N)$ is unique by definition, it is  sufficient to show that the physical solution is minimal. 

Let $\sigma$ be the first time the physical solution $(\Xphys^N, \Lphys^N)$ jumps, then for all $t < \sigma$ we have $\Lphys_{t}^N = \Linfmin_{t}^N = 0$, because if the first jump of the minimal solution would happen before $\sigma$ this would contradict the minimality of $(\Xmin^N,\Linfmin^N)$. As physical solutions have minimal jumps, we find $\Lphys_{\sigma}^N \leq \Linfmin_{\sigma}^N$, which due to the minimality of $\Linfmin^N$ implies $\Lphys_{\sigma}^N = \Linfmin_{\sigma}^N$. Repeating this argument for each jump of the physical solution proves the claim. 
\end{proof}

\section{Tightness}
\label{sec:tightness}
As in the previous works on problem at hand, we will deal with the Skorokhod $M_1$-topology in this paper. We explain this choice and collect some fundamental results regarding the $M_1$-topology in  Section~\ref{appA} in the appendix.

\begin{definition}\label{def:levymetric}
Let $w^n, w \in C([0,\infty))$. We say that $w^n$ converges to $w$ with respect to the topology of compact convergence if $w^n \to w$ in $C([0,T])$ for every $T>0$. We also define $\Levymetric$ to be the L\'evy-metric on $\DistFunctions$, that is if $\ell^1,\ell^2 \in \DistFunctions$ we have 
\begin{align*}
\Levymetric(\ell^1,\ell^2) = \inf\{\varepsilon > 0 ~\colon \ell^1_{t+\varepsilon} + \varepsilon \geq \ell^2_t \geq \ell^1_{t-\varepsilon} - \varepsilon,~ \text{for all } t \geq 0\}.
\end{align*}
\end{definition}

It is well-known that the L\'evy-metric metrizes weak convergence. In order to avoid having to work directly with the unwieldy $M_1$-metric, 
the following theorem comes in handy. Recall that $D([T_0,\infty))$ denotes the space of \cadlag paths from $[T_0,\infty)$ to $\R$ furnished with the $M_1$-topology (see Definition~\ref{def:M1infty} for more detail).

\begin{theorem}\label{thm:ExtendedEmbedding}
Define the space $\ExtendedE$ as
\begin{align*}
\ExtendedE := C([0,\infty)) \times \DistFunctions,
\end{align*}
where $\DistFunctions$ is defined as in \eqref{eq:DistFunctiondef}. Endowed with the product topology induced by compact convergence on $C([0,\infty))$ and the L\'evy-metric on $\DistFunctions$, the space $\ExtendedE$ is Polish. For $w\in C([0,\infty))$ and $\ell\in M$, define
\begin{align*}
\hat w_t := \begin{cases} 
w_0 \quad & t \in [-1,0) \\
w_t \quad & t \in [0,\infty)
\end{cases}
\quad
\check \ell_t= \begin{cases} 
0 \quad & t \in [-1,0) \\
\ell_t \quad & t \in [0,\infty).
\end{cases}
\end{align*}
Then, for any $\alpha \in \R$, the embedding $\iota_{\alpha}\colon \ExtendedE \rightarrow D([-1,\infty))$ defined via
$$
\iota_{\alpha}(w,\ell) = \hat w -\alpha\check \ell 
$$
is continuous.
\end{theorem}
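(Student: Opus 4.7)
The plan is to first dispatch the Polish property and then to establish continuity of $\iota_\alpha$ by decomposing $\iota_\alpha(w,\ell)=\hat w-\alpha\check\ell$ into a continuous and a monotone part. For the Polish claim, $C([0,\infty))$ with the topology of compact convergence is Polish, metrized for instance by $\sum_{n\ge 1}2^{-n}\bigl(1\wedge \sup_{t\le n}|w_t-w'_t|\bigr)$, while $\DistFunctions$ is Polish via its identification with $\mathcal P([0,\infty])$, the space of probability measures on the compact metric space $[0,\infty]$. Since the product of two Polish spaces is Polish, so is $\ExtendedE$.

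For continuity of $\iota_\alpha$, I would fix a convergent sequence $(w^n,\ell^n)\to(w,\ell)$ in $\ExtendedE$ and combine three ingredients. The first is trivial: the extension $w\mapsto \hat w$ is continuous from $C([0,\infty))$ into $C([-1,\infty))$ with compact convergence, because $\sup_{t\in[-1,T]}|\hat w^n_t-\hat w_t|\le \sup_{t\in[0,T]}|w^n_t-w_t|$ for every $T>0$, so $\hat w^n\to \hat w$ uniformly on compact subsets of $[-1,\infty)$. The second ingredient is that $\ell\mapsto \check\ell$ is continuous from $(\DistFunctions,\Levymetric)$ into $D([-1,\infty))$ equipped with $M_1$. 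The key structural observation is that $\check\ell^n$ and $\check\ell$ are non-decreasing, take values in $[0,1]$, and share the boundary values $0$ at $-1$ and $1$ at $\infty$. On the subspace of non-decreasing càdlàg functions, $M_1$-convergence is equivalent to Lévy (weak) convergence; this is most transparent via the characterization of the $M_1$ metric through the Hausdorff distance of completed graphs, since for monotone functions the completed graph is itself a monotone curve, for which Hausdorff convergence reduces exactly to pointwise convergence at continuity points of the limit.

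The third ingredient is a Slutsky-type stability property of the $M_1$ topology: if $x^n\to x$ in $M_1$ on $D([-1,\infty))$ and $y^n\to y$ uniformly on compacts with $y\in C([-1,\infty))$, then $x^n+y^n\to x+y$ in $M_1$. While addition is not globally continuous on $D$ under $M_1$, continuity does hold whenever one summand is continuous and is approached uniformly, because then the discontinuity structure of the sum is entirely inherited from $x^n$ and a uniform perturbation can be absorbed into the parametric representations defining the $M_1$ distance. Applied with $x^n=-\alpha\check\ell^n$ and $y^n=\hat w^n$, this yields $\iota_\alpha(w^n,\ell^n)\to \iota_\alpha(w,\ell)$ in $D([-1,\infty))$. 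The main obstacles are precisely the two $M_1$-topological inputs just invoked; both are classical (and can be imported from Whitt's monograph on stochastic-process limits) but require some care with parametric representations. They become tractable exactly because of the clean decomposition of $\iota_\alpha(w,\ell)$ into its monotone component $-\alpha\check\ell$ and its continuous component $\hat w$, which together guarantee that every jump of $\iota_\alpha(w,\ell)$ coincides in location and size with a jump of $-\alpha\check\ell$.
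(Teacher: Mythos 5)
Your proposal is correct and follows essentially the same route as the paper: Polishness via the product of Polish spaces, $M_1$-convergence of the monotone part $-\alpha\check\ell^n$ from Lévy convergence plus the shared left-endpoint value at $-1$ (Whitt's characterization of $M_1$ for monotone functions, Lemma~\ref{thm:M1monconvergence}), and the $M_1$ addition theorem with a continuous summand (a special case of Lemma~\ref{lemma:M1addition}). The only cosmetic difference is that you justify the monotone step via Hausdorff convergence of completed graphs and state the addition step in its ``one summand continuous'' form, whereas the paper cites the corresponding Whitt results directly and reduces to compact intervals $[-1,T]$ with $T$ a continuity point of $\ell$ via Lemma~\ref{lemma:Dinftyconvergence}.
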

\begin{proof}
We show the continuity of $\iota_{\alpha}$. Let $(w^n,\ell^n) \to (w,\ell)$ in
$\ExtendedE$. Let $T>0$ be a continuity point of $\ell$.
By assumption, we have that $w^n \to w$ in $C([0,T])$, which implies that $\widehat{w^n} \to \hat w$ in $C([-1,T])$ and thus in $D([-1,T])$. Again, by assumption, $\ell_t^n$ converges to $\ell_t$ for all $t > 0$ that are continuity points of $\ell$, from which we deduce (using Lemma~\ref{thm:M1monconvergence}) that $-\alpha\widecheck{\ell^n} \to -\alpha\check \ell$ in $D([-1,T])$. By Lemma~\ref{lemma:M1addition}, it follows that
\begin{align*}
\lim_{n\to\infty} \iota_{\alpha}(w^n,\ell^n) = \lim_{n\to\infty} (\widehat{w^n} -\alpha\widecheck{\ell^n}) = \hat w -\alpha\check \ell = \iota_{\alpha}(w,\ell)
\end{align*}
in $D([-1,T])$. The conclusion now follows from Lemma~\ref{lemma:Dinftyconvergence}.
\end{proof}

% \begin{remark}\label{rem:extension}
 It is necessary to extend the domain artificially to the left to obtain
 a continuous embedding of $\ExtendedE$ into the \cadlag functions equipped
 with the $M_1$-topology as in Theorem~\ref{thm:ExtendedEmbedding}.
 The reason lies in the requirements of Lemma~\ref{thm:M1monconvergence}, more precisely the requirement of pointwise convergence in the left interval endpoint. The extension procedure might now look like a cheap trick to circumvent having to show pointwise convergence in $0$ (which it is), but indeed, one easily constructs examples of sequences $(\ell^n)_{n\in \N}$ that converge in $\DistFunctions$, but do  not converge pointwise in $0$ against their \cadlag limit, such as
 $
 \ell^n_t := \ind{[1/n\leq t]}.
 $
% \end{remark}

\begin{theorem}\label{thm:Dinftytightness}
Let $(X^N)_{N\in \N}$ be a sequence of stochastic processes with paths in $D([0,\infty))$. Suppose that $X^N$ almost surely admits a decomposition
\begin{align}\label{eq:Xdecomposition}
X^N = Z^N - \alpha_N \mathsf{L}^N
\end{align} 
where $\alpha_N$ is a (possibly random) real number, $Z^N$ is continuous, and $\mathsf{L}^N \in M$. Suppose that $(Z^N)_{N\in \N}$ is tight on $C([0,T])$ for each $T>0$ and that $(\alpha_N)_{N\in \N}$ is tight on $\R$.
Set
\begin{align}\label{eq:processformhat}
\hat X_t^{N} := \begin{cases}
Z_{0}^N, \quad & t \in [-1,0), \\
X_{t}^{N}, \quad & t \in [0,\infty).
\end{cases}
\end{align}
Then, the random variables $((Z^N,\mathsf{L}^N))_{N\in\N}$ are tight on $\ExtendedE$ and random variables $(\hat X^{N})_{N\in\N}$ are tight on $D([-1,\infty))$.
\end{theorem}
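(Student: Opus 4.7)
The plan is to first establish tightness of $((Z^N,L^N))_{N\in\N}$ on $\ExtendedE$ by handling the two coordinates separately, and then transport this to tightness of $(\hat X^N)_{N\in\N}$ on $D([-1,\infty))$ by strengthening Theorem~\ref{thm:ExtendedEmbedding} to joint continuity in the scalar $\alpha$.

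I would first tackle tightness on $\ExtendedE$. The space $\DistFunctions$ is \emph{compact}, being in bijection with $\mathcal{P}([0,\infty])$ via weak convergence, and $\mathcal{P}([0,\infty])$ is compact because $[0,\infty]$ is. Consequently $(L^N)_{N\in\N}$ is automatically tight on $\DistFunctions$. For $(Z^N)_{N\in\N}$, the topology of compact convergence on $C([0,\infty))$ is metrized by a countable sum of the uniform norms on $C([0,k])$, so a standard diagonal argument upgrades the hypothesized tightness on each $C([0,T])$ to tightness on $C([0,\infty))$: given $\e>0$, choose compact $K_k\subset C([0,k])$ with $\P{Z^N|_{[0,k]}\in K_k}\geq 1-\e\,2^{-k}$ and intersect the preimages under the restriction maps. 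Tightness on the product space $\ExtendedE$ then follows from tightness of each marginal.

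The second step is to extend the embedding of Theorem~\ref{thm:ExtendedEmbedding} to a jointly continuous map $\bar\iota\colon\R\times\ExtendedE\to D([-1,\infty))$ defined by $\bar\iota(\alpha,w,\ell)=\hat w-\alpha\check\ell$. Given $(\alpha_n,w^n,\ell^n)\to(\alpha,w,\ell)$, I decompose
\begin{equation*}
\alpha_n\check{\ell^n}-\alpha\check\ell=(\alpha_n-\alpha)\check{\ell^n}+\alpha(\check{\ell^n}-\check\ell).
\end{equation*}
The first summand vanishes in sup norm because $\|\check{\ell^n}\|_\infty\leq 1$, hence also in $M_1$. For the second, the sequence $\alpha\check{\ell^n}$ is monotone (in a common direction determined by the sign of $\alpha$) and converges pointwise to $\alpha\check\ell$ at every continuity point of $\alpha\check\ell$ as well as at the left endpoint $-1$, so Lemma~\ref{thm:M1monconvergence} yields $M_1$-convergence on $[-1,T]$ for every continuity point $T$ of $\ell$. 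Combined with the uniform convergence $\hat{w^n}\to\hat w$, Lemma~\ref{lemma:M1addition} then gives $\bar\iota(\alpha_n,w^n,\ell^n)\to\bar\iota(\alpha,w,\ell)$ in $D([-1,T])$, and Lemma~\ref{lemma:Dinftyconvergence} promotes this to convergence in $D([-1,\infty))$.

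With joint continuity of $\bar\iota$ in hand, tightness of $(\hat X^N)_{N\in\N}$ follows at once: for any $\e>0$, pick a compact $A\subset\R$ with $\P{\alpha_N\in A}\geq 1-\e/2$ and a compact $K\subset\ExtendedE$ with $\P{(Z^N,L^N)\in K}\geq 1-\e/2$; then $\hat X^N=\bar\iota(\alpha_N,Z^N,L^N)$ lies in the compact set $\bar\iota(A\times K)\subset D([-1,\infty))$ with probability at least $1-\e$. The only delicate point is the verification of joint continuity of $\bar\iota$ in the scalar variable, which ultimately reduces to the fact that scalar multiplication of a monotone, uniformly bounded $M_1$-convergent sequence preserves $M_1$-convergence — a routine consequence of the monotone characterisation in Lemma~\ref{thm:M1monconvergence}, but the step where one has to be careful about the interplay between the random scalar $\alpha_N$ and the $M_1$-topology.
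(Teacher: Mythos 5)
Your proof is correct. The first half (tightness of $((Z^N,L^N))_{N\in\N}$ on $\ExtendedE$) is essentially identical to the paper's argument: the same diagonal/Tychonoff construction upgrades tightness on each $C([0,T])$ to tightness on $C([0,\infty))$, and compactness of $\DistFunctions$ handles the second coordinate. The second half differs in how the random scalar $\alpha_N$ is absorbed. The paper fixes $r$ with $\P{|\alpha_N|>r}<\e/2$ and exhibits the explicit compact set $(\hat K_\e - r\check \DistFunctions)\cup(\hat K_\e + r\check \DistFunctions)$, i.e.\ the union of the images of $K_\e\times \DistFunctions$ under $\iota_{r}$ and $\iota_{-r}$; you instead upgrade Theorem~\ref{thm:ExtendedEmbedding} to joint continuity of $(\alpha,w,\ell)\mapsto \hat w-\alpha\check\ell$ and take the continuous image $\bar\iota(A\times K)$. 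Both routes work, but yours is arguably tighter on one point: the paper's containment $\hat X^N\in \hat K_\e \mp r\check \DistFunctions$ for $0<|\alpha_N|<r$ is not literal, since writing $\alpha_N\check L^N=r\cdot\widecheck{(\alpha_N/r)L^N}$ produces only a subdistribution function, not an element of $\DistFunctions$; your formulation avoids having to enlarge or rescale $\DistFunctions$. The one genuinely new ingredient you need — that $\alpha_n\check{\ell^n}\to\alpha\check\ell$ in $M_1$ when $\alpha_n\to\alpha$ and $\ell^n\to\ell$ in $\DistFunctions$ — is correctly handled by your decomposition: the term $(\alpha_n-\alpha)\check{\ell^n}$ vanishes uniformly thanks to the bound $\|\check{\ell^n}\|_\infty\le 1$, and a uniformly vanishing perturbation cannot destroy $M_1$-convergence (its limit is continuous, so Lemma~\ref{lemma:M1addition} applies), while the term $\alpha(\check{\ell^n}-\check\ell)$ is controlled by the monotone criterion of Lemma~\ref{thm:M1monconvergence} exactly as in the proof of Theorem~\ref{thm:ExtendedEmbedding}.
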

\begin{proof}
Let $\varepsilon > 0$ and let $r > 0$ be such that 
$
\P{|\alpha_N| > r} < \varepsilon/2
$
or each $N\in\N$.
 Choose a sequence of positive numbers $(T_k)_{k\in \N}$ such that $T_k \nearrow \infty$. Because  $(Z^{N})_{N\in \N}$ is tight on $C([0,T_k])$, we may pick $K_{\varepsilon}^{k} \subseteq C([0,T_k])$ compact, such that
\begin{align*}
\P{Z^{N} \notin K_{\varepsilon}^{k}} < \varepsilon 2^{-(k+1)}, \quad N\in \N.
\end{align*}
Set
$
K_{\varepsilon} := \{ w\in C([0,\infty)): w \in K_{\varepsilon}^{k},~ k \in \N\}.
$
Then $K_\varepsilon$ is compact in the topology of compact convergence (which follows e.g. from Tychonoff's theorem). We obtain that
\begin{align*}
\P{(Z^N,\mathsf{L}^N) \notin K_{\varepsilon} \times \DistFunctions} = \P{Z^N \notin K_{\varepsilon}} \leq \sum_{k=1}^{\infty}\P{Z^N \notin K_{\varepsilon}^k} \leq \varepsilon/2,
\end{align*}
and as $M$ is compact, this shows that $((Z^N,\mathsf{L}^N))_{N\in\N}$ is tight on $\ExtendedE$. 
 Theorem~\ref{thm:ExtendedEmbedding} now shows that $$K:= (\hat K_\varepsilon - r\check M) \cup (\hat K_\varepsilon + r\check M)$$ is compact in $D([-1,\infty))$. We obtain, uniformly in $N \in \N$,

\begin{align*}
\P{\hat X^{N} \notin K} \leq \P{ Z^{N} \notin  K_\varepsilon} + \P{|\alpha_N|>r}  < \varepsilon.
\end{align*}  
So $(\hat X^{N})_{N\in\N}$ is tight on $D([-1,\infty))$.
\end{proof}

When we are concerned with a process admitting a decomposition of the form \eqref{eq:Xdecomposition} in $D([0,\infty))$, from now on we will not distinguish between the process and its extension as given in \eqref{eq:processformhat} to $D([-1,\infty))$ in our notation.

\begin{definition}
Set $X^N:=(X^{1,N},\dots,X^{N,N})$. We say that $X^N$ is $N$-exchangeable, if 
\begin{align*}
\law(X^N)=\law ((X^{\sigma(1),N},X^{\sigma(2),N},\dots,X^{\sigma(N),N})),
\end{align*}
for any permutation $\sigma$ of $\{1,\dots,N\}$.
\end{definition}

% \begin{proposition}\label{thm:sznitman}
% Let $E$ be a Polish space and let $X^N$ be $N$-exchangeable on $E^N$ for every $N \in \N$,
% then the $\mathcal{P}(E)$-valued random variables $(\EmpM_N)_{N\in\N}$ given by
% \begin{align*}
% \EmpM_N = \frac{1}{N}\sum_{i=1}^{N}\delta_{X^{i,N}}
% \end{align*}
% are tight if and only if $(X^{1,N})_{N\in\N}$ is tight on $E$.
% \end{proposition}
% \begin{proof}
% See \cite[Proposition 2.2]{sznitman1991topics}.
% \end{proof}

\begin{corollary}\label{cor:tightness}
Suppose that $X^N$ satisfies the dynamics
\begin{align*}
X_t^{i,N} = X_{0-}^{i,N} + B_t^{i} - \alpha \mathsf{L}_t^N
\end{align*}
where $\alpha > 0$, $(X_{0-}^N)_{N\in\N}$ are $N$-exchangeable random vectors, $(B_i)_{i\in\N}$ are independent Brownian motions, and
$L^N \in \DistFunctions$. If 
$(X_{0-}^{1,N})_{N\in \N}$ is tight on $\R$, then the empirical measures
\begin{equation}\label{eqn21}
\EmpM_N = \frac{1}{N} \sum_{i=1}^{N} \delta_{X^{i,N}} \qquad\text{and}\qquad
\EmpMext_N = \frac{1}{N} \sum_{i=1}^{N} \delta_{(X_{0-}^{i,N}+B^i,\mathsf{L}^N)}
\end{equation}
are tight on $\mathcal{P}(D([-1,\infty)))$ and $\mathcal{P}(\ExtendedE)$, respectively.
\end{corollary}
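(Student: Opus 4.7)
The plan is to combine Sznitman's exchangeability criterion (Proposition~\ref{thm:sznitman}) with the tightness result of Theorem~\ref{thm:Dinftytightness}. Proposition~\ref{thm:sznitman} reduces tightness of the empirical measures to $N$-exchangeability of the underlying random vectors together with tightness of the first marginal, and the latter is precisely what Theorem~\ref{thm:Dinftytightness} delivers, applied to the natural decomposition $X^{i,N} = (X_{0-}^{i,N}+B^i) - \alpha L^N$.

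First I would verify the exchangeability hypothesis. Because $L^N$ is common to all particles, a permutation $\sigma$ of the indices maps $(X^{1,N},\dots,X^{N,N})$ to $(X_{0-}^{\sigma(1),N}+B^{\sigma(1)}-\alpha L^N,\dots,X_{0-}^{\sigma(N),N}+B^{\sigma(N)}-\alpha L^N)$. The assumed $N$-exchangeability of $(X_{0-}^{i,N})_{i=1}^N$ combined with the fact that $(B^i)_{i\in\N}$ are iid and independent of the initial conditions therefore yields $N$-exchangeability of $(X^{1,N},\dots,X^{N,N})$ on $D([-1,\infty))^N$. The same argument, with $L^N$ once again untouched by the permutation, gives $N$-exchangeability on $\ExtendedE^N$ of the vector whose $i$-th component is $(X_{0-}^{i,N}+B^i,L^N)$.

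Next I would invoke Theorem~\ref{thm:Dinftytightness} with the first particle. Set $Z^{1,N} := X_{0-}^{1,N}+B^1$ and $\alpha_N :\equiv \alpha$; the latter is deterministic and hence trivially tight on $\R$. Tightness of $(Z^{1,N})_{N\in\N}$ on $C([0,T])$ for each $T>0$ is immediate from tightness of $(X_{0-}^{1,N})_{N\in\N}$ on $\R$, since the modulus of continuity is inherited from the single Brownian motion $B^1$ while initial-value tightness is given. Theorem~\ref{thm:Dinftytightness} then simultaneously yields tightness of $(X^{1,N})_{N\in\N}$ on $D([-1,\infty))$ and of $((Z^{1,N},L^N))_{N\in\N}$ on $\ExtendedE$. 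Feeding each of these into Proposition~\ref{thm:sznitman} closes the argument for $(\EmpM_N)$ and $(\EmpMext_N)$ respectively.

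There is no genuine obstacle here; the statement is a clean assembly of the two tools already in hand. The only point worth highlighting is that the common loss process $L^N$, being identical across particles, is perfectly compatible with exchangeability and is absorbed entirely into the ``shift'' part handled by Theorem~\ref{thm:Dinftytightness}. In particular one does not need any regularity or coupling property of $L^N$ beyond membership in $\DistFunctions$, whose compactness under the L\'evy-metric was already observed in Section~\ref{sec:minimalmckean}.
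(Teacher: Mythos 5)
Your proof is correct and follows essentially the same route as the paper: establish tightness of the single-particle continuous part $X_{0-}^{1,N}+B^1$ on $C([0,T])$ for each $T$ (the paper does this via a sum of compact sets $K_{0-}+K$, you via the equivalent modulus-of-continuity argument), then apply Theorem~\ref{thm:Dinftytightness} and Proposition~\ref{thm:sznitman}. Your explicit verification of exchangeability is a point the paper leaves implicit, but it does not change the argument.
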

\begin{proof}
Fix $\varepsilon > 0$ and let $T>0$. As $C([0,T])$ is a Polish space, there is a compact set $K \subseteq C([0,T])$ such that $ \P{B^1 \in K} > 1 - \varepsilon/2.$
By assumption, there is a compact set $K_{0-} \subseteq \R$ such that
$ \mathbb P(X_{0-}^{1,N} \in K_{0-}) > 1- \varepsilon/2$, for each $N \in \N.$
As $K_{0-}+K$ is compact in $C([0,T])$, and 
$$ \P{X_{0-}^{1,N} + B^1 \notin K_{0-} + K} \leq \varepsilon, 
\quad N \in \N, $$ 
we find that $(X_{0-}^{1,N}+B^1)_{N\in\N}$ is tight on $C([0,T])$ for every $T>0$. The claim now follows from Theorem~\ref{thm:Dinftytightness} and Proposition 2.2 in \cite{sznitman1991topics}.
\end{proof}

We state a technical result for future reference.
\begin{corollary}\label{cor:Phantomcorollary}
 Let $(\EmpM_N)_{N\in \N}$ be given as in Corollary~\ref{cor:tightness}. Then there 
are $\mathcal{P}(\ExtendedE)$-valued random variables $\EmpMext, \EmpMext_N$ such that, after passing to subsequences if necessary,
$
\law(\mu_N) = \law(\iota_{\alpha}(\xi_N)),$
$\EmpMext_N \to \EmpMext,$ and $\iota_{\alpha}(\EmpMext_N) \to \iota_{\alpha}(\EmpMext)$
almost surely. Moreover, 
\begin{equation}\label{eqn10}
\law(\EmpMext_N)=\law\Big(\frac{1}{N} \sum_{i=1}^{N} \delta_{(X_{0-}^{i,N}+B^i,\mathsf{L}^N)}\Big).
\end{equation}
\end{corollary}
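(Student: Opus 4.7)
The plan is to unpack the notation first: if we let $\xi_N^{\circ} := \frac{1}{N}\sum_{i=1}^{N} \delta_{(X_{0-}^{i,N}+B^i,L^N)}$, then by the definitions in Theorem~\ref{thm:ExtendedEmbedding} the pushforward $\iota_{\alpha}(\xi_N^\circ)$ (the image measure under the continuous embedding $\iota_\alpha\colon \bar E \to D([-1,\infty))$) is exactly
\[
(\iota_\alpha)_*\xi_N^\circ = \frac{1}{N}\sum_{i=1}^{N}\delta_{\iota_\alpha(X_{0-}^{i,N}+B^i,L^N)} = \frac{1}{N}\sum_{i=1}^{N}\delta_{X^{i,N}} = \mu_N,
\]
after we canonically extend each $X^{i,N}$ to $[-1,\infty)$ as in \eqref{eq:processformhat}. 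So once the correct $\xi_N^{\circ}$ is identified, the law identity $\law(\mu_N) = \law(\iota_\alpha(\xi_N))$ becomes essentially a tautology provided we choose representatives with the right marginal laws.

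Next, I would invoke Corollary~\ref{cor:tightness}, which gives tightness of $(\xi_N^{\circ})_{N\in\N}$ on the Polish space $\mathcal{P}(\bar E)$. By Prokhorov's theorem we can extract a subsequence along which $\xi_{N_k}^{\circ}$ converges in distribution to some $\xi^{\circ}\in\mathcal{P}(\bar E)$. Since $\mathcal{P}(\bar E)$ is Polish, Skorokhod's representation theorem furnishes, on a new probability space, random variables $\xi_{N_k}$ and $\xi$ with
\[
\law(\xi_{N_k}) = \law(\xi_{N_k}^{\circ}),\qquad \law(\xi) = \law(\xi^{\circ}),\qquad \xi_{N_k}\to\xi\quad\text{a.s.}
\]
Relabelling along the subsequence gives the $\xi_N, \xi$ claimed in the statement, and the desired law identity \eqref{eqn10} is preserved by construction.

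Finally, to upgrade $\xi_N\to\xi$ to $\iota_\alpha(\xi_N)\to\iota_\alpha(\xi)$ almost surely, I would show that the pushforward map $\nu\mapsto (\iota_\alpha)_*\nu$ is continuous from $\mathcal{P}(\bar E)$ to $\mathcal{P}(D([-1,\infty)))$. This follows from Theorem~\ref{thm:ExtendedEmbedding}: for any $f\in C_b(D([-1,\infty)))$ the composition $f\circ \iota_\alpha$ lies in $C_b(\bar E)$, so by the definition of weak convergence
\[
\int f\,\mathrm{d}(\iota_\alpha)_*\nu_n = \int f\circ \iota_\alpha\,\mathrm{d}\nu_n \longrightarrow \int f\circ \iota_\alpha\,\mathrm{d}\nu = \int f\,\mathrm{d}(\iota_\alpha)_*\nu
\]
whenever $\nu_n\to\nu$ in $\mathcal{P}(\bar E)$. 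Applying this pointwise (on the almost sure convergence event from Skorokhod) gives $\iota_\alpha(\xi_N)\to\iota_\alpha(\xi)$ almost surely, completing the proof.

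I do not expect a genuine obstacle here; the statement is essentially the concatenation of Prokhorov, Skorokhod representation, and the continuous mapping theorem applied to the pushforward operator, with the only minor care required being the bookkeeping that verifies $\iota_\alpha(\xi_N^{\circ})=\mu_N$ and that laws are preserved when passing through Skorokhod's construction.
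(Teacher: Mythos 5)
Your proposal is correct and follows essentially the same route as the paper: identify $\xi_N$ with the empirical measure on $\ExtendedE$ from Corollary~\ref{cor:tightness} so that $\mu_N=\iota_\alpha(\xi_N)$, extract a convergent subsequence by tightness, apply the Skorokhod representation theorem, and use the continuity of $\iota_\alpha$ from Theorem~\ref{thm:ExtendedEmbedding} to transfer the almost-sure convergence through the pushforward. The only difference is that you spell out the continuity of the pushforward operator $\nu\mapsto(\iota_\alpha)_*\nu$ explicitly, which the paper leaves implicit.
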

\begin{proof}
Set $\EmpMext_N$ as in Corollary~\ref{cor:tightness} and note that $\EmpM_N=\iota_{\alpha}(\EmpMext_N)$ for every $N \in \N$. After passing to subsequences if necessary,
we may assume that $\law(\EmpMext_N) \to \law(\EmpMext)$ by virtue of Corollary~\ref{cor:tightness} for some random variable $\EmpMext$. By the Skorokhod representation theorem, we may assume as well that 
$
\lim_{N\to\infty} \EmpMext_N = \EmpMext
$
 holds in $\mathcal{P}(\ExtendedE)$ for a representation sequence.
Since $\iota_{\alpha}$ is continuous by Theorem~\ref{thm:ExtendedEmbedding}, this implies 
$
\lim_{N\to\infty}\iota_{\alpha}(\EmpMext_N) = \iota_{\alpha}(\EmpMext)
$
almost surely.
\end{proof}

\section{Convergence of solutions}
\label{sec:convergence}
Consider the setting described in Corollary~\ref{cor:tightness} where 
\begin{align*}
X_t^{i,N} = X_{0-}^{i,N} + B_t^{i} - \alpha \mathsf{L}_t^N
\end{align*}
for  $\alpha > 0$, some $N$-exchangeable random vectors $(X_{0-}^N)_{N\in\N}$, some  independent Brownian motions $(B_i)_{i\in\N}$, and some
$\mathsf{L}^N \in \DistFunctions$ adapted to the filtration $(\mathcal F_t^N)_{t\geq0}$ generated by $(X_{0-}^{N},B^1,\ldots,B^N)$. Assume that $(X_{0-}^N)_{N\in\N}$ is independent of $(B_i)_{i\in\N}$
as well as 
$(X_{0-}^{1,N})_{N\in \N}$ is tight on $\R$. Setting again
\begin{equation}\label{eqn20}
\EmpM_N = \frac{1}{N} \sum_{i=1}^{N} \delta_{X^{i,N}}
\end{equation}
 we can then make use of Corollary~\ref{cor:Phantomcorollary} to deduce that there 
are $\mathcal{P}(\ExtendedE)$-valued random variables $\EmpMext, \EmpMext_N$ such that $\EmpMext_N$ satisfies \eqref{eqn10} and, after passing to subsequences if necessary,
\begin{equation}\label{eqn11}
\law(\mu_N) = \law(\iota_{\alpha}(\xi_N)),\quad\EmpMext_N \to \EmpMext,\quad\text{and}\quad\iota_{\alpha}(\EmpMext_N) \to \iota_{\alpha}(\EmpMext)
\end{equation}
almost surely. The goal of this section is now to study the properties of the random measure $\EmpMext$. This analysis will lead to the conclusion that solutions to the particle systems converge to solutions of the McKean--Vlasov problem \eqref{eq:mckeanproblem}.

%This result might not seem new, but the fact that we are working with the space $\overline{E}$ embedded into $D([-1,\infty))$ via the map $\iota_{\alpha}$ requires us to prove results which only deviate %marginally from some results in the published literature. When simplifications due to the slightly different setting are possible, we present (sometimes alternative) proofs. 

This result might not seem new, but the fact that we are working with the space $\overline{E}$ embedded into $D([-1,\infty))$ via the map $\iota_{\alpha}$ implies that the involved results in the literature cannot be applied directly. When significant simplifications due to the  different setting are possible, we include them in the (sometimes alternative) proofs

\begin{definition}\label{def:pathfunctionals}
For $t \in \R$ and $x\in D([-1,\infty))$, define the path functionals
$$
\tau_0 (x) := \inf\{s \geq 0: x_s \leq 0\}\quad\text{and}\quad
\ellfunc_t (x) := \ind{[\tau_0 (x) \leq t]}. 
$$
\end{definition}
Considering a sequence of constant positive functions converging to zero shows that $\ellfunc_t$ is not continuous on $D([-1,\infty))$ with the $M_1$-topology. However, it turns out that $\ellfunc_t$ is continuous at paths that satisfy a certain crossing property (going back to \cite{delarue2015particle}), which ensures that the path actually dips below the $x$-axis at the first hitting time of zero (see Lemma \ref{lemma:muellconvergence}).

\begin{lemma}\label{lemma:infrestriction}
Let $x^n,x \in \iota_{\alpha}(\ExtendedE)$ and suppose that $x^n \to x$ in $D([-1,\infty))$. Then, if $t\geq 0$ is any continuity point of $x$, it holds that
\begin{align*}
\lim_{n\to\infty} \inf_{0\leq s \leq t}x_s^n = \inf_{0\leq s\leq t}x_s.
\end{align*}
\begin{proof}
The result follows by Lemma~\ref{lemma:limM1inf} and the fact that for each $y \in \iota_{\alpha}(\ExtendedE)$ and $t \geq 0$ we have  
$
\inf_{-1\leq s\leq t} y_s = \inf_{0\leq s\leq t} y_s
$.
\end{proof}
\end{lemma}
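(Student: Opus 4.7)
My plan is to reduce the statement to the invoked Lemma~\ref{lemma:limM1inf}, which (I presume from context and the sketch provided in the excerpt) asserts continuity of the infimum functional $y\mapsto \inf_{-1\leq s\leq t} y_s$ on $D([-1,\infty))$ at paths $y$ for which $t$ is a continuity point. The point of extending the domain to $[-1,0)$ in Theorem~\ref{thm:ExtendedEmbedding} (cf.\ Remark~\ref{rem:extension}) was precisely to secure $M_1$-convergence up to and including the left endpoint, so this result is tailor-made for the infimum over $[-1,t]$, not over $[0,t]$. Thus the plan is to transfer the convergence from $[-1,t]$-infima to $[0,t]$-infima using the special structure of elements of $\iota_\alpha(\ExtendedE)$.

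First, I unpack what it means for $y$ to lie in $\iota_\alpha(\ExtendedE)$: by Theorem~\ref{thm:ExtendedEmbedding}, $y=\hat w-\alpha\check\ell$ for some $w\in C([0,\infty))$ and $\ell\in \DistFunctions$, so $y_s=w_0$ for all $s\in[-1,0)$ and $y_0=w_0-\alpha\ell_0\leq w_0$ since $\alpha>0$ and $\ell_0\geq 0$. Consequently, the infimum of $y$ on $[-1,0)$ equals $w_0$, which is at least $y_0$, and $y_0$ is itself included in the infimum over $[0,t]$. This yields the identity
\[
\inf_{-1\leq s\leq t} y_s=\min\!\Big(w_0,\inf_{0\leq s\leq t} y_s\Big)=\inf_{0\leq s\leq t} y_s,
\]
which is the pointwise equality claimed in the lemma's hint.

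Having established this identity for both $x$ and each $x^n$, I apply Lemma~\ref{lemma:limM1inf} to the sequence $x^n\to x$ in $D([-1,\infty))$ at the continuity point $t$ of $x$, obtaining $\inf_{-1\leq s\leq t}x_s^n\to\inf_{-1\leq s\leq t}x_s$. Substituting the identity on both sides gives the desired $\inf_{0\leq s\leq t}x_s^n\to\inf_{0\leq s\leq t}x_s$, concluding the proof.

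I do not anticipate any real obstacle here: the entire content of the lemma is the realization that elements of $\iota_\alpha(\ExtendedE)$ attain their infimum on $[-1,t]$ already on $[0,t]$, so that the technically convenient infimum functional (continuous on $D([-1,\infty))$ at the right points) agrees with the probabilistically meaningful one. The only minor care needed is to note that the estimate $y_0\leq w_0$ uses both the sign of $\alpha$ and monotonicity of $\ell$ with $\ell_{0-}=0$, both of which are built into the definitions of $\DistFunctions$ and $\iota_\alpha$.
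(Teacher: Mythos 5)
Your proposal is correct and follows exactly the paper's argument: apply Lemma~\ref{lemma:limM1inf} on $[-1,t]$ and then use that for $y=\hat w-\alpha\check\ell\in\iota_\alpha(\ExtendedE)$ one has $\inf_{-1\leq s\leq t}y_s=\inf_{0\leq s\leq t}y_s$, since $y\equiv w_0$ on $[-1,0)$ and $y_0=w_0-\alpha\ell_0\leq w_0$. You merely spell out the justification of that identity, which the paper leaves implicit.
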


\begin{lemma}\label{lemma:muellconvergence}
Assume that $(\muext^n)_{n\in \N}$ is a convergent sequence of probability measures on $\ExtendedE$ with limit $\muext$. Define $\mu^n := \iota_{\alpha}(\xi^n)$ and $\mu := \iota_{\alpha}(\xi).$ If $\mu$-almost every path satisfies the crossing property, i.e., we have
\begin{align}\label{eq:crossingmu}
\mu \big(\big\{ x \in D([-1,\infty)): \inf_{0\leq s\leq h} (x_{\tau_0+s} - x_{\tau_0}) = 0 \big\}\big) = 0, \quad h > 0, 
\end{align}
then
$
\lim_{n\to\infty} \langle \mu^n, \ellfunc \rangle = \langle \mu, \ellfunc \rangle
$
 holds in $\DistFunctions$.
\begin{proof}
The proof is analogous to the proof of Proposition 5.8 in \cite{delarue2015particle}, making use of Lemma \ref{lemma:infrestriction}.
\end{proof}
\end{lemma}

The next lemma can be proven by translating the last part of the proof of Lemma 5.9 in \cite{delarue2015particle} to the current setting. As our setup allows for a simpler proof using martingale convergence arguments\footnote{Note that in \cite{ledger2021mercy}, martingale convergence arguments are employed to characterize limit points of the particle system with more general dynamics, also allowing for common noise.}, we provide a proof sketch.

\begin{lemma}\label{lem:wBrownian} 
For almost every realization $\omega$, if $\law((W,\mathsf{L}))=\EmpMext(\omega)$, then $W-W_0$ is a Brownian motion with respect to the filtration generated by $(W,\mathsf{L})$. In particular, $W-W_0$ is independent of $W_0$.
\end{lemma}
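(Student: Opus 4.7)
The plan is to verify directly that, for a.e.~$\omega$, the increment $W_t - W_s$ has distribution $\mathcal N(0,t-s)$ and is independent of $\mathcal{G}_s := \sigma(W_r, L_r : r\le s)$ under $\xi(\omega)$; together with path continuity (automatic since $\xi(\omega)$ is supported on $\ExtendedE = C([0,\infty))\times \DistFunctions$), this yields the Brownian motion property. Both statements are captured by the single characteristic-function identity
\begin{equation}\label{eq:cfidentityproof}
\left\langle \xi(\omega),\, F(W,L)\cdot\big(e^{iu(W_t - W_s)} - e^{-u^2(t-s)/2}\big)\right\rangle = 0,
\end{equation}
which I would verify for $0\le s\le t$, $u\in \R$, and bounded continuous product test functions
\[
F(W,L) = h(W_{s_1},\ldots,W_{s_m})\,g(L_{r_1},\ldots,L_{r_n}), \qquad s_i,r_j\le s.
\]
A monotone class argument then extends the identity to every bounded $\mathcal{G}_s$-measurable $F$.

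At the particle level the empirical version of \eqref{eq:cfidentityproof} is an average over $i$ of terms $F^i\cdot(e^{iu(B^i_t - B^i_s)} - e^{-u^2(t-s)/2})$, where $F^i = h(X^{i,N}_{s_1},\ldots)\,g(L^N_{r_1},\ldots)$. Since $L^N$ is adapted to $(\mathcal{F}^N_t)_t$ and the evaluation times are at most $s$, the variable $F^i$ is $\mathcal{F}^N_s$-measurable. Conditioning on $\mathcal{F}^N_s$, the Brownian increment $B^i_t - B^i_s$ is independent with characteristic function $e^{-u^2(t-s)/2}$, so each term has mean zero; for $i\ne j$ the cross term vanishes by conditional independence of $B^i_t-B^i_s$ and $B^j_t-B^j_s$. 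By $N$-exchangeability only the diagonal survives in the second-moment estimate, yielding
\[
\E{\Big|\big\langle \xi_N,\, F\cdot(e^{iu(W_t-W_s)} - e^{-u^2(t-s)/2})\big\rangle\Big|^2} = \bigO(1/N).
\]

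To identify the $L^2$-limit, I would invoke the almost sure convergence $\xi_N\to\xi$ in $\mathcal{P}(\ExtendedE)$ from Corollary~\ref{cor:Phantomcorollary}. The integrand in \eqref{eq:cfidentityproof} is bounded, and $\xi(\omega)$-almost surely continuous on $\ExtendedE$ provided each $r_j$ is a continuity point of $L$ under $\xi(\omega)$. Portmanteau then gives almost sure convergence of the integrals against $\xi_N$ to those against $\xi(\omega)$, and combined with the $\bigO(1/N)$ bound this forces \eqref{eq:cfidentityproof} to hold almost surely. Selecting countable dense families of $s,t\in\Q_+$, $u\in\Q$, $h$, $g$, and of evaluation times $r_j$, one produces a single null set outside of which \eqref{eq:cfidentityproof} holds for all such parameters, and the monotone class step then delivers the independent-increment characterization. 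The final assertion that $W - W_0$ is independent of $W_0$ is immediate, since $W_0 \in \mathcal{G}_0$.

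The main obstacle is that the pointwise evaluation $L \mapsto L_r$ is not continuous on $\DistFunctions$ with the L\'evy topology, so the Portmanteau step requires each $r_j$ to be a continuity point of $L$ under $\xi(\omega)$. Uniformly in $\omega$ this can be arranged by a Fubini argument: since $\xi(\omega)$-a.s.\ $L$ has at most countably many jumps, $\E{\xi(\omega)(\{L_r > L_{r-}\})} = 0$ for Lebesgue-almost every $r$, and hence a countable dense set of good evaluation times $r_j$ can be chosen off a single null set of $\omega$'s, enabling the joint passage to the limit.
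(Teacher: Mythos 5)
Your proof is correct and runs on the same engine as the paper's: you test the increment against bounded product functionals of the path up to time $s$, compute the second moment of the resulting empirical integral, note that all off-diagonal terms vanish because the increments $B^i_t-B^i_s$ are independent of $\mathcal F^N_s$ (to which $L^N$, and hence the test functional, is adapted) and of one another, and are left with a diagonal term of order $1/N$; the identity then passes to the limit along the almost surely convergent (Skorokhod-represented) subsequence $\xi_N\to\xi$ from Corollary~\ref{cor:Phantomcorollary}. The only genuine difference is the endgame: the paper establishes the two martingale properties of $W-W_0$ and $(W-W_0)^2-t$ and invokes L\'evy's characterization, whereas you establish the conditional characteristic-function identity and conclude via the independent-Gaussian-increments definition of Brownian motion; the two are interchangeable here and cost the same. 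One point where you are in fact more careful than the paper: the evaluation $\ell\mapsto\ell_r$ is discontinuous on $\DistFunctions$ at jump times of $\ell$, and your Fubini selection of a countable dense set of evaluation times that are $\xi(\omega)$-a.s.\ continuity points, off a single null set of $\omega$, is exactly what is needed to legitimize the Portmanteau step, which the paper dispatches with a terse appeal to right-continuity. A cosmetic slip: under $\xi_N$ the first coordinate of a sample is $W^{i,N}=X^{i,N}_{0-}+B^i$, not $X^{i,N}$, so the test functional should read $h(W^{i,N}_{s_1},\dots,W^{i,N}_{s_m})$; this changes nothing, since both versions are $\mathcal F^N_s$-measurable and generate the same $\sigma$-field once $L^N$ is included.
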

\begin{proof}[Proof Sketch.]
We employ L\'evy's characterization of Brownian motion. 
%Letting $\mathcal{F}_t^N$ be the sigma-algebra generated by the first $N$ Brownian motions in the particle system up to time $t$, n
Let $g_i \in C_b(\R^2)$ for $i=1,\dots,n$ and
let $0 \leq s_1 < \dots < s_n \leq s < t$, and set $G(w,\ell) := \prod_{i=1}^n g(w_{s_i},\ell_{s_i})$. It holds that
\begin{align*}
\mathbb E\bigg[\left(\int_{\ExtendedE}(w_t-w_s)G(w,\ell)~d\EmpMext(w,\ell)\right)^2\bigg] &= \lim_{N\to\infty} \E{\left(\int_{\ExtendedE}(w_t-w_s)G(w,\ell)~d\EmpMext_N (w,\ell)\right)^2} \\ 
&= \lim_{N\to\infty} \frac{1}{N} \E{\left((B_t^1 - B_s^1) G(X_{0-}^{1,N}+B^{1},\mathsf{L}^N)\right)^2} = 0,
\end{align*}
Using (right-)continuity and the fact that the Borel $\sigma$-algebra on $\ExtendedE$ is generated by the evaluation mappings, 
we conclude that for almost every realization $\omega$, if $\law((W,\mathsf{L}))=\EmpMext(\omega)$, then $W-W_0$ is a
 martingale with respect to the filtration generated by $(W,\mathsf{L})$. Analogously, we can show that $(W_t-W_0)^2 -t$ is a martingale with respect to the filtration generated by $(W,\mathsf{L})$. By L\'evy's characterization of Brownian motion the claim follows.
\end{proof}

The next lemma shows that the limiting measures of particle 
systems such as \eqref{eq:particleN} satisfy the crossing property, 
rendering the loss functional continuous. This result is Lemma 5.9 in \cite{delarue2015particle}.

\begin{lemma}\label{lemma:Crossingpropertyclosed}
Suppose that $\law(\EmpM_N)\to\law(\EmpM)$ for some random variable $\mu$. Then  
$\EmpM$
  satisfies the crossing property \eqref{eq:crossingmu} almost surely.
\end{lemma}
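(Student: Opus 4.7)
The plan is to invoke Corollary~\ref{cor:Phantomcorollary} to realize $\EmpM$ (along a subsequence) as $\iota_{\alpha}(\EmpMext)$ for some random measure $\EmpMext$ on $\ExtendedE$, and then to combine the Brownian structure established in Lemma~\ref{lem:wBrownian} with the strong Markov property. Under the representation $\mu(\omega) = \iota_\alpha(\EmpMext(\omega))$, a typical path has the form $x_t = W_t - \alpha L_t$ with $(W,L)\sim\EmpMext(\omega)$, and Lemma~\ref{lem:wBrownian} says that for almost every $\omega$ the increment $W - W_0$ is a Brownian motion with respect to the filtration $\mathcal G$ generated by $(W,L)$.

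Fix such an $\omega$. I would first note that $\tau_0(x) = \inf\{t\geq 0 : x_t \leq 0\}$ is a $\mathcal G$-stopping time because $x = W - \alpha L$ is $\mathcal G$-adapted. By the strong Markov property the shifted process $\tilde B_s := W_{\tau_0+s} - W_{\tau_0}$ is a Brownian motion started at $0$, independent of $\mathcal G_{\tau_0}$. Next I would invoke the standard fact --- easily provable via Blumenthal's zero-one law --- that almost surely $\inf_{s\in [0,h]} \tilde B_s < 0$ simultaneously for every $h > 0$. Since $L$ is non-decreasing,
\begin{equation*}
x_{\tau_0 + s} - x_{\tau_0} = \tilde B_s - \alpha (L_{\tau_0+s} - L_{\tau_0}) \leq \tilde B_s,
\end{equation*}
so the infimum of $x_{\tau_0+\cdot} - x_{\tau_0}$ on $[0,h]$ is strictly negative almost surely. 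A Fubini-type argument would then exchange the sampling under $\EmpMext(\omega)$ with the outer probability and yield, for almost every $\omega$, the desired conclusion for $\mu(\omega)$.

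The main obstacle I anticipate is the careful handling of measurability and filtrations, specifically verifying that Brownian-motion technology (strong Markov property, Blumenthal's law) can be applied pathwise \emph{under the random law} $\EmpMext(\omega)$. This is precisely what Lemma~\ref{lem:wBrownian} supplies: for almost every realization $\omega$ the measure $\EmpMext(\omega)$ concentrates on pairs $(W,L)$ with $W - W_0$ Brownian in its own filtration, which is exactly what is needed to run the strong-Markov argument under each individual $\EmpMext(\omega)$ separately and then repackage the conclusion as an almost-sure statement about the random measure $\EmpM$.
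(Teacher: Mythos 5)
Your proposal is correct and follows essentially the same route as the paper: represent $\mu$ as $\iota_\alpha(\EmpMext)$ via Corollary~\ref{cor:Phantomcorollary}, invoke Lemma~\ref{lem:wBrownian} to get that $W-W_0$ is Brownian in the filtration generated by $(W,L)$, observe that $\tau_0$ is a stopping time for that filtration, use monotonicity of $\ell$ to reduce to the Brownian increment, and apply the strong Markov property; the Fubini exchange you describe is exactly the paper's computation of $\mathbb E[\mu(\cdot)]=0$. The only cosmetic difference is that you get all $h>0$ at once via Blumenthal's zero-one law, whereas the paper argues for each fixed $h$ and then takes a countable sequence $h_n\downarrow 0$.
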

\begin{proof}
Recall that by \eqref{eqn11} we have
$
\law(\EmpM) =\law( \iota_{\alpha}(\EmpMext)).
$

Fixing $h>0$ we can then compute
\begin{align*}
&\mathbb E\big[ \mu\big(\big\{x \in D([-1,\infty)) \colon \inf_{0\leq s\leq h} (x_{\tau_0+s} - x_{\tau_0}) = 0  \big\}\big)\big] \\
&\quad=\mathbb E\big[ \xi\big(\big\{(w,\ell) \in \ExtendedE\colon \inf_{0\leq s\leq h} [(w_{\tau_0+s} - w_{\tau_0}) -\alpha (\ell_{\tau_0+s} - \ell_{\tau_0})] = 0 \big\}\big)\big] \\
&\quad\leq\mathbb E\big[ \xi\big(\big\{(w,\ell) \in \ExtendedE\colon \inf_{0\leq s\leq h} (w_{\tau_0+s} - w_{\tau_0}) = 0 \big\}\big)\big],
\end{align*}
where the inequality is due to the fact that $\ell$ is increasing. 
Observe that for almost every realization $\omega$, if $\law((W,L))=\EmpMext(\omega)$, then 
$\tau_0 = \tau_0(\iota_{\alpha}(W,L))$ is a stopping time with respect to the filtration generated by $(W,L)$. Since  
$W-W_0$ is a Brownian motion with respect to the same filtration by Lemma~\ref{lem:wBrownian},
 the strong Markov property yields
\begin{align*}
&\mathbb E\big[ \xi\big(\big\{(w,\ell) \in \ExtendedE\colon \inf_{0\leq s\leq h} (w_{\tau_0+s} - w_{\tau_0}) = 0 \big\}\big)\big] 
= \mathbb P\big(\inf_{0\leq s \leq h} B_{s} = 0\big)= 0.
\end{align*}
We have shown that 
$$\mu \big(\big\{ x \in D([-1,\infty)): \inf_{0\leq s\leq h} (x_{\tau_0+s} - x_{\tau_0}) = 0 \big\}\big) = 0$$
holds almost surely. Repeating this reasoning for $h=h_n$ for a sequence $(h_n)_{n\in \N}$ such that $h_n > 0$ and $\lim_{n\to\infty} h_n = 0$ yields the result. 
\end{proof}

The next result shows that weak limits of laws pertaining to the particle
system \eqref{eq:particleN} correspond to laws of solution processes to the McKean--Vlasov problem \eqref{eq:mckeanproblem} and was first shown as Theorem 4.4 in \cite{delarue2015particle}. We present an alternative proof.

\begin{proposition}\label{thm:finitedimconvergence} For $N \in \N$, let $(X^N,\L^N)$ be a solution to the particle system 
\begin{equation}\label{eqn6}
\left\{\begin{aligned}
X_{t}^{i,N} &= X_{0-}^{i,
N} + B_{t}^{i} - \L_t^N \\
\tau_{i,N} &= \inf\{t \geq 0: X_{t}^{i,N} \leq 0\} \\
\L_{t}^N &= \frac{\alpha}{N}\sum_{i=1}^{N} \ind{\left[\tau_{i,N}\leq t\right]}, 
\end{aligned}\right.
\end{equation}
and 
let $(\EmpM_N)_{N\in\N}$ denote the corresponding empirical measures \eqref{eqn20}.
Suppose that for some random variable  $\EmpM$ and some measure $\lawXnull \in \mathcal{P}(\R)$ we have
$$
\lim_{N\to\infty} \frac{1}{N}\sum_{i=1}^{N}\delta_{X_{0-}^{i,N}} = \lawXnull,
$$
and  $\law(\EmpM_N) \to \law(\EmpM)$ along some subsequence. Then 
%$\LawEmpM_{\infty}$-almost every $\mu \in \mathcal{P}(D([-1,\infty))$ 
$\mu$ coincides almost surely with the law of a solution process to the
McKean--Vlasov problem \eqref{eq:mckeanproblem} for $\law(X_{0-})=\lawXnull$.
\end{proposition}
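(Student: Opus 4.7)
My plan is to stitch together Corollary~\ref{cor:Phantomcorollary}, Lemma~\ref{lemma:Crossingpropertyclosed}, Lemma~\ref{lemma:muellconvergence} and Lemma~\ref{lem:wBrownian} to identify the limit $\EmpM$ as the law of a solution process to the McKean--Vlasov equation. First, along the subsequence provided in the hypothesis, apply Corollary~\ref{cor:Phantomcorollary} to obtain random measures $\EmpMext_N, \EmpMext$ on $\ExtendedE$ defined on a common probability space such that $\EmpMext_N \to \EmpMext$ and $\iota_\alpha(\EmpMext_N) \to \iota_\alpha(\EmpMext)$ almost surely; by the Skorokhod coupling we may moreover assume $\EmpM = \iota_\alpha(\EmpMext)$ almost surely. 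The structural remark is that, by the very definition of the particle system, $L^N = \langle \EmpM_N, \ellfunc\rangle$, and since the loss function $L^N$ is common to all particles, the pushforward of $\EmpMext_N$ onto its second coordinate is the Dirac mass $\delta_{L^N} = \delta_{\langle \EmpM_N,\ellfunc\rangle}$.

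Next, by Lemma~\ref{lemma:Crossingpropertyclosed}, almost every realization of $\EmpM$ satisfies the crossing property \eqref{eq:crossingmu}. Consequently Lemma~\ref{lemma:muellconvergence} applies pathwise and yields the almost sure convergence
\[
\langle \EmpM_N, \ellfunc\rangle \longrightarrow \langle \EmpM, \ellfunc\rangle \quad\text{in }\DistFunctions.
\]
Since the projection $\EmpMext \mapsto \EmpMext(C([0,\infty))\times\cdot)$ onto the second marginal is continuous from $\mathcal{P}(\ExtendedE)$ to $\mathcal{P}(\DistFunctions)$, we conclude that the second marginal of $\EmpMext$ is the Dirac mass $\delta_{\Linfmin}$, where $\Linfmin := \langle \EmpM, \ellfunc\rangle$. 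Equivalently, for almost every realization $\omega$, if $\law((W,L)) = \EmpMext(\omega)$, then $L$ is the deterministic function $\Linfmin(\omega)$.

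Third, Lemma~\ref{lem:wBrownian} shows that under $\EmpMext(\omega)$, the process $W - W_0$ is a Brownian motion independent of $W_0$. To identify the distribution of $W_0$, note that the evaluation map $\EmpMext \mapsto \EmpMext\circ(w,\ell)\mapsto w_0)^{-1}$ into $\mathcal{P}(\R)$ is continuous, and that the corresponding marginal of $\EmpMext_N$ equals $\frac{1}{N}\sum_{i=1}^N \delta_{X_{0-}^{i,N}}$, which converges to $\lawXnull$ by hypothesis; hence $W_0$ is distributed according to $\lawXnull$. Setting $X_t := W_t - \alpha \Linfmin_t$ and $\tau := \inf\{t \geq 0 : X_t \leq 0\}$, we have $X = \iota_\alpha(W,\Linfmin)$, so $\law(X) = \EmpM(\omega)$, and the identity $\Linfmin = \langle \EmpM, \ellfunc\rangle$ reads exactly $\Linfmin_t = \P{\tau \leq t}$. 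Thus $\EmpM$ is almost surely the law of a solution process to \eqref{eq:mckeanproblem} with initial distribution $\lawXnull$.

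The main obstacle is step two: the convergence $\EmpMext_N\to\EmpMext$ does not by itself pin down the second marginal of $\EmpMext$, and we need both the crossing property (to use Lemma~\ref{lemma:muellconvergence}) and the particle-system identity $L^N = \langle\EmpM_N,\ellfunc\rangle$ to collapse the limiting second marginal to a Dirac on $\langle\EmpM,\ellfunc\rangle$. Once this is done, Lemma~\ref{lem:wBrownian} identifies the first coordinate as a Brownian motion started from the correct distribution, and the defining identity for the McKean--Vlasov equation falls out.
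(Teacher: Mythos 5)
Your proposal is correct and follows the paper's proof in all essential respects: Skorokhod representation on $\ExtendedE$ via Corollary~\ref{cor:Phantomcorollary}, the crossing property from Lemma~\ref{lemma:Crossingpropertyclosed} feeding into Lemma~\ref{lemma:muellconvergence}, identification of the driving noise via Lemma~\ref{lem:wBrownian}, and the initial law via the continuous evaluation $(w,\ell)\mapsto w_0$. The one place where you genuinely diverge is the identification of the second marginal of $\EmpMext$: the paper proves $\E{\int_{\ExtendedE}|\ell_t-\langle\EmpM,\ellfunc_t\rangle|\,\mathrm{d}\EmpMext(w,\ell)}=0$ by a chain of limit interchanges, restricting to $t\notin J$ so that $\ell\mapsto\ell_t$ is continuous for $\EmpMext$-a.e.\ $\ell$, and then upgrades by right-continuity; you instead observe that the second marginal of $\EmpMext_N$ is exactly $\delta_{L^N}=\delta_{\langle\EmpM_N,\ellfunc\rangle}$, that Lemma~\ref{lemma:muellconvergence} gives $\langle\EmpM_N,\ellfunc\rangle\to\langle\EmpM,\ellfunc\rangle$ in $\DistFunctions$ almost surely, and that the second-marginal pushforward and $x\mapsto\delta_x$ are continuous, so uniqueness of weak limits pins the marginal down to $\delta_{\langle\EmpM,\ellfunc\rangle}$. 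This is a cleaner execution of the same step and avoids the continuity-point bookkeeping; the only point worth making explicit is that the identity ``second marginal of $\EmpMext_N$ equals $\delta_{\langle\iota_\alpha(\EmpMext_N),\ellfunc\rangle}$'' is a measurable, almost-sure property of the original empirical measures that transfers to the representation sequence through equality in law (the paper relies on the same transfer). A small cosmetic remark: reusing the symbol $\Linfmin$ for $\langle\EmpM,\ellfunc\rangle$ clashes with the paper's reserved notation for the minimal solution and should be renamed.
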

\begin{proof} 
Without loss of generality we may  assume that $\EmpM_N = \iota_{\alpha}(\EmpMext_N)$ (and thus \eqref{eqn21}) and set $\EmpM = \iota_{\alpha}(\EmpMext)$.
Observe that the map 
$
t \mapsto \E{\langle \mu, \ellfunc_t \rangle}
$
is increasing, and therefore has at most countably many discontinuities, and the same holds for the map $t \mapsto \E{\int \ell_t ~d\EmpMext(w,\ell)}$. Let $J$ be the set of discontinuities of these maps and fix $t \notin J$. Then   
$
\langle \mu, \ellfunc_{t-} \rangle = \langle \mu, \ellfunc_t \rangle
$
almost surely.
By Lemma \ref{lemma:Crossingpropertyclosed}  we can apply Lemma \ref{lemma:muellconvergence} to obtain that 
\begin{align}\label{eq:muellconvergence}
\lim_{N\to\infty} \langle \EmpM_N, \ellfunc_t \rangle = \langle \EmpM, \ellfunc_t \rangle
\end{align}
holds almost surely. In the next three steps we prove that $\EmpM$ coincides almost surely with the law of a solution process to \eqref{eq:mckeanproblem}. 
\\

\underline{Step 1:} We show that for almost every realization $\omega$, if $\law((W,L))=\EmpMext(\omega)$, then $L\equiv\langle \mu(\omega), \ellfunc \rangle$ almost surely. Note that we have
\begin{align*}
\E{\int_{\ExtendedE}\left|\left|\ell_t - \langle\EmpM,\ellfunc_t\rangle\right|-\left|\ell_t - \langle\EmpM_N,\ellfunc_t\rangle\right|\right|~\mathrm{d}\EmpMext_N (w,\ell)} 
&\leq \E{\left|\langle\EmpM,\ellfunc_t\rangle - \langle\EmpM_N,\ellfunc_t\rangle\right|}
\end{align*}
which vanishes as $N \to \infty$ by \eqref{eq:muellconvergence} and the dominated convergence theorem. By choice of $t \notin J$, the map ${\ell \to \ell_t}$ is continuous from 
$\DistFunctions$ to $\R$ for $\xi$-almost every $\ell$, and it follows that
\begin{align*}
\E{\int_{\ExtendedE}|\ell_t - \langle \EmpM, \ellfunc_t \rangle|~\mathrm{d}\EmpMext(w,\ell)} 
&= \lim_{N\to\infty}\E{\int_{\ExtendedE}|\ell_t - \langle \EmpM, \ellfunc_t \rangle|~\mathrm{d}\EmpMext_N(w,\ell)} \\
&= \lim_{N\to\infty} \E{\int_{\ExtendedE}|\ell_t - \langle \EmpM_N, \ellfunc_t \rangle|~\mathrm{d}\EmpMext_N(w,\ell)}.
\end{align*} 
Since  $\langle \mu_N, \ellfunc \rangle = \ell$ holds for $\EmpMext_N$-almost every $\ell \in \DistFunctions$, almost surely, we can conclude that this expression is equal to 0.
The claim follows by letting $t$ range through a countable dense subset of $[0,\infty)\setminus J$ and using right-continuity.\\

\underline{Step 2:} We show that for almost every realization $\omega$, if $\law((W,\mathsf{L}))=\EmpMext(\omega)$, then $\law(W_{0})=\lawXnull$. To that end, let that 
the evaluation map $\ExtendedEproj_0$ defined on $\ExtendedE$ be given by $(w,\ell) \mapsto w_0$. Since $\ExtendedEproj_0$  is a continuous map, letting $\ExtendedEproj_0 (\EmpMext)$ denote the pushforward of $\EmpMext$ by $\ExtendedEproj_0$, the continuous mapping 
theorem shows
\begin{align*}
\ExtendedEproj_0(\EmpMext) = \lim_{N\to\infty} \ExtendedEproj_0(\EmpMext_N)
= \lim_{N\to\infty} \frac{1}{N}\sum_{i=1}^{N}\delta_{X_{0-}^{i,N}} = \lawXnull.
\end{align*}

\underline{Step 3:} We conclude by showing that 
for almost every realization  $\omega$, if $\law(X)=\EmpM(\omega)$, then $\law(X_{0-})=\lawXnull$ and $X-X_{0-}+\alpha\langle \EmpM(\omega),\ellfunc\rangle$ is a Brownian motion independent of $X_{0-}$.
%of $\EmpM$,
%it holds that the law of $x_{0-}$ under $\EmpM$ is $\lawXnull$, and $x-x_{0-}+\langle \EmpM,\ellfunc\rangle$ is a Brownian motion (independent of $x_{0-}$) under $\EmpM$. 
Since $\iota_{\alpha}(\EmpMext) = \EmpM$, by Step 1 we know that if $\law((W,\mathsf{L}))=\EmpMext(\omega)$, then
$$\law((W_0,W-W_0))=\law((X_{0-},X-X_{0-}+\alpha\langle \EmpM(\omega),\ellfunc\rangle)).$$
The claim now follows by Step 2 and Lemma~\ref{lem:wBrownian}.
\end{proof}

\section{Identification of the limit}
\label{sec:identification}
Having proved convergence of the particle system to solutions of the McKean--Vlasov problem, we are now interested in characterizing the limit points of minimal and physical solutions of the particle system.
We shed some light on this intricate topic by either perturbing the initial condition of the particle system (Section~\ref{sec61}) or of the McKean--Vlasov problem (Section~\ref{sec:62}).

\subsection{The perturbed particle system}\label{sec61}
The next result is a key technical result for the theorems in this section.

\begin{proposition}\label{thm:optproblemconv}
Fix $\alpha>0$, $\gamma \in (0,1/2)$, and let $\Lpertmin^{N}$ denote the perturbed minimal solution to the particle system, which for $N \in \N$ is
the minimal solution to the perturbed particle system
\begin{equation}\label{eq:pert_particleN}
\left\{\begin{aligned}
X_{t}^{i,N} &= X_{0-}^{i} + N^{-\gamma} + B_{t}^{i} - \L_t^N \\
\tau_{i,N} &= \inf\{t \geq 0: X_{t}^{i,N} \leq 0\} \\
\L_{t}^N &= \frac{\alpha}{N}\sum_{i=1}^{N} \ind{\left[\tau_{i,N}\leq t\right]}.
\end{aligned}\right.
\end{equation}
If $\Linfmin$ is the minimal solution to the McKean--Vlasov problem \eqref{eq:mckeanproblem}, then
\begin{equation}\label{eq:pertasymptoticinequality}
\limsup_{N\to\infty} \E{\Lpertmin^{N}_t } \leq \Linfmin_t,
\end{equation}
holds for every $t > 0$.
\end{proposition}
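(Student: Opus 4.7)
The plan is to construct a deterministic supersolution $\tilde{\ell}$ of the perturbed operator $\tilde{\Gamma}_N$ (the natural analogue of $\Gamma_N$ for system \eqref{eq:pert_particleN}) that lies only slightly above $\Linfmin$, and then to use the monotone fixed-point iteration characterization of the minimal solution (the perturbed analogue of Lemma~\ref{lem:miniteration}) to sandwich $\Lpertmin^N \leq \tilde{\ell}$ pathwise on a high-probability event. Taking expectations will then yield the claim.

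Concretely, I would first note that shifting every initial condition by $+\alpha N^{-\gamma}$ is equivalent to shifting the barrier downward by $N^{-\gamma}$, i.e.\ $\tilde{\Gamma}_N[L] = \Gamma_N[L - N^{-\gamma}]$ for every \cadlag $L$. Setting $\tilde{\ell} := \Linfmin + N^{-\gamma}$ therefore gives
\[
\tilde{\Gamma}_N[\tilde{\ell}]_t \;=\; \Gamma_N[\Linfmin]_t \;=\; \frac{1}{N}\sum_{i=1}^{N}\ind{[\tau_i \leq t]},
\]
where $\tau_i := \inf\{s \geq 0 : X_{0-}^i + B_s^i \leq \alpha\Linfmin_s\}$ are i.i.d.\ with cumulative distribution $\Linfmin$, thanks to the fixed-point relation $\Gamma[\Linfmin] = \Linfmin$ from Proposition~\ref{lem:miniterationinfty}. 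Applying the Dvoretzky--Kiefer--Wolfowitz inequality to this empirical CDF yields
\[
\P{\sup_{t\geq 0}\bigl|\tilde{\Gamma}_N[\tilde{\ell}]_t - \Linfmin_t\bigr| > N^{-\gamma}} \;\leq\; 2\,e^{-2 N^{1-2\gamma}},
\]
which tends to $0$ as $N \to \infty$ because $\gamma < 1/2$. On the complementary event $A_N$ one has $\tilde{\Gamma}_N[\tilde{\ell}]_t \leq \Linfmin_t + N^{-\gamma} = \tilde{\ell}_t$ for every $t$; that is, $\tilde{\ell}$ is a pathwise supersolution of the perturbed iteration on $A_N$.

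From here, monotonicity of $\tilde{\Gamma}_N$ combined with a routine induction starting from $0 \leq \tilde{\ell}$ gives $\tilde{\Gamma}_N^{(k)}[0] \leq \tilde{\ell}$ on $A_N$ for every $k$, and because the perturbed analogue of Lemma~\ref{lem:miniteration} identifies $\Lpertmin^N$ with $\tilde{\Gamma}_N^{(N)}[0]$, this yields $\Lpertmin^N \leq \tilde{\ell}$ on $A_N$. Taking expectations then produces
\[
\E{\Lpertmin^N_t} \;\leq\; \Linfmin_t + N^{-\gamma} + \P{A_N^c}
\]
for every $t > 0$, and both error terms vanish as $N \to \infty$, proving \eqref{eq:pertasymptoticinequality}. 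The main obstacle, and the reason for the hypothesis $\gamma \in (0,1/2)$, is the need for the supersolution inequality to hold \emph{uniformly in $t$}: a pointwise concentration bound would not survive the induction, so one requires uniform concentration of an empirical cumulative distribution function around its limit at rate $N^{-\gamma}$. DKW delivers exactly this at the sharp scale $N^{-1/2}$, and aligning this scale with the initial-condition perturbation $\alpha N^{-\gamma}$ is what makes the argument close.
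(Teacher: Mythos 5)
Your proof is correct, and it takes a genuinely more direct route than the paper. The paper sets up an auxiliary variational problem: it minimizes the penalized cost $c_N(\mu) = \langle \mu,\ellfunc_t\rangle + N^{\beta}\|\Gamma_N[\langle\mu,\ellfunc\rangle]-\langle\mu,\ellfunc\rangle\|_{\infty}$ over all $\mathcal{P}(E)$-valued random variables, bounds the optimal value $V_t^N$ from above by $\Linfmin_t + O(N^{\beta-1/2})$ by plugging in the constant candidate $\lawmin$ and invoking Dvoretzky--Kiefer--Wolfowitz, and then bounds $\E{\Lpertmin^N_t}$ from above by $V_t^N + o(1)$ by running the monotone iteration against near-optimizers $\nearopt_N$ on the event where their penalty term is at most $N^{-\gamma}$ (which requires $\beta=\gamma+\eps$, a Markov-inequality step, and a careful construction of the probability space in Example~\ref{ex1} to guarantee measurability of $\omega\mapsto c_N(\mu(\omega))$). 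You collapse these two steps into one by observing that the only candidate the paper ever actually evaluates, namely $\Linfmin$ itself, already serves directly: via the identity $\GammaNpert[\ell]=\Gamma_N[\ell-N^{-\gamma}]$ and the fixed-point relation $\Gamma[\Linfmin]=\Linfmin$, DKW makes $\Linfmin+N^{-\gamma}$ a pathwise supersolution of $\GammaNpert$ on an event of probability at least $1-2e^{-2N^{1-2\gamma}}$, and the same monotone induction plus Lemma~\ref{lem:miniteration} gives $\Lpertmin^N\leq\Linfmin+N^{-\gamma}$ there. Your version is shorter, yields an exponentially small (rather than polynomially small) exceptional probability, and sidesteps the measurability discussion entirely since your supersolution is deterministic; you are also right that the uniform-in-$t$ concentration from DKW is exactly what lets the bound survive the iteration, and that $\gamma<1/2$ is what aligns the perturbation with the DKW scale. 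The one thing the paper's variational framing buys is a template that transfers with only a change of cost functional to Proposition~\ref{prop:optprobconvmckean}, where the perturbation sits on the McKean--Vlasov side, though your supersolution argument adapts there as well with $\tilde\ell=\Linfmin(\alpha N^{-\gamma})+N^{-\gamma}$.
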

\begin{proof}
The key idea of this proof is to compare both $\Linfmin$ and $(\Lpertmin^N)_{N\in \N}$ to a sequence of optimizers 
of suitable optimization problems. To that end, let $\varepsilon > 0$ be such that $\beta := \gamma + \varepsilon < 1/2$. 
Set $\PErandom$ to be the set of all random variables $\mu : \Omega \rightarrow \mathcal{P}(E)$. Fix $t>0$ and define the cost functional
$$
c_N(\mu) := \langle \mu,\ellfunc_t \rangle + N^{\beta} \|\Gamma_N[\langle \mu,\ellfunc\rangle] - \langle \mu,\ellfunc\rangle\|_{\infty},
$$
where $\Gamma_N$ is the operator introduced in \eqref{eqdef:GammaN} and $\|\cdot\|_{\infty}$ is the uniform norm on $[0,\infty)$.
For $t \geq 0$, let $V_t^N$ be the optimal value 
\begin{equation}
\label{eq:optn}
V_t^N := \inf_{\mu \in \PErandom} \E{c_N(\mu)}.
\end{equation}
The definition \eqref{eq:optn} depends on the underlying probability space $(\Omega,\mathcal{F},\mathbb{P})$. However, our comparison argument works for any choice of $(\Omega,\mathcal{F},\mathbb{P})$ as long as $\EmpM_N \in \mathcal{R}$ for all $N \in \N$ and for any $\mu \in \mathcal{R}$, the map $\omega \mapsto c_N(\mu(\omega))$ is measurable. As \eqref{eq:pertasymptoticinequality} does not depend on the choice of probability space, it is sufficient to carry out the argument for a specific choice of $(\Omega,\mathcal{F},\mathbb{P})$. For the construction of a probability space satisfying the required properties see Example~\ref{ex1} in Section~\ref{appC} in the appendix.\\

 \underline{Step 1:} We show that asymptotically, $\Linfmin$ is an upper bound for $\alpha V_t^N$. Set $\Lmin_t := \frac{1}{\alpha}\Linfmin_t$. Taking $\mu$ to be constant and equal to the law of the minimal solution process of \eqref{eq:mckeanproblem}, that is $\mu(\omega) = \lawmin$, we find
$$
V_t^N \leq \Lmin_t + N^{\beta} \E{\|\Gamma_N[\Lmin] - \Lmin\|_{\infty}}.
$$
Note that by definition, $\Gamma_N [\Lmin]$ is the $N$-th empirical distribution function associated to the iid sequence $\tau^1[\Lmin],\tau^2 [\Lmin],\dots$, where
$$
\tau^{i}[\Lmin] := \inf\{t \geq 0: X_{0-}^{i}+B_t^{i} - \Linfmin_t \leq 0\}, \quad i \in \N, 
$$
and that $\P{\tau^{i}[\Lmin]\leq t}=\Lmin_t$ for each $t$.
%the exact distribution function for this sequence is given by $\Linfmin$. 
By the 
Dvoretzky-Kiefer-Wolfowitz inequality (cf Corollary 1 in \cite{massart1990tight}), it holds for $x > 0$ that
$$
\P{\sqrt{N}\|\Gamma_N[\Lmin] - \Lmin\|_{\infty} > x} \leq 2\exp(-2x^2).
$$
It therefore follows that
\begin{align*}
 \E{N^{\beta}\|\Gamma_N[\Lmin] - \Lmin\|_{\infty}} &= \int_{0}^{\infty} \P{N^{\beta}\|\Gamma_N[\Lmin] - \Lmin\|_{\infty} > x}~\mathrm{d}x \\
 &= \int_{0}^{\infty} \P{N^{\nicefrac{1}{2}}\|\Gamma_N[\Lmin] - \Lmin\|_{\infty} > N^{\nicefrac{1}{2}-\beta}x}~\mathrm{d}x \\
 &\leq 2 \int_{0}^{\infty}\exp(-2N^{1-2\beta}x^2)~\mathrm{d}x \\
 &= \sqrt{\frac{\pi}{2}}N^{\beta -\nicefrac{1}{2}}.
\end{align*}
As $\beta < 1/2$, this quantity vanishes as $N \to \infty$, which proves
$$
\limsup_{N\to\infty} \alpha V_t^N \leq \Linfmin_t.
$$
\underline{Step 2:} We show that asymptotically, $\limsup_{n\to\infty}\mathbb E[\Lpertmin_t^N]\leq \limsup_{n\to\infty}\alpha V_t^N$. Consider a sequence of $\mathcal{P}(E)$-valued random variables $(\nearopt_N)_{N\in \N}$ such that
$$
\E{c_N(\nearopt_N)} - V_t^N \leq \frac{1}{N}, \quad N \in \N.
$$
Plugging the empirical measure associated to the solution of the
$N$-particle system into the objective function in \eqref{eq:optn} yields $V_t^N \leq 1$ and consequently
\begin{equation}\label{eq:Enearoptbound}
\E{N^{\beta} \|\Gamma_N[\langle \nearopt_N,\ellfunc\rangle] - \langle \nearopt_N,\ellfunc\rangle\|_{\infty}} \leq 1+\frac{1}{N}, \quad N \in \N.
\end{equation}
Let $A_N := [\|\Gamma_N[\langle \nearopt_N,\ellfunc\rangle] - \langle \nearopt_N,\ellfunc\rangle\|_{\infty} \leq N^{-\gamma}]$, then the Markov inequality together with \eqref{eq:Enearoptbound} yield
\begin{equation}\label{eq:PANcbound}
\P{A_N^c} \leq N^{\gamma-\beta}(1 + \frac{1}{N}) \leq 2N^{-\varepsilon}.
\end{equation}
For the last step, in analogy to \eqref{eqdef:GammaN}, define the operator $\GammaNpert$ via
$$
\left\{\begin{aligned}
\Xpert_{t}^{i,N}[\ell] &:= X_{0-}^{i} + \alpha N^{-\gamma} + B_{t}^{i} - \alpha \ell_t \\
\taupert_{i,N}[\ell] &:= \inf\{t \geq 0: \Xpert_{t}^{i,N}[\ell] \leq 0\} \\
\GammaNpert[\ell]_t &:= \frac{1}{N}\sum_{i=1}^{N} \ind{\left[\taupert_{i,N}[\ell]\leq t\right]}.
\end{aligned}\right.
$$
On the set $A_N$, we have by definition
\begin{equation}\label{eq:ANinequality}
\langle \nearopt_N, \ellfunc_t \rangle \geq \Gamma_N [\langle \nearopt_N, \ellfunc\rangle]_t - N^{-\gamma}, \quad t \geq 0.
\end{equation}
As $\langle \nearopt_N,\ellfunc\rangle$ is nonnegative and $\Gamma_N$ is monotone, this implies for all $t \geq 0$
\begin{align*}
\langle \nearopt_N, \ellfunc_t \rangle \geq \Gamma_N [-N^{-\gamma}]_t - N^{-\gamma} = \GammaNpert[0]_t-N^{-\gamma}
\end{align*}
Using the monotonicity of $\Gamma_N$ and applying \eqref{eq:ANinequality} again, this leads to
\begin{align*}
\langle \nearopt_N, \ellfunc_t \rangle \geq \Gamma_N[\GammaNpert[0]-N^{-\gamma}]_t-N^{-\gamma} = \GammaNpert^{(2)}[0]_t - N^{-\gamma}
\end{align*}
for all $t \geq 0$. A straightforward induction shows that
\begin{align*}
\langle \nearopt_N, \ellfunc_t \rangle \geq \GammaNpert^{(k)}[0]_t - N^{-\gamma}, \quad t \geq 0
\end{align*}
holds for all $k \in \N$ on the event $A_N$. Lemma~\ref{lem:miniteration} shows that $\alpha\GammaNpert^{(N)}[0] = \Lpertmin^N$
and so, finally we obtain that on $A_N$ we have
\begin{align}\label{eq:Lpertmuellinequality}
\alpha \langle \nearopt_N, \ellfunc\rangle \geq \Lpertmin^{N} - \alpha N^{-\gamma}.
\end{align}
As $\Lpertmin^{N}$ is bounded by $\alpha$, we find
\begin{align}\label{eq:LpertANasy}
0\leq\mathbb E\big[\Lpertmin^N\big] - \mathbb E\big[\Lpertmin^N\ind{A_N}\big] = \mathbb E\big[\Lpertmin^N\ind{A_N^c}\big] \leq \alpha\P{A_N^c}
\end{align}
which goes to zero as $N \to \infty$ by \eqref{eq:PANcbound}. Conditions
\eqref{eq:LpertANasy} and \eqref{eq:Lpertmuellinequality}  yield
\begin{align*}
\limsup_{N\to\infty} \mathbb E\big[\Lpertmin^{N}_t\big] &= \limsup_{N\to\infty} \mathbb E\big[\Lpertmin^{N}_t \ind{A_N}\big] \leq \limsup_{N\to\infty}\mathbb E\big[\alpha\langle\nearopt_N,\ellfunc_t \rangle\ind{A_N}\big]  \\
&\leq  \limsup_{N\to\infty}\alpha\E{c_N(\nearopt_N)} = \limsup_{N\to\infty} \alpha V_t^N.
\end{align*}
Combining Step 1 and Step 2 then yields the result.
\end{proof}

By virtue of Proposition~\ref{thm:optproblemconv}, we are now in a position to 
prove a propagation of chaos result for the perturbed particle system.
We obtain this result without imposing any restrictions on the distribution
of the initial condition $X_{0-}$.

\begin{theorem}[Propagation of chaos, perturbed]\label{thm:pertpropagation}
Fix $\gamma \in (0,1/2)$ and define $(\Xpertmin^N,\Lpertmin^N)$ to be the 
minimal solution to the perturbed particle system \eqref{eq:pert_particleN}. Let $\lawmin$ be the law of the minimal solution process $\Xmin$ to the
McKean--Vlasov problem \eqref{eq:mckeanproblem}. Then, it holds that
$$
\lim_{N\to\infty} \frac{1}{N}\sum_{i=1}^{N}\delta_{\Xpertmin^{i,N}} = \lawmin
$$
in probability on $\mathcal{P}(D([-1,\infty))$. Furthermore, the sequence of loss functions $(\Lpertmin^N)_{N\in \N}$ converges to $\Linfmin$ in probability with respect to the L\'evy-metric, i.e., for every $\varepsilon > 0$ it holds that
$$
\lim_{N\to\infty} \P{d_L (\Lpertmin^N,\Linfmin) > \varepsilon} = 0,
$$
where $d_L$ denotes the L\'evy-metric.
\end{theorem}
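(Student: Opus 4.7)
The plan is to combine the tightness framework of Section~\ref{sec:tightness}, the limit-point characterization of Proposition~\ref{thm:finitedimconvergence}, and the asymptotic upper bound of Proposition~\ref{thm:optproblemconv} to pin down every subsequential limit of the perturbed empirical measures as $\lawmin$, and then to upgrade almost-sure convergence along subsequences to convergence in probability via the standard argument for deterministic limits.

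First, I would verify tightness of $\EmpMpert_N := \frac{1}{N}\sum_{i=1}^{N}\delta_{\Xpertmin^{i,N}}$ on $\mathcal{P}(D([-1,\infty)))$. Since the perturbed initial conditions $(X_{0-}^i + \alpha N^{-\gamma})_{i=1}^N$ are iid, hence $N$-exchangeable, with tight first marginal, Corollary~\ref{cor:tightness} and Corollary~\ref{cor:Phantomcorollary} apply. Passing to a subsequence and using Skorokhod representation, I may assume $\EmpMpert_N \to \EmpMpert$ almost surely for some random measure $\EmpMpert$. Because $\frac{1}{N}\sum_{i=1}^N \delta_{X_{0-}^i + \alpha N^{-\gamma}}$ converges almost surely to $\lawXnull$ (law of large numbers plus a vanishing deterministic shift), Proposition~\ref{thm:finitedimconvergence} gives that $\EmpMpert$ is almost surely the law of a solution process to \eqref{eq:mckeanproblem}, with associated loss function $\langle \EmpMpert, \ellfunc \rangle$. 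By minimality of $\Linfmin$, this yields the almost-sure pathwise lower bound
$$\langle \EmpMpert, \ellfunc_t \rangle \geq \Linfmin_t, \qquad t \geq 0.$$

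Next, I would obtain a matching upper bound in expectation. By Lemma~\ref{lemma:Crossingpropertyclosed}, $\EmpMpert$ satisfies the crossing property almost surely, so Lemma~\ref{lemma:muellconvergence} gives $\langle \EmpMpert_N, \ellfunc_t \rangle \to \langle \EmpMpert, \ellfunc_t \rangle$ almost surely for all $t$ in a co-countable set. Since $\Lpertmin_t^N = \langle \EmpMpert_N, \ellfunc_t \rangle$ is bounded by $1$, dominated convergence combined with Proposition~\ref{thm:optproblemconv} yields
$$\mathbb{E}[\langle \EmpMpert, \ellfunc_t \rangle] = \lim_{N\to\infty} \mathbb{E}[\Lpertmin_t^N] \leq \Linfmin_t.$$
Combining with the pathwise lower bound forces $\langle \EmpMpert, \ellfunc_t \rangle = \Linfmin_t$ almost surely at each such $t$, and right-continuity upgrades this to $\langle \EmpMpert, \ellfunc \rangle = \Linfmin$ almost surely in $\DistFunctions$. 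Once the loss functional is identified as the deterministic $\Linfmin$, the solution process is reconstructed pathwise via $X = X_{0-} + B - \alpha \Linfmin$ (Step~3 of the proof of Proposition~\ref{thm:finitedimconvergence}), so $\EmpMpert = \lawmin$ almost surely. The standard subsequence argument (every subsequence admits a further subsequence with a.s.\ convergence to the same deterministic limit $\lawmin$) promotes this to convergence of $\EmpMpert_N \to \lawmin$ in probability on $\mathcal{P}(D([-1,\infty)))$. The L\'evy-metric convergence $\Lpertmin^N = \langle \EmpMpert_N, \ellfunc \rangle \to \Linfmin$ in probability then follows by exactly the same subsequential argument, since convergence in $\DistFunctions$ is metrized by $d_L$.

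The main obstacle is the two-sided sandwiching of $\langle \EmpMpert, \ellfunc \rangle$ around $\Linfmin$: the almost-sure lower bound from minimality of $\Linfmin$ among McKean--Vlasov solutions versus the in-expectation upper bound supplied by Proposition~\ref{thm:optproblemconv}. Reconciling them requires continuity of the loss functional along the subsequentially converging empirical measures, which is precisely Lemma~\ref{lemma:muellconvergence} and rests on the crossing property for $\EmpMpert$ (Lemma~\ref{lemma:Crossingpropertyclosed}); everything after this point is soft machinery built on uniqueness of the identified deterministic loss function.
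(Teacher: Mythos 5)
Your proposal is correct and follows essentially the same route as the paper: tightness via Corollary~\ref{cor:tightness}, identification of subsequential limits as solution laws via Proposition~\ref{thm:finitedimconvergence}, and the two-sided sandwich of $\langle \EmpMpert, \ellfunc\rangle$ between the almost-sure lower bound from minimality and the in-expectation upper bound from Proposition~\ref{thm:optproblemconv}, followed by the standard subsequence argument for a deterministic limit. The only cosmetic difference is that you invoke Lemma~\ref{lemma:Crossingpropertyclosed} and Lemma~\ref{lemma:muellconvergence} explicitly where the paper writes ``arguing as in the proof of Proposition~\ref{thm:finitedimconvergence}''; the content is identical.
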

\begin{proof}
We follow the classical sequence of arguments, showing tightness of the empirical measures first, then their convergence to the law of a solution's process to the McKean--Vlasov problem, and finally we check that the corresponding solution is minimal. \\

\underline{Step 1:} Since the sequence $(X_{0-}^{1} + N^{-\gamma})_{N\in\N}$ is tight,  tightness of the empirical measures follows from Corollary~\ref{cor:tightness}.\\

\underline{Step 2:} Let $\EmpMpert_N$ be the empirical measures associated to the minimal solution to \eqref{eq:pert_particleN}. Then by Step 1 there is a random variable $\EmpM$ such that, after passing to a subsequence if necessary, $\law(\EmpMpert_N) \to \law(\EmpM)$. Moreover, because of Varadarajan's theorem (cf \cite[Theorem 11.4.1]{dudley2018real}) and the fact that uniformly continuous functions are convergence determining (cf. Proposition 3.4.4 in \cite{ethier2009markov}) we can establish that 
$$
\lim_{N\to\infty} \frac{1}{N} \sum_{i=1}^{N} \delta_{ X_{0-}^{i}+N^{-\gamma}} = \lawXnull
$$
holds almost surely in $\mathcal{P}(\R)$.
Proposition~\ref{thm:finitedimconvergence} then shows that  $\mu$ coincides almost surely with the law of a solution process to the McKean--Vlasov problem \eqref{eq:mckeanproblem}. \\

\underline{Step 3:} We now  show that $\mu=\lawmin$ almost surely, i.e.~that $\mu$ coincides with law of the minimal solution process to \eqref{eq:mckeanproblem}. 
Let $J \subseteq [0,\infty)$ be the countable set of discontinuities 
of the increasing map 
$
t \mapsto \E{ \langle \mu, \ellfunc_t \rangle}
$ and fix $t\notin J$.
 Then, arguing as in the proof of Proposition~\ref{thm:finitedimconvergence}, we obtain that 
$
\lim_{N\to\infty}\langle \EmpMpert_N, \ellfunc_t \rangle = \langle \mu, \ellfunc_t \rangle
$
almost surely. To be precise the convergence holds for a subsequence of a representative sequence of $(\EmpMpert_N)_{n\in\N}$ but we can assume without loss of generality that it coincides with the original one. Letting $D$ be a countable, dense subset of $[0,\infty)$ with $D \cap J = \emptyset$, 
we then have that
\begin{equation}\label{eqn7}
\lim_{N\to\infty}\langle \EmpMpert_N, \ellfunc_t \rangle = \langle \mu, \ellfunc_t \rangle, \quad t \in D
\end{equation}
holds almost surely.
By Step 2 and the definition of minimal solution, we have $\langle \mu, \ellfunc \rangle \geq \Linfmin$ almost surely. The dominated convergence theorem and Proposition~\ref{thm:optproblemconv} imply
\begin{align*}
 \E{\alpha\langle \mu, \ellfunc_t\rangle} = \lim_{N\to\infty}\E{\alpha\langle \EmpMpert_N, \ellfunc_t\rangle} = \lim_{N\to\infty} \mathbb E\big[\Lpertmin_t^N\big] \leq \Linfmin_t, \quad t \in D.
\end{align*}
We conclude that $\P{\Linfmin_t = \alpha\langle \mu, \ellfunc_t \rangle, ~t \in D} = 1$ and therefore we have $\Linfmin = \alpha\langle \mu, \ellfunc \rangle$ almost surely by right-continuity. It follows that
$
\mu = \lawmin$ almost surely.
\\

 \underline{Step 4:} By \eqref{eqn7} and right continuity we know that $\law(\Lpertmin^N) \to \law(\alpha\langle \mu, \ellfunc \rangle)$. Since $\law(\langle \mu, \ellfunc \rangle)=\delta_{\Linfmin/\alpha}$ by Step 3, we can conclude that $\Lpertmin^N \to \Linfmin$
 in probability on $\DistFunctions$.  
\end{proof}

 Similarly to the situation in the particle system, it turns out that physical solutions to the McKean--Vlasov problem have minimal jumps, which is the content of the following proposition.

\begin{proposition}\label{thm:physjumpinequality}
Suppose $(X,\tau,\L)$ solves \eqref{eq:mckeanproblem}. Then it holds that
\begin{align*}
\Delta \L_t \geq \alpha\inf\{x > 0 \colon \P{\tau \geq t, X_{t-} \in [0,\alpha x]} < x \}
\end{align*}
for any $t \geq 0$.
\end{proposition}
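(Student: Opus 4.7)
Setting $y := \Delta \L_t$, my first move would be to establish the fixed-point identity $y = F(y)$, where $F(x) := \mathbb P(\tau \geq t,\ X_{t-} \in [0, \alpha x])$. Indeed, the dynamics force $X_t = X_{t-} - \alpha y$, so the event $\{\tau = t\}$ coincides with $\{\tau \geq t,\ X_{t-} \leq \alpha y\}$, and $\Delta \L_t = \mathbb P(\tau = t) = F(y)$. The map $F$ is non-decreasing and right-continuous in $x$. Writing $y_* := \inf\{x > 0 : F(x) < x\}$, I would check using right-continuity and monotonicity that $F(y_*) = y_*$ and $F(x) \geq \min(x, y_*)$ for every $x > 0$; the first follows by taking limits of a minimising sequence, the second by splitting on whether $x < y_*$ (where $F(x) \geq x$ by definition of the infimum) or $x \geq y_*$ (where monotonicity gives $F(x) \geq F(y_*) = y_*$).

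The target is $y \geq y_*$, which I would attack by contradiction, assuming $y < y_*$ and setting $D := y_* - y > 0$. For each $s > t$, let $M_{s,t} := -\inf_{u \in [t,s]}(B_u - B_t) \geq 0$ and let $u^* \in [t, s]$ be the argmin of $B_u$. Since $\L_{u^*} \geq \L_t = \L_{t-} + y$, the inequality $X_{t-} \leq \alpha y + M_{s,t}$ yields $X_{0-} + B_{u^*} = X_{t-} + \alpha\L_{t-} - M_{s,t} \leq \alpha \L_{u^*}$, hence $X_{u^*} \leq 0$ and $\tau \leq s$. This produces the sufficient-condition inclusion
\[
\{\tau \geq t,\ X_{t-} \leq \alpha y + M_{s,t}\}\ \subseteq\ \{\tau \in [t, s]\}.
\]
Taking probabilities and exploiting the independence of $M_{s,t}$ from $\mathcal F_t$ yields
\[
\L_s - \L_{t-} \;\geq\; \mathbb E\bigl[F\bigl(y + M_{s,t}/\alpha\bigr)\bigr] \;\geq\; y + \mathbb E\bigl[\min(M_{s,t}/\alpha,\ D)\bigr],
\]
equivalently $\eta(s) := \L_s - \L_t \geq \mathbb E[\min(M_{s,t}/\alpha,\ D)]$, a quantity strictly positive for every $s > t$.

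The remaining task is to convert this bound into a contradiction with the right-continuity $\eta(s) \to 0$ as $s \downarrow t$, and this is the main obstacle: the naive estimate $\eta(s) \gtrsim \sqrt{s - t}/\alpha$ is itself consistent with right-continuity. My plan is to bootstrap the bound by re-injecting it into the sufficient condition. Once the growth estimate $\L_u - \L_t \geq g(u - t)$ is known on a right-neighbourhood of $t$, one may replace the lower bound $\L_{u^*} \geq \L_t$ by $\L_{u^*} \geq \L_t + g(u^* - t)$, enlarging the admissible threshold to $\alpha y + \alpha g(u^* - t) + M_{s,t}$ and producing the improved estimate $\eta(s) \geq \mathbb E[\min(g(u^* - t) + M_{s,t}/\alpha,\ D)]$, with $u^*$ distributed according to the arcsine law on $[t, s]$. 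The crux is to show that this self-improving recursion, driven by the super-diagonal condition $F(x) \geq x$ on $(y, y_*)$, does not settle at a finite constant but instead forces $\eta(s) \geq D$ on some right-neighbourhood of $t$, directly contradicting right-continuity. A parallel attack, using the endpoint $u = s$ to replace $M_{s,t}$ by a Gaussian variable $W := B_s - B_t$ and invoking Mills-ratio asymptotics for $\mathbb E[F(y + \eta(s) - W/\alpha)]$, gives the contradiction directly in the regime $y < y_*/2$; extending to the full range $y < y_*$ is where the bootstrap (or an analogous sharpening) is indispensable.
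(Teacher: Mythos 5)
The paper offers no proof of this proposition at all --- it simply cites Proposition 1.2 of \cite{hambly2019mckean} --- so your attempt has to be judged on its own terms. The preparatory part is correct: the fixed-point identity $\Delta\Lambda_t=F(\Delta\Lambda_t)$, the facts $F(y_*)=y_*$ and $F(x)\ge\min(x,y_*)$ for $x>0$, the inclusion $\{\tau\ge t,\ X_{t-}\le\alpha y+M_{s,t}\}\subseteq\{t\le\tau\le s\}$ obtained at the argmin using $\Lambda_{u^*}\ge\Lambda_t$, and the consequent bound $\Lambda_s-\Lambda_t\ge\mathbb E[\min(M_{s,t}/\alpha,D)]$. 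Your endpoint variant is also sound: with $\psi(z):=\phi(z)-z(1-\Phi(z))=\int_z^\infty(1-\Phi(u))\,\mathrm{d}u$ strictly decreasing, the inequality $\Lambda_s-\Lambda_{t-}\ge\mathbb E[\min((\Lambda_s-\Lambda_{t-})-W/\alpha,\,y_*)^+]$ reduces to $\psi(z)\le\psi(\kappa)$ with $z,\kappa$ proportional to $\Lambda_s-\Lambda_{t-}$ and $y_*-(\Lambda_s-\Lambda_{t-})$ respectively, giving exactly $\Delta\Lambda_t\ge y_*/2$ --- half of what is claimed.

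The gap is precisely the step you flag as ``the crux'', and it is not merely unfinished: the bootstrap as you have set it up cannot close. At the relevant scales your recursion reads $g_{k+1}(s-t)\approx\mathbb E[g_k(u^*-t)]+\mathbb E[M_{s,t}]/\alpha$ with $u^*-t=(s-t)U$, $U$ arcsine-distributed; since $\mathbb E[\sqrt U]=2/\pi<1$, the ansatz $g_k(u)=c_k\sqrt u$ gives $c_{k+1}=\tfrac{2}{\pi}c_k+\tfrac{1}{\alpha}\sqrt{2/\pi}$, which converges to the finite constant $\tfrac{1}{\alpha}\sqrt{2/\pi}\,(1-\tfrac{2}{\pi})^{-1}$. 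The iterated bound therefore stabilises at $O(\sqrt{s-t})$ and never reaches the threshold $D=y_*-y$, so no contradiction with right-continuity emerges. The underlying difficulty is that the gain from the super-diagonal property $F(x)\ge x$ is exactly critical (each extra unit of loss raises the barrier by $\alpha$, killing at least one more unit of mass), while every time-localisation you use (argmin, endpoint) pays a constant factor strictly less than one, turning a critical cascade into a subcritical one. Some genuinely different device is needed to convert criticality into divergence --- for instance, iterating along the deterministic passage times $\theta_x:=\inf\{r\ge t:\Lambda_r-\Lambda_{t-}\ge x\}$ of the loss through a fine grid of levels in $(y,y_*)$, so that the full barrier level $\alpha x$ is available when the estimate at level $x$ is made. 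As written, the proposal establishes $\Delta\Lambda_t\ge y_*/2$, not the proposition.
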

\begin{proof}
See Proposition 1.2 in \cite{hambly2019mckean}.
\end{proof}

It has been established in previous works that weak limits of physical solutions of the particle system \eqref{eq:particleN}
correspond to physical solutions to the McKean--Vlasov problem \eqref{eq:mckeanproblem}. Recall that we call a solution
$(X,\L)$ to \eqref{eq:mckeanproblem} physical, if it satisfies the physical jump condition \eqref{eqdef:physicaljump}.
\begin{theorem}[Physical solutions converge to physical solutions]
\label{thm:physconvergetophys}
Let $(\Xphys^N,\Lphys^N)$ be a physical solution to the particle system \eqref{eqn6}, where $X_{0-}^{i,N} = X_{0-}^{i}+a_N$ for some deterministic sequence $(a_N)_{N\in \N}$ converging to $0$ and some iid sequence $(X_{0-}^i)_{i\in\N}$ such that ${\mathbb E[X^i_{0-}] < \infty}$. Then, if for some $\Linfphys \in \DistFunctions$ we almost surely have $\Lphys^N \to \Linfphys$ along some subsequence  in $\DistFunctions$, it follows that $\Linfphys$ is a physical
solution to the McKean--Vlasov problem \eqref{eq:mckeanproblem}.
\end{theorem}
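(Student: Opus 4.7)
The plan is to split the argument into two parts: (i) identifying $\Linfphys$ as a solution to the McKean--Vlasov problem \eqref{eq:mckeanproblem}, and (ii) verifying the physical jump condition \eqref{eqdef:physicaljump}.

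For (i) I would mimic the first part of the proof of Theorem~\ref{thm:pertpropagation}. Corollary~\ref{cor:tightness} yields tightness of the empirical measures $\EmpM_N := \frac{1}{N}\sum_{i=1}^N \delta_{\Xphys^{i,N}}$, so after extracting a further subsequence one may assume $\law(\EmpM_N) \to \law(\mu)$ for some random measure $\mu$. Proposition~\ref{thm:finitedimconvergence} then identifies $\mu$ almost surely as the law of a solution process $X$ to \eqref{eq:mckeanproblem}, and the argument of Step~1 in the proof of Theorem~\ref{thm:pertpropagation} gives $\Linfphys = \langle \mu, \ellfunc \rangle$ almost surely. Consequently $(X,\tau,\Linfphys)$ is a solution of the McKean--Vlasov problem.

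For (ii), Proposition~\ref{thm:physjumpinequality} already provides $\Delta \Linfphys_t \geq f(t)$ with $f(t) := \inf\{x>0 \colon \P{\tau \geq t, X_{t-} \in [0,\alpha x]} < x\}$, so only the reverse inequality requires work. Fix $t>0$ and assume for contradiction $\Delta \Linfphys_t > y > f(t)$, and pick $x_0 \in (f(t),y)$ with $\P{\tau \geq t, X_{t-} \in [0,\alpha x_0]} < x_0$. Using the $M_1$-convergence of $\Lphys^N$ to $\Linfphys$, I would select a sequence of particle jump times $t_N \to t$ whose jump sizes approximate $\Delta \Linfphys_t$; for $N$ large one can then choose $k_N/N \in (x_0, y)$ with $k_N/N < \Delta \Lphys_{t_N}^N$. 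The discrete physical condition \eqref{eq:physicalparticlejump} at the particle level gives $\lawstopped_{t_N-}^{N}([0,\alpha k_N/N]) > k_N/N$. The key step is then to pass to the limit and derive $\P{\tau \geq t, X_{t-}\in[0,\alpha x_0]} \geq x_0$, contradicting the choice of $x_0$. Here the integrability $\E{X_{0-}^i} < \infty$ enters to preclude loss of mass in the limit.

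The main obstacle is precisely this limit-passage. $M_1$-convergence of $\Lphys^N$ does not directly determine values at individual jump times, so the sequence $(t_N)$ must be chosen by exploiting the $M_1$-structure of jumps, namely that a jump of the limit is reconstructed as a limit of jumps of the prelimits. Once $(t_N)$ is chosen, the required weak convergence of $\lawstopped_{t_N-}^{N}$ to the subprobability measure $\P{\tau \geq t, X_{t-} \in \cdot}$ follows from the convergence $\EmpM_N \to \mu$ combined with the crossing property of Lemma~\ref{lemma:Crossingpropertyclosed}, which ensures continuity of the relevant path functionals at $\mu$-a.e.\ path.
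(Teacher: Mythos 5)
Part (i) of your plan matches the paper's argument. The gap is in part (ii), at exactly the step you flag as the ``main obstacle'': your plan to select single jump times $t_N\to t$ with $\Delta\Lphys^N_{t_N}$ approximating $\Delta\Linfphys_t$, and then to invoke the discrete condition \eqref{eq:physicalparticlejump} at $t_N$, cannot work under $M_1$-convergence. Your guiding principle that ``a jump of the limit is reconstructed as a limit of jumps of the prelimits'' is a $J_1$-phenomenon, not an $M_1$-phenomenon: in $M_1$ a macroscopic jump of $\Linfphys$ at $t$ may be assembled from $\bigO(N)$ jumps of $\Lphys^N$, each of the minimal size $1/N$, spread over a shrinking window around $t$ (this is precisely Example~\ref{ex:J1notconv}, and it is the generic situation for the particle system). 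In that case no single $t_N$ carries a jump $\Delta\Lphys^N_{t_N}$ exceeding any fixed $x_0>0$, so there is no $k_N/N\in(x_0,y)$ with $k_N/N<\Delta\Lphys^N_{t_N}$, and \eqref{eq:physicalparticlejump} applied at one time gives you nothing.

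The paper's remedy is Lemma~\ref{lem:physparticleestimate}: one chains the condition \eqref{eq:physicalparticlejump} over \emph{all} jump times $t_1,\dots,t_m$ of $\Lphys^N$ in a window $[t,t+\e]$, decomposing $k\le N(\Lphys^N_{t+\e}-\Lphys^N_{t-})$ as $k_1+\dots+k_m$ with $k_i\le N\Delta\Lphys^N_{t_i}$. Since the surviving particles drift and diffuse between the $t_i$'s, the chained inequality only holds up to error terms controlled by $\sup_{h\le\e}|B^i_{t+h}-B^i_t|$ and $\e(1+\sup_{s\le t+\e}|\Xphys^{i,N}_s|)$; this is where $\E{X_{0-}}<\infty$ actually enters (via a first-moment bound on $\sup_s|\Xphys^{i,N}_s|$ and Markov's inequality), not to ``preclude loss of mass.'' The result is the quantitative bound
\begin{align*}
\P{\lawstopped_{t-}^{N}([0,\alpha z + C \e^{1/3}]) \geq z ~~ \forall z \leq \Lphys_{t+\e}^{N} - \Lphys_{t-}^{N} - C \e^{1/3}} \geq 1 - C\e^{1/3},
\end{align*}
which survives the $M_1$-limit because it only refers to the increment $\Lphys^N_{t+\e}-\Lphys^N_{t-}$ over the window, not to individual jumps. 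One then sends $N\to\infty$ for fixed $\e$ (using the weak convergence of $\lawstoppedphys^N_{t-}$, which your proposal handles correctly) and afterwards $\e\to 0$ using left-continuity of $s\mapsto\lawstoppedphys_{s-}$. Without an interval version of the physical condition of this kind, the contradiction you aim for cannot be reached.
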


\begin{proof}
See Section~\ref{appB} in the appendix.
\end{proof}

The next theorem gives a positive answer to a conjecture of Delarue, Nadtochiy and Shkolnikov.

\begin{theorem}\label{thm:minimalisphysical} Suppose that $\E{X_{0-}} < \infty.$ Then, the minimal solution $(\Xmin, \Linfmin)$ to the McKean--Vlasov problem \eqref{eq:mckeanproblem} is physical.

\end{theorem}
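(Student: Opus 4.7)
The plan is to combine the perturbed propagation of minimality (Theorem~\ref{thm:pertpropagation}) with the fact that weak limits of physical solutions of the particle system are physical (Theorem~\ref{thm:physconvergetophys}). These two results together identify the same limit in two different ways, which forces the minimal solution to carry the physical structure.

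More precisely, I would fix $\gamma \in (0,1/2)$ and consider the perturbed particle system \eqref{eq:pert_particleN}, whose initial conditions $X_{0-}^{i} + \alpha N^{-\gamma}$ are of the form $X_{0-}^{i} + a_N$ with $a_N = \alpha N^{-\gamma} \to 0$. Let $\Lpertmin^N$ denote its minimal solution. By Lemma~\ref{lemma:physicalminimal}, $\Lpertmin^N$ is \emph{also} the physical solution of \eqref{eq:pert_particleN}; this is precisely the identification that allows us to cash in Theorem~\ref{thm:physconvergetophys} on the one hand and Theorem~\ref{thm:pertpropagation} on the other. The integrability hypothesis $\E{X_{0-}} < \infty$ of our theorem is exactly what is needed to apply Theorem~\ref{thm:physconvergetophys}.

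Next, Theorem~\ref{thm:pertpropagation} yields $\Lpertmin^N \to \Linfmin$ in probability with respect to the L\'evy metric on $\DistFunctions$. Passing to a subsequence along which this convergence is almost sure (which is always possible for convergence in probability in a separable metric space), Theorem~\ref{thm:physconvergetophys} applies to the physical solutions $\Lpertmin^N$ and guarantees that the almost sure limit is a physical solution of the McKean--Vlasov problem \eqref{eq:mckeanproblem}. Since this limit must coincide (almost surely, hence deterministically, as both are deterministic) with $\Linfmin$, we conclude that $\Linfmin$ is physical.

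There is no real obstacle; all the hard work is already encoded in Theorem~\ref{thm:pertpropagation} (whose proof depends on the optimization-problem comparison of Proposition~\ref{thm:optproblemconv}) and Theorem~\ref{thm:physconvergetophys}. The only point to verify is the compatibility of the two limit statements, which reduces to the trivial extraction of an almost surely convergent subsequence from a sequence converging in probability with a deterministic limit. Once this bookkeeping is done, the physical jump condition \eqref{eqdef:physicaljump} for $\Linfmin$ follows directly from the corresponding property of the limit produced by Theorem~\ref{thm:physconvergetophys}.
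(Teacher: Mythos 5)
Your proposal is correct and follows exactly the paper's own argument: apply Theorem~\ref{thm:pertpropagation} to get $\Lpertmin^N \to \Linfmin$ in probability, use Lemma~\ref{lemma:physicalminimal} to identify $\Lpertmin^N$ as the physical solution of the perturbed system, pass to an almost surely convergent subsequence, and invoke Theorem~\ref{thm:physconvergetophys} (which is where $\E{X_{0-}}<\infty$ is used) to conclude that the limit $\Linfmin$ is physical.
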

\begin{proof}
Let $(\Xpertmin^N,\Lpertmin^N)$ be the minimal solution to the perturbed particle system \eqref{eq:pert_particleN}. By Theorem~\ref{thm:pertpropagation} we know that $\Lpertmin^N \to \Linfmin$ in probability on $\DistFunctions$. We also know by Lemma~\ref{lemma:physicalminimal} that $\Lpertmin^N$
is a physical solution to the perturbed particle system. After passing to a 
subsequence if necessary, we may assume that $\Lpertmin^N \to \Linfmin$ almost surely on $\DistFunctions$. Theorem~\ref{thm:physconvergetophys} then shows that $\Linfmin$ is a physical solution to the McKean--Vlasov problem \eqref{eq:mckeanproblem}. 
\end{proof}

\begin{theorem}[Propagation of minimality]\label{cor:propmin}
Suppose that $\E{X_{0-}} < \infty $ and that the physical solution to the McKean--Vlasov problem \eqref{eq:mckeanproblem} is unique. Denote by $(\Xmin^N, \Linfmin^N)$ the 
minimal solution to the particle system \eqref{eq:particleN}. Let $\lawmin$ be the law of the minimal solution process $\Xmin$ to the
McKean--Vlasov problem \eqref{eq:mckeanproblem}. Then, it holds that
\begin{align}\label{eqn2}
\lim_{N\to\infty} \frac{1}{N}\sum_{i=1}^{N}\delta_{\Xmin^{i,N}} = \lawmin
\end{align}
in probability on $\mathcal{P}(D([-1,\infty))$. Furthermore, the sequence of loss functions $(\Linfmin^N)_{N\in \N}$ converges to $\Linfmin$ in probability with respect to the L\'evy-metric, i.e., for every $\varepsilon > 0$ it holds that
$$
\lim_{N\to\infty} \P{d_L (\Linfmin^N,\Linfmin) > \varepsilon} = 0,
$$
where $d_L$ denotes the L\'evy-metric.
\end{theorem}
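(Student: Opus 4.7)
The plan is to combine the tightness result from Corollary~\ref{cor:tightness}, the limit identification from Proposition~\ref{thm:finitedimconvergence}, and the physicality assertions in Lemma~\ref{lemma:physicalminimal}, Theorem~\ref{thm:physconvergetophys}, and Theorem~\ref{thm:minimalisphysical}, then invoke the assumed uniqueness of physical solutions to collapse every subsequential limit to the same deterministic object. Set $\mu_N := \frac{1}{N}\sum_{i=1}^{N}\delta_{\Xmin^{i,N}}$. By Corollary~\ref{cor:tightness} the sequence $(\mu_N)_{N\in\N}$ is tight on $\mathcal{P}(D([-1,\infty)))$, so from any subsequence I can extract a further subsequence along which $\law(\mu_N)$ converges to $\law(\mu)$ for some $\mathcal{P}(D([-1,\infty)))$-valued random variable $\mu$. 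Since $(X_{0-}^{i})_{i\in\N}$ are i.i.d.\ with law $\nu_{0-}$, the empirical initial distributions converge almost surely to $\nu_{0-}$, so Proposition~\ref{thm:finitedimconvergence} applies and shows that $\mu$ almost surely coincides with the law of some solution process to the McKean--Vlasov problem \eqref{eq:mckeanproblem}; denote the associated loss function by $\L := \langle \mu, \ellfunc\rangle$.

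The crucial step is to identify $\L$ as $\Linfmin$. Mimicking Step~1 of the proof of Proposition~\ref{thm:finitedimconvergence} (via the Skorokhod representation theorem and Lemma~\ref{lemma:muellconvergence}), I obtain, along a further subsequence, that $\Lmin^N = \langle \mu_N, \ellfunc\rangle \to \L$ almost surely in $\DistFunctions$. By Lemma~\ref{lemma:physicalminimal} each $\Lmin^N$ is a physical solution of the particle system, so Theorem~\ref{thm:physconvergetophys} (applied with $a_N \equiv 0$, using $\mathbb{E}[X_{0-}]<\infty$) shows that $\L$ is a physical solution to the McKean--Vlasov equation. The assumed uniqueness of the physical solution, combined with the fact that $\Linfmin$ is itself physical by Theorem~\ref{thm:minimalisphysical}, forces $\L = \Linfmin$. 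Since the law of the solution process is fully determined by $\nu_{0-}$ and the loss function through the representation $X_t = X_{0-} + B_t - \alpha\Linfmin_t$, I conclude $\mu = \lawmin$ almost surely. Thus every subsequence of $(\law(\mu_N))$ has a further subsequence converging to the Dirac mass at the deterministic point $\lawmin$, whence $\mu_N \to \lawmin$ in probability on $\mathcal{P}(D([-1,\infty)))$, proving \eqref{eqn2}. Feeding the same subsequential argument into the identity $\Lmin^N = \langle \mu_N,\ellfunc\rangle$ yields the claimed convergence $\Lmin^N \to \Linfmin$ in probability with respect to $\Levymetric$.

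The main technical delicacy is that Theorem~\ref{thm:physconvergetophys} requires the convergence $\Lmin^N \to \L$ to hold \emph{almost surely} in $\DistFunctions$ along a subsequence, which is why I rely on Skorokhod representations when extracting subsequential limits; the transfer back to convergence in probability of the original sequence is then a routine application of the subsequence principle, exploiting that the limit $\lawmin$ is deterministic.
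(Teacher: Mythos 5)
Your proposal is correct and follows essentially the same route as the paper: tightness via Corollary~\ref{cor:tightness}, identification of subsequential limits as solution laws via Proposition~\ref{thm:finitedimconvergence}, physicality of the limit via Lemma~\ref{lemma:physicalminimal} and Theorem~\ref{thm:physconvergetophys}, and then the assumed uniqueness together with Theorem~\ref{thm:minimalisphysical} to pin the limit down as $\lawmin$. The extra care you take with Skorokhod representations and the subsequence principle merely makes explicit what the paper's terser argument leaves implicit.
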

\begin{proof}
We argue again in the classical way. Tightness of the empirical measures \eqref{eqn2} follows from Corollary~\ref{cor:tightness} and their convergence to the law of a solution process of the McKean--Vlasov problem \eqref{eq:mckeanproblem} follows by Proposition~\ref{thm:finitedimconvergence}. We know from Lemma~\ref{lemma:physicalminimal} that minimal solutions of the particle system are physical, which allows us to apply Theorem~\ref{thm:physconvergetophys}, telling us that the limit must be the unique physical solution to the McKean--Vlasov problem. By Theorem~\ref{thm:minimalisphysical}, the physical solution must be equal to the minimal solution.
\end{proof}

\begin{remark}
By \cite[Theorem 1.4]{delarue2019global}, if the initial condition $X_{0-}$ admits a bounded Lebesgue density that changes monotonicity finitely often on compact subsets of $[0,\infty)$, the physical solution to the McKean--Vlasov problem \eqref{eq:mckeanproblem} is unique.
\end{remark}

\subsection{The perturbed McKean--Vlasov problem}\label{sec:62}

A modification of the proof of Proposition \ref{thm:optproblemconv}
allows us to ``shift'' the perturbation from the particle system to the 
McKean--Vlasov problem.

\begin{proposition}\label{prop:optprobconvmckean}
Fix $\alpha>0$, $\gamma \in (0,1/2)$, and for $x\in \R$ let  $\Linfmin(x)$ denote the minimal solution of the perturbed McKean--Vlasov problem
\begin{equation}\label{eq:mckeanproblempert}
\left\{
\begin{aligned}
X_t &= X_{0-} -x+ B_t - \L_t \\ 
\tau &= \inf\{t \geq 0: X_t \leq 0 \} \\
\Lambda_t &=\alpha\P{\tau \leq t}.
\end{aligned}\right.
\end{equation}
If $\Linfmin^{N}$ is the minimal solution to the particle system \eqref{eq:particleN}, then
\begin{equation}\label{eq:pertasymptoticinequality2}
\limsup_{N\to\infty} \E{\Linfmin^{N}_t } \leq \limsup_{N\to\infty} \Linfmin_t( N^{-\gamma}),
\end{equation}
for every $t > 0$.
\end{proposition}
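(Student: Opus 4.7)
The plan is to mirror the two-step argument of Proposition~\ref{thm:optproblemconv}, now interchanging the roles of the particle system and the McKean--Vlasov equation. Write $x_N:=\alpha N^{-\gamma}$ and, in analogy with $\GammaNpert$, denote by $\Gamma_N^{(x_N)}$ the particle operator obtained by replacing each $X_{0-}^{i}$ with $X_{0-}^{i}-x_N$ in \eqref{eqdef:GammaN}, and by $\Gamma^{(x_N)}$ the analogous McKean--Vlasov operator associated to \eqref{eq:mckeanproblempert}. The critical algebraic fact is that shifting the initial condition down by $x_N$ is equivalent to shifting the loss argument up by $N^{-\gamma}$:
\[
\Gamma^{(x_N)}[\ell]=\Gamma[\ell+N^{-\gamma}]\quad\text{and}\quad\Gamma_N^{(x_N)}[\ell]=\Gamma_N[\ell+N^{-\gamma}].
\]

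Fix $\varepsilon>0$ with $\beta:=\gamma+\varepsilon<1/2$ and introduce the cost
\[
c_N(\mu):=\langle\mu,\ellfunc_t\rangle+N^{\beta}\,\bigl\|\Gamma_N^{(x_N)}[\langle\mu,\ellfunc\rangle]-\langle\mu,\ellfunc\rangle\bigr\|_\infty,\qquad V_t^N:=\inf_{\mu\in\PErandom}\E{c_N(\mu)}.
\]
For Step~1 I would plug in the deterministic law $\lawmin^{(x_N)}$ of the minimal solution process of \eqref{eq:mckeanproblempert}. Then $\langle\lawmin^{(x_N)},\ellfunc\rangle=\Linfmin(x_N)$ is a fixed point of $\Gamma^{(x_N)}$, so $\Gamma_N^{(x_N)}[\Linfmin(x_N)]$ is an $N$-sample empirical cdf drawn from an iid sample whose common cdf is $\Linfmin(x_N)$. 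The Dvoretzky--Kiefer--Wolfowitz estimate, applied exactly as in Step~1 of Proposition~\ref{thm:optproblemconv}, gives $N^\beta\E{\|\Gamma_N^{(x_N)}[\Linfmin(x_N)]-\Linfmin(x_N)\|_\infty}=O(N^{\beta-1/2})=o(1)$, so $\limsup_N V_t^N\leq\limsup_N\Linfmin_t(x_N)$.

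For Step~2 I would select a near-optimizer $(\zeta_N)_{N\in\N}$ with $\E{c_N(\zeta_N)}-V_t^N\leq 1/N$. The trivial bound $V_t^N\leq 1$ combined with Markov's inequality shows, as in Proposition~\ref{thm:optproblemconv}, that the event $A_N:=\{\|\Gamma_N^{(x_N)}[\langle\zeta_N,\ellfunc\rangle]-\langle\zeta_N,\ellfunc\rangle\|_\infty\leq N^{-\gamma}\}$ has probability at least $1-2N^{-\varepsilon}$. On $A_N$ set $\psi:=\langle\zeta_N,\ellfunc\rangle+N^{-\gamma}\geq 0$. The defining inequality of $A_N$ rearranges, via the shift identity, into $\psi\geq\Gamma_N^{(x_N)}[\psi-N^{-\gamma}]=\Gamma_N[\psi]$; combining this with $\psi\geq 0$ and iterating the monotonicity of $\Gamma_N$ yields $\psi\geq\Gamma_N^{(k)}[0]$ for every $k\in\N$, and taking $k=N$ together with Lemma~\ref{lem:miniteration} produces $\langle\zeta_N,\ellfunc_t\rangle\geq\Lmin^N_t-N^{-\gamma}$ on $A_N$. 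The standard split $\E{\Lmin^N_t}=\E{\Lmin^N_t\ind{A_N}}+\E{\Lmin^N_t\ind{A_N^c}}$ together with $\Lmin^N\leq 1$ then gives $\E{\Lmin^N_t}\leq\E{c_N(\zeta_N)}+o(1)\leq V_t^N+o(1)$, and passing to the $\limsup$ yields the claim.

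The main obstacle is engineering the cost functional so that the iteration in Step~2 collapses onto the unperturbed minimal solution $\Lmin^N$ rather than onto some shifted particle minimal: this is precisely what the shift identity $\Gamma_N^{(x_N)}[\ell]=\Gamma_N[\ell+N^{-\gamma}]$ arranges, with the $+N^{-\gamma}$ slack exactly compensating the slack allowed by the near-optimality. A minor technicality is that $X_{0-}-x_N$ may take negative values, but existence of $\Linfmin(x)$ for any $x\in\R$ is guaranteed by the proof of Proposition~\ref{lem:miniterationinfty}, which nowhere uses positivity of the initial condition.
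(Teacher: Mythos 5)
Your proof is correct and follows essentially the same route as the paper: the paper's own proof consists precisely of rerunning Proposition~\ref{thm:optproblemconv} with the cost functional $c_N(\mu)=\langle\mu,\ellfunc_t\rangle+N^{\beta}\|\Gamma_N[\langle\mu,\ellfunc\rangle+N^{-\gamma}]-\langle\mu,\ellfunc\rangle\|_{\infty}$, which coincides with yours via your shift identity $\Gamma_N^{(x_N)}[\ell]=\Gamma_N[\ell+N^{-\gamma}]$. The details you supply (Dvoretzky--Kiefer--Wolfowitz applied to the fixed point $\Linfmin(x_N)$ in Step~1, and the iteration collapsing onto the unperturbed $\Lmin^N$ in Step~2) are exactly what the paper's ``analogous'' leaves implicit.
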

\begin{proof}
The proof is analogous to the proof of Proposition \ref{thm:optproblemconv}, only that now we consider the sequence of cost functionals
\begin{align*}
c_N(\mu) := \langle \mu,\ellfunc_t \rangle + N^{\beta} \|\Gamma_N[\langle \mu,\ellfunc\rangle+N^{-\gamma}] - \langle \mu,\ellfunc\rangle\|_{\infty}
\end{align*}
where $\beta = \gamma + \varepsilon$.
\end{proof}

Shifting the perturbation to the McKean--Vlasov problem allows us to show
that propagation of minimality holds true for Lebesgue almost every (fixed) additive perturbation of the initial condition.

\begin{theorem}[Propagation of chaos, almost everywhere]\label{thm:almostpropagation}
For $x \in \R$, let $(\Xmin^N(x),\Linfmin^N(x))$ be the 
minimal solution to the particle system
$$
\left\{\begin{aligned}
X_{t}^{i,N} &= X_{0-}^{i}-x + B_{t}^{i} - \L_t^N \\
\tau_{i,N} &= \inf\{t \geq 0: X_{t}^{i,N} \leq 0\} \\
\L_{t}^N &= \frac{\alpha}{N}\sum_{i=1}^{N} \ind{\left[\tau_{i,N}\leq t\right]},
\end{aligned}\right.
$$
 Furthermore, let $\lawmin(x)$ be the law of the minimal solution process $\Xmin(x)$ to the perturbed
McKean--Vlasov problem \eqref{eq:mckeanproblempert}. Then, there is a co-countable set $D\subseteq \R$ such that
$$
\lim_{N\to\infty} \frac{1}{N}\sum_{i=1}^{N}\delta_{\Xmin^{i,N}(x)} = \lawmin(x)
$$
in probability on $\mathcal{P}(D([-1,\infty))$ for $x \in D$. Furthermore, the sequence of loss functions $(\Linfmin^N(x))_{N\in \N}$ converges to $\Linfmin(x)$ in probability with respect to the L\'evy-metric for every $x \in D$, i.e., for every $\varepsilon > 0$ it holds that
$$
\lim_{N\to\infty} \P{d_L (\Linfmin^N(x),\Linfmin(x)) > \varepsilon} = 0,
$$
where $d_L$ denotes the L\'evy-metric.
\end{theorem}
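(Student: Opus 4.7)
The plan is to follow the template of the proof of Theorem~\ref{thm:pertpropagation} almost verbatim, with the role of Proposition~\ref{thm:optproblemconv} now played by Proposition~\ref{prop:optprobconvmckean}. Fixing $x \in \R$, I would first observe that the initial conditions $(X_{0-}^{i} - x)_{i\in\N}$ are iid and in particular tight, so Corollary~\ref{cor:tightness} yields tightness of the empirical measures $\EmpM_N(x) := \frac{1}{N}\sum_{i=1}^{N} \delta_{\Xmin^{i,N}(x)}$ on $\mathcal{P}(D([-1,\infty)))$. Varadarajan's theorem provides almost sure convergence of $\frac{1}{N}\sum_{i=1}^N \delta_{X_{0-}^{i}-x}$ to the law of $X_{0-}-x$, so Proposition~\ref{thm:finitedimconvergence} identifies any subsequential weak limit $\mu$ of $\EmpM_N(x)$ as (almost surely) the law of a solution process to the perturbed McKean--Vlasov problem \eqref{eq:mckeanproblempert}. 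The lower bound $\langle \mu, \ellfunc\rangle \geq \Linfmin(x)$ is then automatic from the definition of the minimal solution.

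The new ingredient is identifying $\mu$ as the minimal solution for a sufficiently large set of shifts $x$. To that end I would first verify that $y \mapsto \Linfmin_t(y)$ is non-decreasing for every fixed $t$: this is immediate from the monotonicity of the operator $\Gamma$ in its initial condition and the fixed-point iteration representation in Proposition~\ref{lem:miniterationinfty}, since $X_{0-}-y_1 \geq X_{0-}-y_2$ for $y_1\leq y_2$ implies $\Gamma^{(k)}[0](y_1) \leq \Gamma^{(k)}[0](y_2)$ by induction. Consequently $y \mapsto \Linfmin_t(y)$ has at most countably many discontinuities $D_t$ for each $t$. Fixing a countable dense set $Q \subset [0,\infty)$ and setting $D := \R \setminus \bigcup_{t\in Q} D_t$, the complement $D^c$ is countable, which is exactly the co-countability claimed in the theorem.

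For $x \in D$ and $t \in Q$, applying Proposition~\ref{prop:optprobconvmckean} to the particle system shifted by $-x$ (so that the McKean--Vlasov shift becomes $x + \alpha N^{-\gamma}$) gives
\begin{equation*}
\limsup_{N\to\infty}\E{\Lmin^{N}_t(x)} \leq \limsup_{N\to\infty}\Linfmin_t(x+\alpha N^{-\gamma}) = \Linfmin_t(x+) = \Linfmin_t(x),
\end{equation*}
using monotonicity for the first equality and $x\in D$ for the second. Combining this upper bound with the lower bound $\langle \mu, \ellfunc\rangle \geq \Linfmin(x)$, and running the dominated-convergence/continuity-of-$\ellfunc_t$ argument of Step~3 of the proof of Theorem~\ref{thm:pertpropagation} (noting that Lemma~\ref{lemma:Crossingpropertyclosed} and Lemma~\ref{lemma:muellconvergence} apply unchanged here), I would conclude $\langle \mu, \ellfunc_t \rangle = \Linfmin_t(x)$ for $t$ in a dense set, hence everywhere by right-continuity. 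This forces $\mu = \lawmin(x)$ almost surely. Convergence of $\Lmin^{N}(x)$ to $\Linfmin(x)$ in probability with respect to the L\'evy-metric then follows exactly as in Step~4 of Theorem~\ref{thm:pertpropagation}.

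The main obstacle conceptually is controlling the exceptional set of $x$: a naive approach yields a $t$-dependent exceptional set which could in principle be dense. The resolution relies crucially on the monotonicity of $y \mapsto \Linfmin_t(y)$, which restricts the bad $x$ at each $t$ to a countable set; one then trades the parameter $t$ for a countable dense subset of $[0,\infty)$ and uses right-continuity in $t$ to recover the full-time statement. A secondary technical point is to verify that Proposition~\ref{prop:optprobconvmckean} really does transfer under the shift $X_{0-}^i \mapsto X_{0-}^i - x$, but this is just a relabelling of the initial condition throughout the proof of that proposition and does not require any new estimates.
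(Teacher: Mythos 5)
Your proposal is correct and follows essentially the same route as the paper: monotonicity of $x \mapsto \Linfmin_t(x)$ gives a countable exceptional set for each $t$ in a countable dense subset of $[0,\infty)$, the union of these is discarded to obtain the co-countable set $D$, Proposition~\ref{prop:optprobconvmckean} supplies the bound $\limsup_N \E{\Lmin_t^N(x)} \leq \Linfmin_t(x+) = \Linfmin_t(x)$ for $x\in D$, and the rest is the argument of Theorem~\ref{thm:pertpropagation} verbatim. The only difference is that you spell out the steps the paper compresses into ``the remainder of the proof is analogous.''
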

\begin{proof}
Let $D_0$ be a countable dense subset of $[0,\infty)$. Fix $t \in D_0$ and note that the map $x \mapsto \Linfmin_t(x)$ is increasing and therefore has at most countably many discontinuities. Let the set of all such discontinuities be denoted by $J_t$. Then, $J:= \bigcup_{t\in D_0} J_t$ is countable, and we set $D:= [0,\infty)\setminus J$. Then, for all $x \in D$ it follows from Proposition \ref{prop:optprobconvmckean} that we have
\begin{align*}
\limsup_{N\to\infty} \E{\Linfmin_t^N(x)} \leq \limsup_{N\to\infty}\Linfmin_t(x+ N^{-\gamma}) = \Linfmin_t(x), \quad t \in D_0.
\end{align*}
The remainder of the proof is analogous to the proof of Theorem \ref{thm:pertpropagation}.
\end{proof}

Following the proof of Theorem~\ref{thm:almostpropagation} we can see that if we would have stability of the minimal solution to the McKean--Vlasov problem under additive perturbations of the initial condition, we would obtain propagation of minimality as in Theorem \ref{cor:propmin} 
without having to assume that the physical solution to the McKean--Vlasov
problem is unique. We conjecture that such
a stability result holds true.

\begin{conjecture}
The map $x \mapsto \Linfmin(x)$ is continuous from $\R$ to $\DistFunctions$.
\end{conjecture}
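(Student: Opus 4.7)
The plan is to exploit the monotonicity of $x\mapsto\Linfmin(x)$: since shifting the initial condition further downward can only cause the process to hit $0$ earlier, each iterate $\Gamma_x^{(k)}[0]$ in the fixed-point construction of Proposition~\ref{lem:miniterationinfty} is pointwise non-decreasing in $x$, hence so is its limit $\Linfmin(x)$. In particular $\Linfmin_t(x_1)\leq \Linfmin_t(x_2)$ for all $t\geq 0$ whenever $x_1\leq x_2$, so continuity from $\R$ to $\DistFunctions$ is equivalent to the combination of left- and right-continuity at each $x$. As a preliminary, I would strengthen Proposition~\ref{thm:Gammacont} to a joint continuity statement: writing $\Gamma_x$ for the operator from \eqref{eqdef:Gamma} with $X_{0-}$ replaced by $X_{0-}-x$, the map $(x,\ell)\mapsto \Gamma_x[\ell]$ is continuous from $\R\times\DistFunctions$ to $\DistFunctions$. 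The proof of Proposition~\ref{thm:Gammacont} adapts verbatim once one observes that $\P{X_{0-}-x+B_s=\alpha\ell_s}=0$ by independence of $X_{0-}$ and $B$, so the Portmanteau and reflection-principle estimates are unaffected by the additional perturbation parameter.

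With joint continuity in hand, left-continuity follows from a compactness-plus-minimality sandwich. Fix $x\in\R$, let $x_n\uparrow x$, and let $\ell^*\in\DistFunctions$ be any subsequential limit of $(\Linfmin(x_n))_{n\in\N}$; such limits exist by compactness of $\DistFunctions$. Monotonicity gives $\ell^*\leq \Linfmin(x)$ at continuity points and hence everywhere by right-continuity. On the other hand, passing to the limit in $\Linfmin(x_n)=\Gamma_{x_n}[\Linfmin(x_n)]$ via joint continuity yields $\ell^*=\Gamma_x[\ell^*]$, so $\ell^*$ solves \eqref{eq:mckeanproblempert} at parameter $x$ and therefore $\ell^*\geq \Linfmin(x)$ by minimality. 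Hence $\ell^*=\Linfmin(x)$, and since every subsequence has the same limit, $\Linfmin(x_n)\to\Linfmin(x)$ in $\DistFunctions$.

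Right-continuity is the genuine obstacle. Applying the same argument to $x_n\downarrow x$ yields a subsequential limit $\ell^*$ with $\ell^*\geq \Linfmin(x)$ and $\Gamma_x[\ell^*]=\ell^*$, which is an inequality in the \emph{wrong} direction: $\ell^*$ is some fixed point of $\Gamma_x$ but could, a priori, strictly dominate the minimal one. A right jump of $x\mapsto\Linfmin(x)$ at a critical $x_c$ would correspond to a blow-up cascade that persists for arbitrarily small positive perturbations above $x_c$ but disappears at $x_c$ itself, and ruling this out appears to require genuinely new input. I see two promising angles. The first is to use the physical characterisation (Theorem~\ref{thm:minimalisphysical}) together with the physical jump identity of Proposition~\ref{thm:physjumpinequality}, showing that the subprobability measure of surviving particles at $t-$ depends on $x$ continuously enough that the infimum in $\Delta\Linfmin_t(x)=\inf\{y>0:\P{\taumin\geq t,\,\Xmin_{t-}\in[0,\alpha y]}<y\}$ cannot jump up as $x_n$ decreases to $x$. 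The second is to combine Proposition~\ref{prop:optprobconvmckean} with Theorem~\ref{thm:almostpropagation}: the latter already yields right-continuity along sequences in a co-countable set $D$, so if one can sharpen Proposition~\ref{prop:optprobconvmckean} to a quantitative comparison between $\Linfmin(x)$ and $\Linfmin(x+\alpha N^{-\gamma})$ along a sequence with $x+\alpha N^{-\gamma}\in D$, the general case follows. In either route the missing piece is a quantitative modulus of continuity for $\Linfmin$ under initial-condition perturbations, which is precisely the hardness already reflected in the open uniqueness question for physical solutions.
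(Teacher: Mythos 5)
This statement is not proved in the paper: it is stated as an open conjecture, precisely because the stability of $\Linfmin$ under downward perturbations of the initial condition is not known. Your proposal does not close it either, and you say so yourself, so the verdict is that there is a genuine gap rather than a complete alternative proof. What you do establish is sound and worth recording: monotonicity of $x\mapsto\Linfmin(x)$ follows from the monotonicity of $\Gamma$ applied to each iterate $\Gamma_x^{(k)}[0]$; the joint continuity of $(x,\ell)\mapsto\Gamma_x[\ell]$ does follow by rerunning the proof of Proposition~\ref{thm:Gammacont} (most cleanly by noting $\Gamma_x[\ell]=\Gamma[\ell+x/\alpha]$, so the reflection-principle bound acquires an extra $(x-x_n)/\alpha$ inside $\Phi$ which vanishes in the limit, and the null-set argument $\P{X_{0-}-x+B_s=\alpha\ell_s}=0$ is unaffected); and the sandwich argument for $x_n\uparrow x$ correctly forces every subsequential limit $\ell^*$ to satisfy $\ell^*\leq\Linfmin(x)$ by monotonicity and $\ell^*\geq\Linfmin(x)$ by minimality, since $\ell^*$ is a fixed point of $\Gamma_x$. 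Note also that monotonicity alone already gives two-sided continuity at all but countably many $x$ (this is exactly what drives Theorem~\ref{thm:almostpropagation}), so your left-continuity argument only improves matters on that countable exceptional set.

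The gap is exactly where you locate it: for $x_n\downarrow x$ the same limiting procedure produces a fixed point $\ell^*\geq\Linfmin(x)$ of $\Gamma_x$, and nothing in the fixed-point structure prevents $\ell^*$ from being a strictly larger solution of \eqref{eq:mckeanproblempert} at parameter $x$ --- this is the same obstruction as the open uniqueness problem for physical solutions, since one would need to know that the particular fixed point selected as a decreasing limit of minimal solutions is itself minimal. Neither of your two proposed repairs is carried out: the physical-jump route requires a continuity statement for $x\mapsto\lawstoppedmin_{t-}$ that is not available, and the route via Proposition~\ref{prop:optprobconvmckean} and Theorem~\ref{thm:almostpropagation} presupposes a quantitative modulus of continuity that is equivalent in difficulty to the conjecture itself. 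So the proposal should be regarded as a correct reduction of the conjecture to upper semicontinuity from the right at the (at most countably many) exceptional points, not as a proof.
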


\appendix
\appendixpage

\section{On the $M_1$- and $J_1$-topologies}\label{appA}
\subsection{Why the $J_1$-topology is ill-suited to the problem}
Of the four topologies initally proposed by Skorokhod, the $J_1$-topology is the most popular, and often simply referred to as ``the Skorokhod topology''. 
However, for the present purpose, the $J_1$-topology seems to be simply too strong - in particular, local accumulations of small jumps can obstruct convergence in 
the $J_1$-topology. We illustrate this point with an example, for which we need the following theorem.

\begin{theorem}\label{thm:J1conv}
Let $\ell^n, \ell$ be increasing \cadlag functions on $[0,\infty)$. Then, $\ell^n \to \ell$ as $n \to \infty$ in the $J_1$-topology if and only if there is a dense subset $D \subseteq [0,\infty)$ consisting of continuity points of $\ell$ such that for all $t \in D$ 
\begin{enumerate}[(i)]
\item $\ell_t^n \to \ell_t$
\item $\sum_{s\leq t} |\Delta \ell_s^n|^2 \to \sum_{s\leq t} |\Delta \ell_s|^2$.
\end{enumerate}
\end{theorem}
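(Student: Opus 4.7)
My plan is to prove the two implications separately using the standard time-change characterization of $J_1$ convergence: $\ell^n \to \ell$ in $J_1$ on $[0,T]$ iff there exist increasing homeomorphisms $\lambda_n\colon[0,T]\to[0,T]$ with $\|\lambda_n - \mathrm{id}\|_\infty \to 0$ and $\|\ell^n\circ\lambda_n - \ell\|_\infty \to 0$.

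For the forward direction, assume $\ell^n \to \ell$ in $J_1$ on every $[0,T]$. Condition (i) at any continuity point $t$ of $\ell$ is the classical fact that $\ell^n_t \to \ell_t$ at continuity points of the limit, and it follows by combining uniform convergence of $\ell^n\circ\lambda_n$, continuity of $\ell$ at $t$, and $\lambda_n \to \mathrm{id}$. Since $\ell$ has at most countably many discontinuities, the set $D$ of its continuity points is dense. For (ii) I would use the natural bijection between jumps of $\ell^n$ and jumps of $\ell^n\circ\lambda_n$ (same sizes, relocated times). Fix $t \in D$ and $\varepsilon>0$; only finitely many jumps $(t_k,a_k)_{k=1}^m$ of $\ell$ on $[0,t]$ exceed $\varepsilon$. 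Uniform convergence of $\ell^n\circ\lambda_n$ to $\ell$ forces each $a_k$ to be matched by a jump of $\ell^n$ at $\lambda_n(t_k)$ of size $a^n_k\to a_k$, and forces all remaining jumps of $\ell^n$ in $[0,t]$ to have size at most $2\varepsilon$ for $n$ large. The small-jump contribution is dominated by $\sum_{s\leq t,\,\Delta\ell^n_s \leq 2\varepsilon}(\Delta\ell^n_s)^2 \leq 2\varepsilon\,\ell^n_t$, and since $\ell^n_t \to \ell_t < \infty$ by (i), letting $\varepsilon \downarrow 0$ yields (ii).

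For the converse, fix $T \in D$ and $\varepsilon>0$. Enumerate the jumps $(t_k,a_k)_{k=1}^m$ of $\ell$ on $[0,T]$ with $a_k>\varepsilon$, and pick $\delta>0$ small enough that $t_k\pm\delta \in D$, the intervals $(t_k-\delta,t_k+\delta)$ are pairwise disjoint, and $\ell$ has no other jump of size $>\varepsilon/2$ therein. Applied at $t_k\pm\delta$, conditions (i) and (ii) imply that $\sum_{s\in(t_k-\delta,t_k+\delta]}\Delta\ell^n_s$ converges to a quantity close to $a_k$, while the corresponding sum of squares converges to a quantity close to $a_k^2$; by the elementary inequality $\max_j a^n_j \geq \bigl(\sum_j (a^n_j)^2\bigr)/\bigl(\sum_j a^n_j\bigr)$, this forces exactly one dominant jump $(t^n_k,a^n_k)$ with $t^n_k\to t_k$ and $a^n_k\to a_k$, while all other jumps of $\ell^n$ inside the window are of uniform size $o(1)$. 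I would then define $\lambda_n$ to be the piecewise linear homeomorphism of $[0,T]$ mapping $t_k\mapsto t^n_k$ and fixing the endpoints of $[0,T]$; by construction $\|\lambda_n-\mathrm{id}\|_\infty\to 0$.

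The main obstacle is showing $\|\ell^n\circ\lambda_n-\ell\|_\infty\to 0$ on the subintervals \emph{between} matched windows, where small jumps have not been aligned explicitly. On any closed subinterval $I \subseteq [0,T]\setminus\bigcup_k(t_k-\delta,t_k+\delta)$ with endpoints $a,b\in D$, the total increments of $\ell$ and of $\ell^n\circ\lambda_n$ on $I$ differ by $o(1)$ by (i) (using $\lambda_n(a),\lambda_n(b)\to a,b$ and continuity of $\ell$ at those points); since all jumps of $\ell$ on $I$ have size $\leq\varepsilon$ by construction and (ii) forces the asymptotic jumps of $\ell^n$ on $\lambda_n(I)$ to satisfy the same bound up to $o(1)$, monotonicity of both functions yields $\sup_{s\in I}|\ell^n(\lambda_n(s))-\ell(s)| \leq 3\varepsilon + o(1)$. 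Inside each matched window the piecewise linear $\lambda_n$ aligns the dominant jump, and the remaining discrepancy is controlled by the same small-jump bound. Sending $\varepsilon\downarrow 0$ along a sequence and diagonalising in $T$ gives $J_1$ convergence on $[0,\infty)$.
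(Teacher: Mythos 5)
The paper does not actually prove this statement; it simply cites Theorem VI.2.15 of Jacod and Shiryaev. Your blind attempt is therefore necessarily a different route: it is, in outline, the standard direct proof of that theorem via the time-change characterization of $J_1$. The forward direction is essentially correct: pointwise convergence at continuity points is classical, and your splitting of the jump-square sum into finitely many matched large jumps plus a small-jump remainder bounded by $2\varepsilon\,\ell^n_t$ works (you should also note that the jumps of $\ell^n$ on $[0,t]$ correspond to jumps of $\ell^n\circ\lambda_n$ on $[0,\lambda_n^{-1}(t)]$ rather than $[0,t]$, but the mismatch is absorbed because $t$ is a continuity point of $\ell$).

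The converse has the right architecture (dominant-jump extraction via $\max_j a^n_j\geq\sum_j(a^n_j)^2/\sum_j a^n_j$, a piecewise-linear time change, monotonicity in between), but two steps do not hold as written. First, between windows you invoke (ii) to claim that the jumps of $\ell^n$ on $\lambda_n(I)$ are asymptotically bounded by $\varepsilon$; convergence of the aggregated sum $\sum_{s\in(a,b]}(\Delta\ell^n_s)^2$ does not control individual jumps of $\ell^n$ (many $\varepsilon$-jumps of $\ell$ can have the same jump-square mass as one much larger jump of $\ell^n$), so you would need to further partition $I$ by points of $D$ into pieces carrying $O(\varepsilon^2)$ of the jump-square mass of $\ell$. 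In fact no control on the jumps of $\ell^n$ is needed there: the classical sandwich argument using only (i), monotonicity, and a partition of $I$ into pieces on which $\ell$ increases by at most $\varepsilon$ already gives $\limsup_n\sup_I|\ell^n-\ell|\leq 3\varepsilon$. Second, since your $\lambda_n$ fixes only $0$ and $T$, on the between-window intervals $\lambda_n$ is off the identity by up to $\max_k|t^n_k-t_k|$, which is only bounded by $\delta$ for fixed $\delta$; hence neither $\|\lambda_n-\mathrm{id}\|_\infty\to0$ nor $\ell^n(\lambda_n(b))\to\ell(b)$ follows at this stage, and comparing $\ell^n\circ\lambda_n$ to $\ell$ between windows becomes awkward because $\ell$ may increase substantially over an interval of length $\delta$. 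The clean repair is to let $\lambda_n$ also fix every window boundary $t_k\pm\delta$ (so that $\ell^n\circ\lambda_n=\ell^n$ between windows), and to record all estimates in the form $C(\varepsilon,\delta)+o_n(1)$ with $C(\varepsilon,\delta)\to0$, sending $\delta\to0$ and then $\varepsilon\to0$ only at the end — your closing diagonalisation already anticipates this, and the same remark disposes of the imprecise interim claims "$a^n_k\to a_k$", "$t^n_k\to t_k$" and "all other jumps are $o(1)$", which for fixed $\varepsilon,\delta$ are only true up to errors of size $C(\varepsilon,\delta)$. With these repairs the proof goes through.
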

\begin{proof}
See Theorem VI.2.15 in \cite{jacod2013limit}
\end{proof}

\begin{example}\label{ex:J1notconv}
For $n \in \N$, let $\ell^n$ be an increasing step function such that $\ell^n$ has $n$ jumps of size $1/n$ in the interval $[1/2-1/n,1/2+1/n]$ and is constant otherwise. Interpreting $\ell_t^n$ as the proportion of banks that defaults up to time $t$, this would mean that for all $n \in \N$, all of the banks in the system default after time $1/2-1/n$ and before time $1/2+1/n$. Certainly, as $n$ goes to infinity, we would expect the limit to be the function $\ell_t = \ind{[1/2,\infty]}(t)$, which corresponds to all the banks defaulting at time $1/2$. Assuming that $\ell^n$ converges to some function $g$ in the $J_1$-topology, condition $(i)$ in Theorem~\ref{thm:J1conv} yields that $g(t) =\ind{[1/2,\infty]}(t)$. However, as $\sum_{s\leq t} |\Delta  \ell^n_s|^2 = \frac{1}{n}$, condition $(ii)$ of Theorem~\ref{thm:J1conv} yields that $g$ must be continuous, a contradiction.
\end{example}

\begin{figure}[!htb]\centering
\minipage{0.32\textwidth}
  \includegraphics[width=\linewidth]{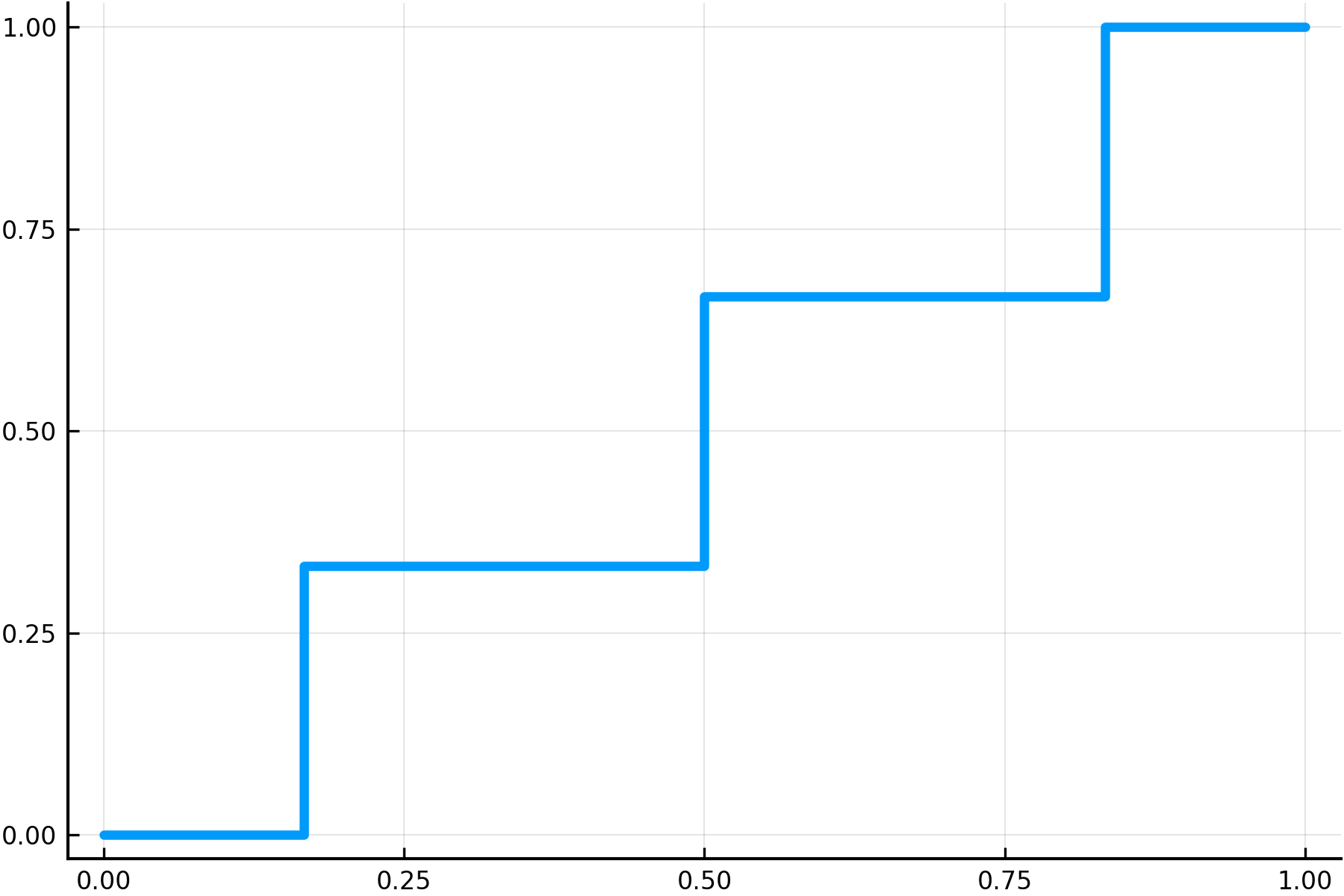}
\endminipage\hfill
\minipage{0.32\textwidth}
  \includegraphics[width=\linewidth]{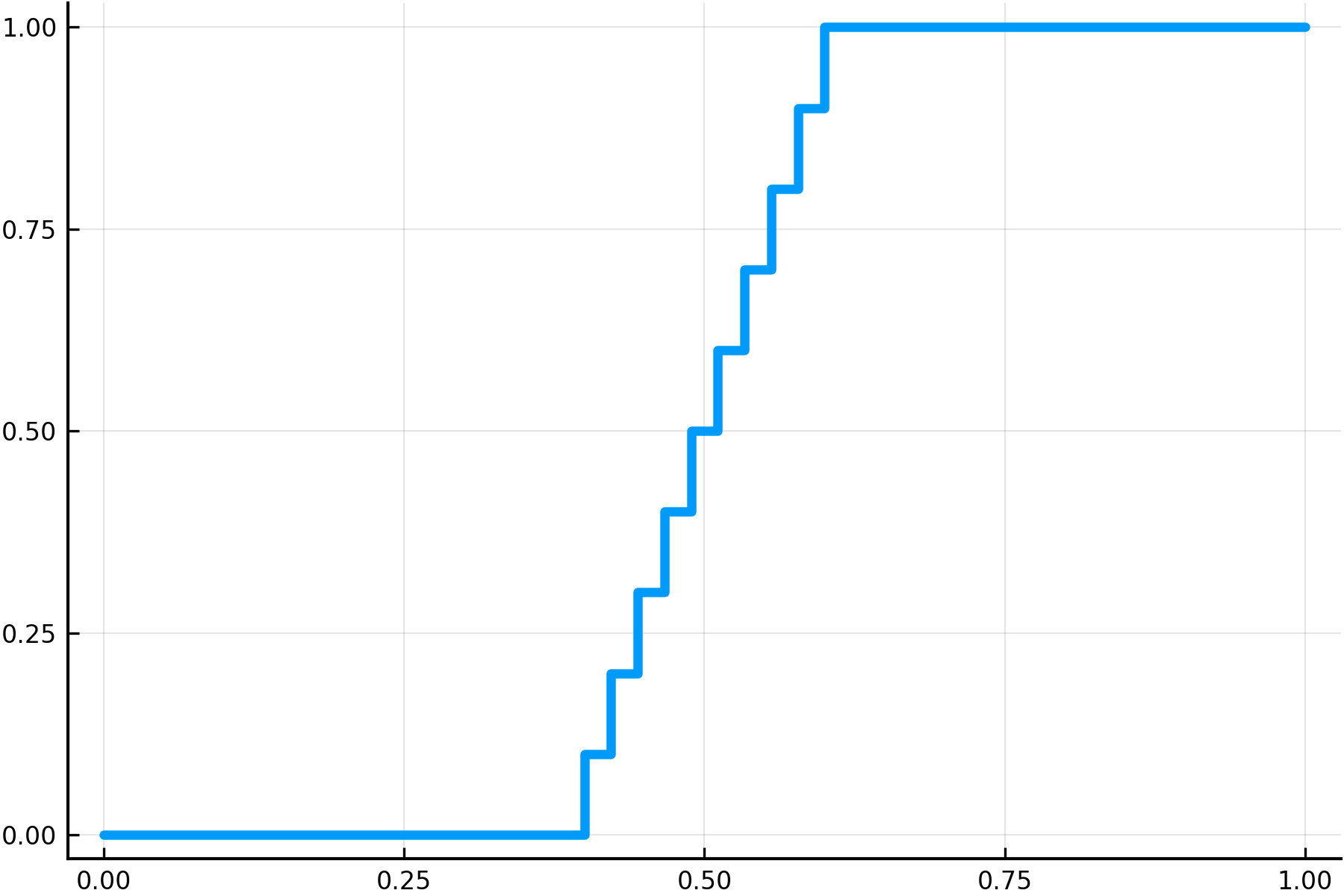}
\endminipage\hfill
\minipage{0.32\textwidth}%
  \includegraphics[width=\linewidth]{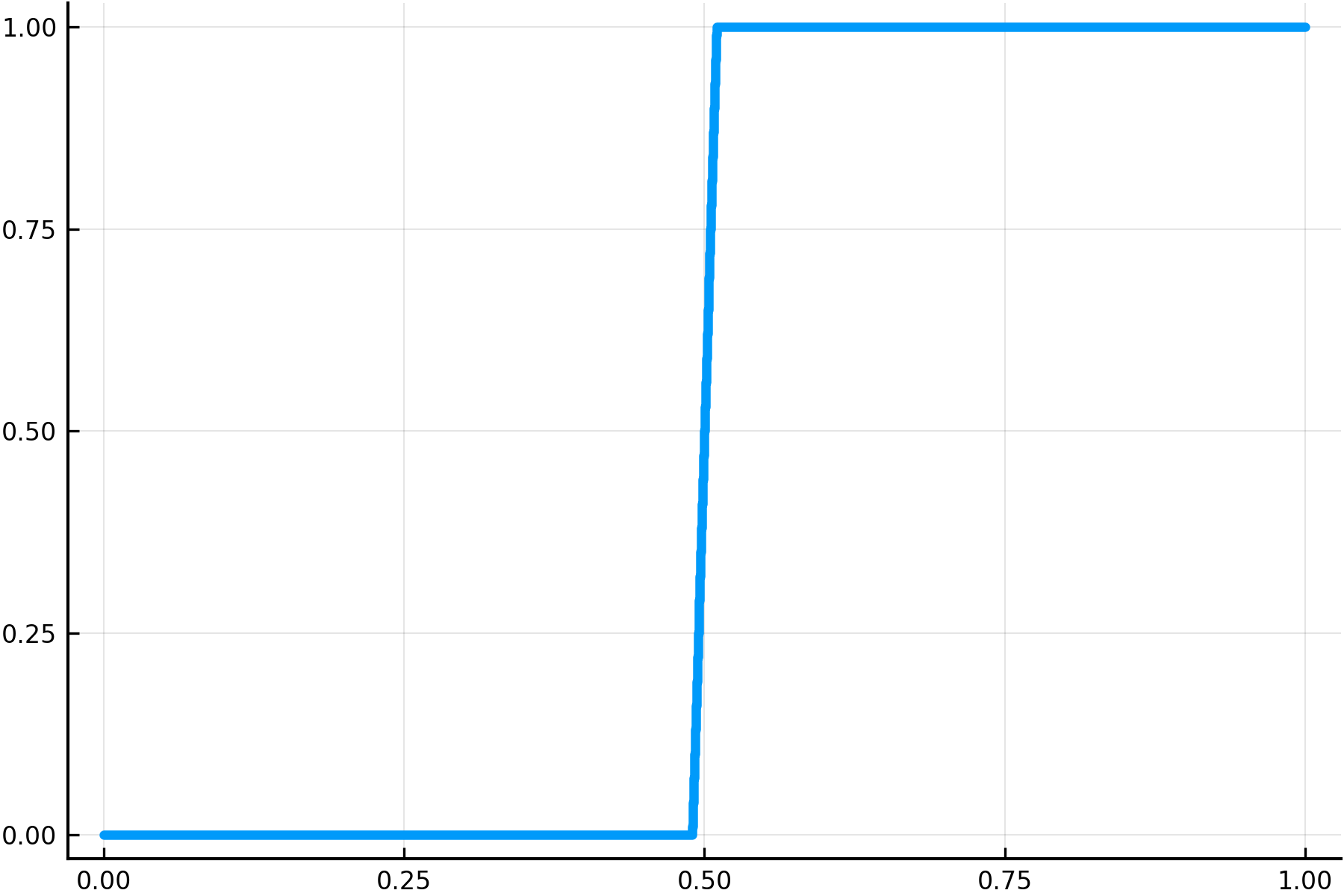}
\endminipage
\caption{The functions $\ell^n$ as defined in Example~\ref{ex:J1notconv} for $n=3,10,100$.}
\label{fig:J1nogood}
\end{figure}

Roughly speaking, the $J_1$-topology allows for some flexibility in the location of jumps in convergent sequences,
while requiring that the size of the jumps in the approximating sequence remains close to the size of the jumps in the limit in a certain sense.
With this intuition in mind, it is not surprising that the space of continuous functions is closed in the space of \cadlag functions endowed with
the $J_1$-topology. In contrast, continuous functions are dense in the space of \cadlag functions endowed with the $M_1$-topology, and the sequence
given in Example~\ref{ex:J1notconv} is convergent in the $M_1$-topology. 

\subsection{Some properties of the $M_1$-topology}
The $M_1$-topology is strictly weaker than the $J_1$-topology (Example~\ref{ex:J1notconv} serves as an example of a sequence which converges in $M_1$ but not in $J_1$ in view of Lemma~\ref{thm:M1monconvergence}). This is not necessarily a weakness: whenever we want to show tightness, a weaker topology is favorable because the conditions for compactness are less strict. We will see that the $M_1$-topology is particularly well-suited to deal with monotone functions in this context. 

We mention some fundamental properties of the $M_1$-topology in the following. For a path $x \in D([T_0,T])$, the space of c\`adl\`ag paths defined on $[T_0,T]$ taking values in $\R$ that are left-continuous at $T$, we define the completed graph 
\begin{align*}
\Graph_x := \{(t,y) \in [T_0,T]\times\R : y\in [x_{t-},x_t]\},
\end{align*}
where $[x_{t-},x_t]$ is the non-ordered segment between $x_{t-}$ and $x_t$ (this takes into account that $x_{t-}$ might be larger than $x_t$). The set $\Graph_x$ can be ordered in the following way: for $(t_1,y_1),(t_2,y_2) \in \Graph_x$, we say that $(t_1,y_1) \preceq (t_2,y_2)$ if either $t_1 < t_2$, or $t_1 = t_2$ and $|x_{t_{1}-}-y_1| \leq |x_{t_{2}-} - y_2|$. This order can be conceptualized more easily in the following way. The completion $\Graph_x$ can be imagined as a path in $2$-dimensional space, where we complete the graph of $x$ by connecting the discontinuities with straight lines going through the points $(t,x_{t-})$ and $(t,x_t)$. We can imagine a particle traveling along $\Graph_x$ from left to right; for $A,B \in \Graph_x$, we have $A\preceq B$ iff the particle reaches $A$ before it reaches $B$. This is illustrated in Figure~\ref{fig:graphorder}.

\begin{figure}\label{fig:graphorder}
\centering
\begin{tikzpicture}[domain=0:4]
    %\draw[very thin,color=gray] (-0.1,-1.1) grid (3.9,3.9);
    \draw[->] (0,0) -- (4.2,0) node[below,midway]{a)} node[right] {$t$};
    \draw[->] (0,0) -- (0,4.2) node[above] {$f(t)$};
    \draw[thick,color=cyan]  (0,1) -- (1,1);
    \draw[thick,color=cyan]  (1,3) -- (3,3);
    \draw[thick,color=cyan]  (3,2) -- (4,2);
\end{tikzpicture}
\begin{tikzpicture}[domain=0:4]
    %\draw[very thin,color=gray] (-0.1,-1.1) grid (3.9,3.9);
    \draw[->] (0,0) -- (4.2,0) node[midway,below]{b)};
    \draw[->] (0,0) -- (0,4.2);
    \draw[thick,color=cyan]  (0,1) -- (1,1);
    \draw[thick,color=cyan]  (1,1) -- (1,3); 
    \draw[thick,color=cyan]  (1,3) -- (3,3) node[midway,above] {$\Graph_x$};
    \draw[thick,color=cyan]  (3,3) -- (3,2); 
    \draw[thick,color=cyan]  (3,2) -- (4,2);
\end{tikzpicture}
\begin{tikzpicture}[domain=0:4]
    %\draw[very thin,color=gray] (-0.1,-1.1) grid (3.9,3.9);
    \draw[->] (0,0) -- (4.2,0) node[midway,below]{c)};
    \draw[->] (0,0) -- (0,4.2);
    \draw[thick,color=cyan]  (0,1) -- (1,1);
    \draw[thick,color=cyan]  (1,1) -- (1,3); 
    \draw[thick,color=cyan]  (1,3) -- (3,3);
    \draw[thick,color=cyan]  (3,3) -- (3,2); 
    \draw[thick,color=cyan]  (3,2) -- (4,2);
    
	\draw[color=brown,fill] (1,1.3) circle[radius=2pt] node[right]{P};
	\draw[color=brown,fill] (1,2.6) circle[radius=2pt] node[right]{Q};
	\draw[color=brown,fill] (2.7,3) circle[radius=2pt] node[above]{R};
	\draw[color=brown,fill] (3,2.5) circle[radius=2pt] node[right]{S};

    \end{tikzpicture}
\caption{In a), the graph of a piecewise constant path $x$ is plotted, b) shows the completion of its graph $\Graph_x$. In c), four points on the completed graph are marked to illustrate the order on the graph: For $A,B \in \Graph_x$, we have $A\preceq B$ iff  $A$ is reached before $B$ when $\Graph_x$ is traced out from left to right. In the situation depicted in $c)$, we find $P \prec Q \prec R \prec S$.} 
\end{figure}
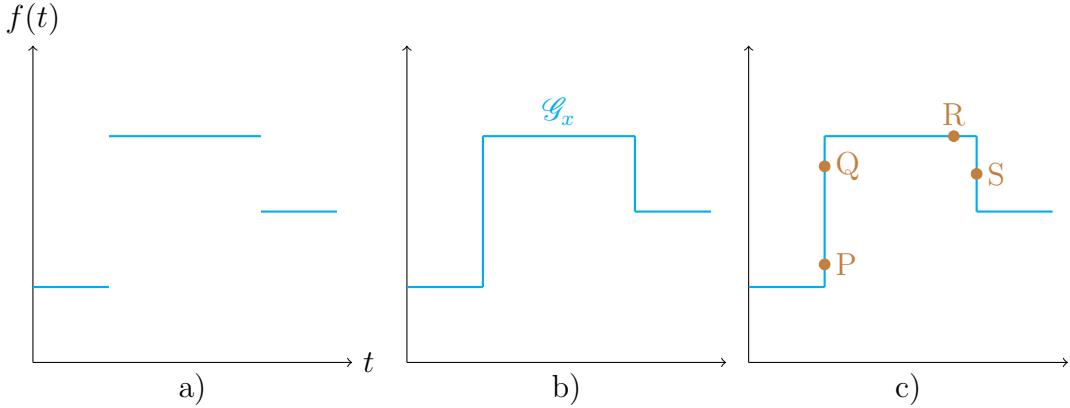

We then define a \emph{parametric representation} of $\Graph_x$ to be a continuous function $(r,u)$ that maps $[T_0,T]$ onto $\Graph_x$ such that $t \mapsto (r(t),u(t))$ is increasing with respect to ($\Graph_x,\preceq$). Denote the set of all parametric representations of $x$ as $R_x$.

For $x^1, x^2 \in D([T_0,T])$, we define the $M_1$-metric as 
\begin{align}\label{eqdef:M1metric}
d_{T}^{M_1}(x^1,x^2) = \inf_{\substack{(r^j, u^j) \in R_{x^j} \\ j=1,2}} \max(\|r^1-r^2\|_{\infty},\|u^1-u^2\|_{\infty}),
\end{align}
where $\|.\|_{\infty}$ is the supremum norm on $C([T_0,T])$. If we want to relate this to the picture with particles described earlier, we let the particles travel along the respective completed graphs and are allowed some freedom in choosing the velocities of the particles (albeit, due to the requirement that the parametric representations are increasing with respect to $(\Graph_x,\preceq)$, the velocities can never be negative). Then, two functions are close to each other in the $M_1$-topology, if there are velocity profiles for the particles such that the vertical and horizontal distance between the particles remains uniformly small. 

We provide some fundamental results regarding the $M_1$-topology which play a crucial role in many proofs of this paper.

The space $D([T_0,T])$ endowed with the $M_1$-topology is a Polish space, even though the metric defined in \eqref{eqdef:M1metric} is incomplete. It is generally not very pleasant to work directly with
the metric defined above, and we will seek to avoid doing so whenever possible. The following theorem will prove to be expedient in this endeavor, as it allows us to relate convergence in the $M_1$-topology to pointwise convergence when every path in the sequence is monotone. Let $M$ be the space defined in \eqref{eq:DistFunctiondef}.

\begin{lemma}\label{thm:M1monconvergence}
Let $(\ell^n)_{n\in \N},\ell \subset M$. Then $\ell^n\to\ell$ in the $M_1$-topology if and only if $\ell_t^n \to \ell_t$ for each $t$ in a subset of full Lebesgue measure of $[T_0,T]$ that includes $T_0$ and $T$.
\end{lemma}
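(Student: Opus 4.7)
The plan is to prove the two implications separately via direct manipulation of parametric representations.

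$(\Rightarrow)$ Suppose $\ell^n \to \ell$ in $M_1$, so one may pick $(r^n, u^n) \in R_{\ell^n}$ and $(r, u) \in R_\ell$ with $\max(\|r^n - r\|_\infty, \|u^n - u\|_\infty) \to 0$. Since every parametric representation sends $T_0$ to the initial endpoint and $T$ to the final endpoint of its completed graph, endpoint convergence $\ell^n_{T_0} \to \ell_{T_0}$ and $\ell^n_T \to \ell_T$ is immediate. For any interior continuity point $t$ of $\ell$ and $\delta > 0$, pick continuity points $t^- \in (t-\delta, t)$ and $t^+ \in (t, t+\delta)$ of $\ell$ and parameters $s^\pm$ with $r(s^\pm) = t^\pm$ (hence $u(s^\pm) = \ell_{t^\pm}$). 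For large $n$ we have $r^n(s^-) < t < r^n(s^+)$, so by monotonicity of $\ell^n$, $u^n(s^-) \leq \ell^n_t \leq u^n(s^+)$; taking $n \to \infty$ and then $\delta \to 0$, and using continuity of $\ell$ at $t$, yields $\ell^n_t \to \ell_t$. Since $\ell$ is monotone its discontinuity set is countable, so the continuity points of $\ell$ together with $\{T_0, T\}$ form a subset of $[T_0, T]$ of full Lebesgue measure.

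$(\Leftarrow)$ Assume pointwise convergence on a set $D$ of full Lebesgue measure containing $T_0$ and $T$. Replacing $D$ by $(D \cap \{\text{continuity points of } \ell\}) \cup \{T_0, T\}$, we may assume interior points of $D$ are continuity points of $\ell$. Fix $\epsilon > 0$; by monotonicity and boundedness of $\ell$, one can choose a finite partition $T_0 = t_0 < t_1 < \cdots < t_K = T$ with $t_i \in D$, $t_i - t_{i-1} < \epsilon$, and $\ell_{t_i} - \ell_{t_{i-1}} < \epsilon$ for all $i$. For $n$ large, $|\ell^n_{t_i} - \ell_{t_i}| < \epsilon$ uniformly in $i$. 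Construct parametric representations $(r, u)$ and $(r^n, u^n)$ that pass through the anchors $(t_i, \ell_{t_i})$, respectively $(t_i, \ell^n_{t_i})$, at common prescribed parameter values $s_i := T_0 + (i/K)(T - T_0)$; this is feasible because, by monotonicity of $\ell$ and $\ell^n$, the anchors lie in the correct $\preceq$-order on each completed graph. Between consecutive anchors, $r, r^n \in [t_{i-1}, t_i]$ gives $|r^n - r| < \epsilon$, while $u \in [\ell_{t_{i-1}}, \ell_{t_i}]$ and $u^n \in [\ell^n_{t_{i-1}}, \ell^n_{t_i}] \subseteq [\ell_{t_{i-1}} - \epsilon, \ell_{t_i} + \epsilon]$ give $|u^n - u| < 2\epsilon$. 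Hence $d_T^{M_1}(\ell^n, \ell) < 2\epsilon$ for $n$ large, yielding $\ell^n \to \ell$ in $M_1$.

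The main obstacle is the interpolation step in the backward direction: producing parametric representations that hit a prescribed sequence of anchor points at common parameter values. This is routine once one notes that each monotone arc of $\Graph_\ell$ between consecutive anchors admits a continuous, order-preserving parametrization of any prescribed parameter length, and likewise for $\Graph_{\ell^n}$. With this observation, pointwise convergence at the finitely many common anchor points lifts to uniform closeness of the parametric representations, which is precisely what the $M_1$-metric measures.
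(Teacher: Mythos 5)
The paper does not actually prove this lemma---it cites \cite[Theorem 12.5.1]{whitt2002stochastic}---so you are attempting a self-contained argument. Your forward implication is correct: anchoring at nearby continuity points of $\ell$ and using that the completed graph of a nondecreasing function is ordered in the second coordinate gives the sandwich $u^n(s^-)\leq \ell^n_t\leq u^n(s^+)$, and the endpoint values are forced because every parametric representation starts and ends at the extreme points of the graph.

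The backward direction, however, has a genuine gap. The claim that one can choose a partition with both $t_i-t_{i-1}<\epsilon$ and $\ell_{t_i}-\ell_{t_{i-1}}<\epsilon$ is false as soon as $\ell$ has a jump of size at least $\epsilon$: any partition interval meeting the jump time has oscillation at least the jump size (take $\ell=\ind{[1/2,T]}$). And your estimate $|u^n-u|<2\epsilon$ uses exactly this oscillation bound---on the interval $[s_{i-1},s_i]$ you only control $u$ and $u^n$ by confining them to $[\ell_{t_{i-1}},\ell_{t_i}]$ and $[\ell^n_{t_{i-1}},\ell^n_{t_i}]$, so if that range has width $\Delta\ell_s$, anchoring the endpoints alone permits $u$ to linger at the bottom of the vertical segment of $\Graph_\ell$ at a parameter where $u^n$ has already climbed to the top, giving $|u^n-u|\approx\Delta\ell_s$ rather than $2\epsilon$. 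This is precisely the situation the $M_1$-topology is built to handle (compare Example~\ref{ex:J1notconv}, where the limit has a unit jump while the approximants increase in many small steps). The repair requires an additional idea on each interval containing a large jump: choose $t_{i-1},t_i\in D$ close to the jump time $s$ so that $\ell^n_{t_{i-1}}\to\ell_{t_{i-1}}\approx\ell_{s-}$ and $\ell^n_{t_i}\to\ell_{t_i}\approx\ell_s$, and then synchronize the two parametrizations on $[s_{i-1},s_i]$ by their \emph{vertical} progress (both traversing comparable ranges of $u$-values over the same parameter subinterval), using that the time coordinates are automatically within $t_i-t_{i-1}<\epsilon$ of each other; the flat portions of the graphs still need to be matched in the time variable. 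Carrying this out carefully is the actual content of Whitt's proof, and your write-up does not supply it.
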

\begin{proof}
See \cite[Theorem 12.5.1]{whitt2002stochastic}.
\end{proof}

Convergence in the uniform norm implies convergence in the $M_1$-topology. The next theorem shows that if the limit path is continuous,
the converse holds as well. 

\begin{lemma}\label{thm:M1contconvergence}
Suppose that  $x^n \to x$ in $D([T_0,T])$ equipped with the $M_1$-topology. Then 
we have locally uniform convergence at all continuity point of $x$. In particular,
for all points $t$ at which $x$ is continuous it holds that $x^n_t \to x_t$.
\end{lemma}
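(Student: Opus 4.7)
The plan is to work directly with parametric representations of the completed graphs, which is the natural language for the $M_1$-topology. The crucial geometric observation is that continuity of $x$ at $t$ forces the completed graph $\Graph_x$ to stay close to the horizontal level $y = x_t$ over an entire time interval around $t$; the $M_1$-closeness of $x^n$ and $x$ then transfers this property to $\Graph_{x^n}$, which in particular pins down the values $x^n_s$ for $s$ in that interval.

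First I would fix a continuity point $t$ of $x$ and $\varepsilon > 0$. Since $x_{t-} = x_t$ and $x$ is \cadlag, one-sided limits supply $\delta_0 > 0$ with $|x_u - x_t| < \varepsilon/3$ for every $u \in (t - 2\delta_0, t + 2\delta_0) \cap [T_0,T]$. Taking left-limits of this estimate also yields $|x_{u-} - x_t| \leq \varepsilon/3$ on the same interval, since $x_{u-}$ is a limit of values $x_v$ with $v < u$ all obeying the bound. Consequently every point of $\Graph_x$ whose time coordinate lies in $(t - 2\delta_0, t + 2\delta_0)$ has its second coordinate within $\varepsilon/3$ of $x_t$. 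Next, I would invoke the definition of $d_{T}^{M_1}$ to pick parametric representations $(\tilde r^n, \tilde u^n) \in R_{x^n}$ and $(r^n, u^n) \in R_x$ with both $\|\tilde r^n - r^n\|_\infty$ and $\|\tilde u^n - u^n\|_\infty$ below $\min(\delta_0, \varepsilon/3)$ for all sufficiently large $n$. For $s \in (t - \delta_0, t + \delta_0) \cap [T_0, T]$, I would choose $\beta_s \in [0,1]$ with $\tilde r^n(\beta_s) = s$ and $\tilde u^n(\beta_s) = x^n_s$, which exists because the parametric representation surjects onto $\Graph_{x^n}$ and $(s, x^n_s)$ lies on it. Then $r^n(\beta_s) \in (t - 2\delta_0, t + 2\delta_0)$, so $|u^n(\beta_s) - x_t| < \varepsilon/3$; combined with $|\tilde u^n(\beta_s) - u^n(\beta_s)| < \varepsilon/3$ this produces $|x^n_s - x_t| < 2\varepsilon/3$. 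Together with $|x_s - x_t| < \varepsilon/3$ from the first step, one obtains $|x^n_s - x_s| < \varepsilon$ uniformly for $s \in (t - \delta_0, t + \delta_0) \cap [T_0, T]$, which is the desired local uniform convergence. The pointwise claim is the special case $s = t$.

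The main delicate point is the preliminary geometric step, which extends continuity at a single point $t$ to uniform control of $\Graph_x$ -- including its vertical segments coming from left-limits -- over a full neighborhood of $t$. Once this is in hand, the rest of the argument is a routine translation of the closeness of parametric representations into closeness of function values; a minor technicality worth noting is the existence of the parameter $\beta_s$ with the two prescribed properties, which follows immediately from the surjectivity requirement on parametric representations.
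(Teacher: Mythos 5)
Your proof is correct. The paper does not argue this lemma at all---it simply cites \cite[Theorem 12.4.1]{whitt2002stochastic}---so your contribution is a self-contained version of the standard argument behind that citation: continuity of $x$ at $t$ confines the completed graph $\Graph_x$ (including the vertical segments $[x_{u-},x_u]$) to a thin horizontal band over a full time-neighbourhood of $t$, and the closeness of parametric representations then transfers this band to the values $x^n_s$ for $s$ near $t$. All the steps check out, including the existence of the parameter $\beta_s$ hitting $(s,x^n_s)$ and the two-sided control of $x_{u-}$ via left limits; the only cosmetic discrepancy is that you take the parameter domain to be $[0,1]$ while the paper's Definition of a parametric representation uses $[T_0,T]$, which is immaterial.
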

\begin{proof}
See \cite[Theorem 12.4.1]{whitt2002stochastic}.
\end{proof}

Similarly to the $J_1$-topology, the $M_1$-topology does not turn $D([T_0,T])$ into a topological vector space. In particular, addition is not continuous in general. However, the following result holds.
\begin{lemma}\label{lemma:M1addition}
Assume that $x^n \to x$ and $y^n \to y$ in $D([T_0,T])$ equipped with the $M_1$-topology. If $x$ and $y$ have no common jumps of opposite sign, that is  $$\Delta x_t  \cdot\Delta y_t \geq 0, \quad t \in [T_0,T],$$ then $x^n + y^n \to x+y$ in $D([T_0,T])$. 
\begin{proof}
See \cite[Theorem 12.7.3]{whitt2002stochastic}.
\end{proof}
\end{lemma}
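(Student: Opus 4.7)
The plan is to work through the parametric representation characterization of the $M_1$-topology: $x^n\to x$ in $D([T_0,T])$ iff there exist parametric representations $(r^n,u^n)\in R_{x^n}$ and $(r,u)\in R_x$ with $\|r^n-r\|_\infty\vee\|u^n-u\|_\infty\to 0$, and similarly for $y$. The key observation is that whenever one can produce \emph{synchronized} parametric representations of $x$ and $y$, i.e.\ pairs $(r,u)\in R_x$ and $(r,v)\in R_y$ sharing the same time component $r$, then $(r,u+v)$ traces the completed graph $\Graph_{x+y}$, and it lies in $R_{x+y}$ precisely when $u+v$ is monotone along the vertical segments of that graph. At any common jump time $t$ of $x$ and $y$, the synchronized curves $u$ and $v$ traverse the segments $[x_{t-},x_t]$ and $[y_{t-},y_t]$ simultaneously, so monotonicity of $u+v$ holds exactly when $\Delta x_t$ and $\Delta y_t$ share a common sign -- which is our hypothesis. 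At jump times of only one of $x,y$ the other component is locally constant, so monotonicity on $\Graph_{x+y}$ is automatic.

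The first step is to pick arbitrary parametric representations $(\bar r^n,\bar u^n)\in R_{x^n}$ and $(\bar s^n,\bar v^n)\in R_{y^n}$ converging uniformly to parametric representations of $x$ and $y$, respectively. The second, more technical, step is a synchronization procedure that, given two parametric representations with possibly distinct time components, reparameterizes both on a common time axis without altering the traced graphs. A natural realization is to parameterize by normalized arc length along the union $\Graph_{x^n}\cup\Graph_{y^n}$, where points sharing a time coordinate in $[T_0,T]$ are identified. This yields synchronized pairs $(r^n,u^n)\in R_{x^n}$, $(r^n,v^n)\in R_{y^n}$ and, analogously, $(r,u)\in R_x$, $(r,v)\in R_y$.

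The main step is to verify that $r^n\to r$, $u^n\to u$ and $v^n\to v$ uniformly on $[T_0,T]$. Away from common jumps of $x$ and $y$ this follows from Lemma~\ref{thm:M1contconvergence}, which gives locally uniform convergence of $x^n,y^n$ at continuity points of their limits, combined with the fact that at jumps of only one of the two limits the synchronization is essentially trivial. The delicate case -- and the main obstacle -- is the treatment of common jump points: one must show that for $n$ large, $x^n$ and $y^n$ develop matching jumps localized in a shrinking time window, of sizes converging to $\Delta x_t,\Delta y_t$, and that these can be traversed simultaneously by the synchronized reparameterizations. Here the no-opposite-jumps hypothesis, applied both to $(x,y)$ and, via an asymptotic argument, to $(x^n,y^n)$, rules out a cancellation scenario in which $u^n+v^n$ would oscillate over a vanishingly short time interval while the sum $x^n+y^n$ itself remains regular.

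Once uniform convergence of the synchronized representations is in hand, monotonicity on $\Graph_{x^n+y^n}$ (so that $(r^n,u^n+v^n)\in R_{x^n+y^n}$) is guaranteed for large $n$ by the sign condition on $(x,y)$ together with the asymptotic matching of jumps. Since $(r,u+v)\in R_{x+y}$ and $\|r^n-r\|_\infty\vee\|(u^n+v^n)-(u+v)\|_\infty\to 0$, we conclude $x^n+y^n\to x+y$ in the $M_1$-topology. The single place where the sign hypothesis is genuinely used is the synchronization near common jumps, which is why a naive coordinate-wise addition of parametric representations would fail in general.
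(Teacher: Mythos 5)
The paper itself offers no argument here: it simply cites \cite[Theorem 12.7.3]{whitt2002stochastic}, whose proof goes through the local oscillation-function characterization of $M_1$ convergence rather than an explicit synchronization of parametric representations. Your opening observation is correct and is indeed the right way to see where the sign hypothesis enters: if $(r,u)\in R_x$ and $(r,v)\in R_y$ share the same time component, then on each set $r^{-1}(\{t\})$ the components $u$ and $v$ traverse $[x_{t-},x_t]$ and $[y_{t-},y_t]$ monotonically, so when $\Delta x_t\cdot\Delta y_t\geq 0$ the sum $u+v$ traverses $[(x+y)_{t-},(x+y)_t]$ monotonically and $(r,u+v)\in R_{x+y}$; at a jump of only one path the other component is constant on $r^{-1}(\{t\})$, so nothing can go wrong there.

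The gap is exactly at the point you label ``the main obstacle'', and it is not a detail but the entire content of the theorem. Uniform convergence of \emph{some} representations of $x^n$ to ones of $x$, and separately for $y^n,y$, does not yield \emph{synchronized} representations of $(x^n,y^n)$ converging uniformly to synchronized representations of $(x,y)$; once you re-synchronize you must re-prove uniform convergence, and the proposal asserts this rather than proving it. Your canonical choice does not repair this: ``normalized arc length along $\Graph_{x^n}\cup\Graph_{y^n}$ with time-identification'' is not a well-defined parametrized curve as stated, and arc length is not controlled by the $M_1$-topology at all --- a sequence with $n$ oscillations of height $n^{-1/2}$ converges to the zero path in $M_1$ while the lengths of its graphs diverge --- so arc-length parametrizations of the prelimit paths need not converge to anything tied to the limit. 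Moreover, the intermediate claim that for large $n$ the paths $x^n,y^n$ ``develop matching jumps of sizes converging to $\Delta x_t,\Delta y_t$'' is false under $M_1$ convergence: continuous paths are $M_1$-dense, so $x^n$ may have no jumps whatsoever. What one can and must show is the quantitative statement that in a shrinking window around a common jump time both prelimit paths stay uniformly close to monotone traversals in a common direction (equivalently, that the $M_1$ oscillation of $x^n+y^n$ there becomes small), which is precisely where the no-opposite-jumps hypothesis is consumed; this step is missing, so the proposal is a plausible plan whose decisive estimate is unproven and whose stated route through matching jumps would fail.
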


If $(\ell_n)_{n\in \N}$ is a sequence of distribution functions such that $\ell_n \to \ell$ in $\DistFunctions$,  
Lemma~\ref{thm:M1monconvergence} tells us that $\ell_n \to \ell$ in the $M_1$-topology if and only if $\ell^n$ converges pointwise to
$\ell$ in the interval endpoints. To remove this restriction of convergence in the interval endpoints, we will consider processes
on all of $[0,\infty)$, and (somewhat artifically) extend the domain to $[-1,\infty)$, where we let the processes stay constant
for all negative times. Following \cite{whitt2002stochastic}, we need to define the $M_1$-metric on noncompact domains.\\

\begin{definition}\label{def:M1infty}
We say that $x^n \to x$ in $D([T_0,\infty))$ if $x^n$ converges to $x$ in $D([T_0,T_k])$ for each $T_k$ in some sequence $(T_k)_{k\in \N}$ with $T_k \to \infty$, where the sequence $(T_k)_{k\in \N}$ may depend on $x$. For $t > 0$, let $\hat d_t$ be a metric that makes $D([T_0,t])$ complete, then $D([T_0,\infty))$ is metrized by
\begin{align*}
d_{[T_0,\infty]}^{M1} (x^1, x^2) := \int_{T_0}^{\infty} e^{-t}(\hat d_t(x^1 , x^2) \wedge 1)~ \mathrm{d}t.
\end{align*}
\end{definition}
Equipped with this metric, $D([T_0,\infty))$ is a Polish space by construction.  We have the following equivalence.
\begin{lemma}\label{lemma:Dinftyconvergence}
For $x^n, x \in D([T_0,\infty))$, convergence of $x^n$ against $x$ with respect to $d_{[T_0,\infty]}^{M_1}$ is equivalent to convergence of $x^n$ against $x$ with respect to $\hat d_t$ for all $t > T_0$ where $x$ is continuous. 
\end{lemma}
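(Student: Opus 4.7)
The plan is to establish the two implications separately. The direction ``$\Leftarrow$'' is a short application of dominated convergence: since $x$ is \cadlag, its set of discontinuities in $[T_0,\infty)$ is at most countable, so the hypothesis $\hat d_t(x^n,x)\to 0$ at all continuity points $t>T_0$ of $x$ yields pointwise convergence to zero of the integrand $e^{-t}(\hat d_t(x^n,x)\wedge 1)$ for Lebesgue-a.e.\ $t$. Since the integrand is uniformly bounded by the integrable function $e^{-t}$, dominated convergence gives $d_{[T_0,\infty]}^{M_1}(x^n,x)\to 0$.

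For the direction ``$\Rightarrow$'', the key technical ingredient is the following restriction principle: if $T_0 < s < t$, $s$ is a continuity point of $x$, and $\hat d_t(x^n,x)\to 0$, then also $\hat d_s(x^n,x)\to 0$. I would prove this directly from the parametric-representation definition of $\hat d_t$. Pick near-optimal parametric representations $(r,u)$ and $(r^n,u^n)$ of $x$ and $x^n$ on $[T_0,t]$, let $\alpha$ be the parameter value at which $r$ first reaches $s$ (so that $u(\alpha)=x_s$, using that the completed graph of $x$ has no vertical segment at $s$), restrict both representations to parameters in $[T_0,\alpha]$, and reparametrize affinely onto $[T_0,s]$. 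Because $\|r^n-r\|_\infty$ is small and Lemma~\ref{thm:M1contconvergence} gives locally uniform convergence at $s$, we have $x^n_{s-},x^n_s\to x_s$, so after a small adjustment of the right endpoint one obtains valid parametric representations of $x|_{[T_0,s]}$ and $x^n|_{[T_0,s]}$ whose sup-distance exceeds that of the original representations only by an additive $o(1)$ term. This yields $\hat d_s(x^n,x)\le \hat d_t(x^n,x)+o(1)\to 0$.

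With the restriction principle in hand, the direction ``$\Rightarrow$'' follows via the standard subsequence criterion. Fix a continuity point $s>T_0$ of $x$. Given any subsequence of $(x^n)$, extract a further subsequence $(x^{n_k})$ with $\sum_k d_{[T_0,\infty]}^{M_1}(x^{n_k},x)<\infty$. Tonelli's theorem applied to
$$
\int_{T_0}^{\infty} e^{-t}\sum_{k}\bigl(\hat d_t(x^{n_k},x)\wedge 1\bigr)\,\mathrm{d}t \;=\; \sum_{k} d_{[T_0,\infty]}^{M_1}(x^{n_k},x) \;<\; \infty
$$
shows that $\hat d_t(x^{n_k},x)\to 0$ for Lebesgue-a.e.\ $t\in[T_0,\infty)$. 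Choosing $t>s$ that is both a continuity point of $x$ and among this full-measure set, the restriction principle yields $\hat d_s(x^{n_k},x)\to 0$; since every subsequence of $(x^n)$ has such a further subsequence, the subsequence criterion upgrades this to $\hat d_s(x^n,x)\to 0$. The main obstacle is making the restriction principle rigorous, in particular handling the affine rescaling of the parameter interval and the fact that $x^n$ need not be continuous at $s$.
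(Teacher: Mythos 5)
Your argument is correct. Note that the paper does not prove this lemma at all: it simply cites \cite[Theorem 12.9.3]{whitt2002stochastic}, and what you have written is essentially a reconstruction of Whitt's proof. Your ``$\Leftarrow$'' direction (co-countably many discontinuities plus dominated convergence) is exactly the standard argument. For ``$\Rightarrow$'', your two ingredients --- the restriction principle and the $L^1$-to-a.e.\ subsequence extraction combined with the subsequence criterion --- correspond to Whitt's Lemma 12.9.2 and the proof of his Theorem 12.9.3, respectively. The restriction principle is indeed the only delicate point, and your sketch handles the two genuine issues correctly: (i) the first parameter $\alpha$ at which $r$ reaches $s$ gives $u(\alpha)=x_s$ precisely because $s$ is a continuity point of $x$, so the truncated representation of $x$ parametrizes $\Graph_{x|_{[T_0,s]}}$ exactly; and (ii) the truncated representation of $x^n$ only covers $[T_0,r^n(\alpha)]$ with $|r^n(\alpha)-s|\le\|r^n-r\|_\infty$, so an endpoint correction is needed, and the locally uniform convergence at $s$ from Lemma~\ref{thm:M1contconvergence} is the right tool to bound that correction by $o(1)$ (it also disposes of the issue that $x^n$ need not be left-continuous at $s$, since $x^n_{s-},x^n_s\to x_s$ makes the choice of convention at the right endpoint irrelevant in the limit). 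If you were to write this out in full, the only remaining bookkeeping is verifying that the glued/truncated maps are continuous, onto the completed graph, and order-preserving; this is routine.
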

\begin{proof}
See \cite[Theorem 12.9.3]{whitt2002stochastic}.
\end{proof}

\begin{lemma}\label{thm:Borelgeneval}
The Borel $\sigma$-field of $D([T_0,\infty))$ endowed with the $M_1$-topology is generated by the evaluation mappings.
\end{lemma}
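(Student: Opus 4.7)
The plan is to establish both inclusions between $\mathcal{E} := \sigma(\pi_t : t \geq T_0)$, the $\sigma$-field generated by the evaluation maps $\pi_t(x) = x_t$, and the Borel $\sigma$-field $\mathcal{B}(D([T_0,\infty)))$. The inclusion $\mathcal{E} \subseteq \mathcal{B}$ reduces to Borel measurability of each $\pi_t$, which is nontrivial because $\pi_t$ fails to be $M_1$-continuous at paths that jump at $t$. The reverse inclusion I would obtain by a Borel-isomorphism argument that exploits the fact that $D([T_0,\infty))$ is Polish (Definition~\ref{def:M1infty}).

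For measurability of $\pi_t$, I would introduce the averaged evaluations $\pi_t^{\varepsilon}(x) := \varepsilon^{-1}\int_t^{t+\varepsilon} x_s\, ds$ for $\varepsilon > 0$ and argue that they are $M_1$-continuous. Given $x^n \to x$ in $D([T_0,\infty))$, Lemma~\ref{lemma:Dinftyconvergence} reduces the analysis to a compact interval $[T_0,T]$ with $T > t + \varepsilon$ a continuity point of $x$ (available since $x$ has countably many discontinuities); on that interval, $(x^n)$ is uniformly bounded in the sup norm (a standard property of $M_1$-convergence that follows from uniform convergence of parametric representations), while Lemma~\ref{thm:M1contconvergence} gives $x^n_s \to x_s$ at every continuity point of $x$, i.e.\ Lebesgue-a.e. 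Dominated convergence then yields $\pi_t^{\varepsilon}(x^n) \to \pi_t^{\varepsilon}(x)$. Finally, the right-continuity of \cadlag paths gives $\pi_t(x) = \lim_{\varepsilon \downarrow 0} \pi_t^{\varepsilon}(x)$ pointwise for every $x$, exhibiting $\pi_t$ as a pointwise limit of continuous functions and thus Borel measurable.

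For the reverse inclusion, I would fix a countable dense set $D_0 \subseteq [T_0,\infty)$ containing $T_0$ and consider $\Phi: D([T_0,\infty)) \to \R^{D_0}$ defined by $\Phi(x) = (x_t)_{t \in D_0}$. This map is $\mathcal{E}$-measurable into the product $\sigma$-field on $\R^{D_0}$, which for countable $D_0$ coincides with its Borel $\sigma$-field. Right-continuity of \cadlag paths gives injectivity: if $x_{t^*} \neq y_{t^*}$ for some $t^* \geq T_0$, then $x_s \neq y_s$ throughout a right-neighbourhood of $t^*$, and this neighbourhood contains a point of $D_0$. Since both $D([T_0,\infty))$ and $\R^{D_0}$ are standard Borel spaces, Kuratowski's theorem on Borel injections gives that $\Phi(D([T_0,\infty)))$ is Borel and $\Phi$ is a Borel isomorphism onto its image. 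For any $B \in \mathcal{B}$, $\Phi(B)$ is therefore Borel in $\R^{D_0}$, hence lies in the $\sigma$-field generated by the coordinate projections, so that $B = \Phi^{-1}(\Phi(B)) \in \mathcal{E}$.

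The main obstacle I anticipate is the first step: the evaluations are genuinely discontinuous in $M_1$, and some smoothing is required to show measurability. The averaging trick handles this cleanly, but its justification leans on a property of $M_1$-convergence—uniform boundedness of a convergent sequence on compact intervals—which is standard (see Whitt) but not explicitly recorded in the paper's appendix, and would be the one point requiring a brief justification. Once measurability is in hand, the identification of $\sigma$-fields is a routine consequence of Kuratowski's theorem.
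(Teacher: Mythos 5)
Your proof is correct, but it takes a genuinely different route from the paper. The paper disposes of the lemma in two lines: it cites Jacod--Shiryaev for the fact that the $J_1$-Borel $\sigma$-field is generated by the evaluations, and then invokes the theorem that two comparable Lusin topologies have the same Borel sets (Schwartz), since $M_1$ is weaker than $J_1$. You instead give a self-contained argument: the inclusion $\sigma(\pi_t : t\geq T_0)\subseteq\mathcal{B}$ via the averaged evaluations $\pi_t^{\varepsilon}$, whose $M_1$-continuity follows from a.e.\ pointwise convergence (Lemma~\ref{thm:M1contconvergence}) plus uniform boundedness on compacts and dominated convergence, with $\pi_t=\lim_{\varepsilon\downarrow 0}\pi_t^{\varepsilon}$ by right-continuity; and the reverse inclusion via injectivity of $x\mapsto(x_t)_{t\in D_0}$ on a countable dense set together with the Lusin--Souslin/Kuratowski theorem on Borel injections between standard Borel spaces. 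Both halves are sound: the smoothing argument is the standard way to handle the discontinuity of $\pi_t$ at jump times, the uniform boundedness you lean on does follow from uniform convergence of the spatial components of parametric representations, and the Kuratowski step is applied correctly (your map is $\mathcal{B}$-measurable because of the first inclusion, so the theorem applies to the Polish space $D([T_0,\infty))$). What the paper's route buys is brevity and the avoidance of any direct analysis of the $M_1$-topology; what yours buys is independence from the $J_1$ result and from the comparable-Lusin-topologies theorem, at the cost of importing Kuratowski's theorem, which is descriptive-set-theoretic machinery of essentially the same depth as what the paper cites.
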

\begin{proof}
By \cite[Theorem 1.14c]{jacod2013limit}, the claim holds for the Borel $\sigma$-field on $D([T_0,\infty))$ generated by the $J_1$-topology. 
By definition, the $J_1$-topology is stronger than the $M_1$-topology, and as any two comparable Lusin spaces have the same Borel sets \cite[p.101]{schwartz1973radon},
the claim follows.
\end{proof}

\begin{lemma}\label{lemma:limM1inf}
Assume that $x^n \to x$ in $D([T_0,\infty))$. Then if $t \in [T_0,\infty)$ is a continuity point of $x$, it holds that
$
\lim_{n\to\infty} \inf_{s\in [T_0,t]} x^n_s = \inf_{s \in [T_0,t]} x_s.
$
\end{lemma}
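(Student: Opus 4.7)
The plan is to reduce to a compact interval and then prove the two asymptotic inequalities
$\limsup_n \inf_{s \in [T_0, t]} x_s^n \leq \inf_{s \in [T_0, t]} x_s$ and
$\liminf_n \inf_{s \in [T_0, t]} x_s^n \geq \inf_{s \in [T_0, t]} x_s$ separately. Since continuity points of $x$ are cocountable, I can pick $T > t$ at which $x$ is continuous; by Lemma~\ref{lemma:Dinftyconvergence}, $x^n \to x$ in the $M_1$-metric on the compact space $D([T_0, T])$, and all subsequent arguments take place there.

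For the upper bound I would let $C \subseteq [T_0, t]$ be the set of continuity points of $x$; since $x$ is \cadlag and $t$ is assumed continuous, $C$ is cocountable, dense, and contains $t$. Using right-continuity of $x$ at each interior point together with $t \in C$, one verifies $\inf_{s \in [T_0, t]} x_s = \inf_{s \in C} x_s$. By Lemma~\ref{thm:M1contconvergence} we have $x_s^n \to x_s$ for every $s \in C$, and taking infima gives $\limsup_n \inf_{[T_0, t]} x^n \leq \inf_{C} x = \inf_{[T_0, t]} x$.

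For the lower bound I would work with parametric representations. Fix $\varepsilon > 0$. For $n$ large enough, $d_T^{M_1}(x^n, x) < \varepsilon$, so there exist $(r^n, u^n) \in R_{x^n}$ and $(\tilde r^n, \tilde u^n) \in R_x$ with $\|r^n - \tilde r^n\|_\infty \vee \|u^n - \tilde u^n\|_\infty < \varepsilon$. The key identity
\begin{equation*}
\inf_{s \in [T_0, t]} x_s^n \;=\; \inf\{u^n(v) : v \in [0,1],\ r^n(v) \leq t\}
\end{equation*}
and its analogue for $x$ follow from the surjectivity of the parametric representations onto the completed graphs, combined with the fact that $x_{s-} = \lim_{u \uparrow s} x_u \geq \inf_{[T_0, s)} x$, so that the completed-graph points contribute no values below the pathwise infimum. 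For any $v$ with $r^n(v) \leq t$ one has $\tilde r^n(v) < t + \varepsilon$ and $u^n(v) > \tilde u^n(v) - \varepsilon$, which yields
$\inf_{s \in [T_0, t]} x_s^n \geq \inf_{s \in [T_0, t + \varepsilon]} x_s - \varepsilon$.
Because $t$ is a continuity point, right-continuity gives $\inf_{s \in [t, t + \varepsilon]} x_s \to x_t \geq \inf_{[T_0, t]} x$ as $\varepsilon \downarrow 0$, so $\inf_{[T_0, t + \varepsilon]} x \to \inf_{[T_0, t]} x$. Letting first $n \to \infty$ and then $\varepsilon \downarrow 0$ gives the lower bound.

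The main obstacle is the lower bound, and it is precisely here that the continuity of $x$ at the endpoint $t$ is indispensable: if $x$ had a downward jump at $t$, the approximants $x^n$ could realize their corresponding drop slightly after $t$, keeping $\inf_{[T_0, t]} x^n$ strictly above $\inf_{[T_0, t]} x$ in the limit. The only subtlety beyond bookkeeping is justifying the identification of $\inf_{[T_0, t]} x_s$ with an infimum over the parametric representation, for which the left-limit bound $x_{s-} \geq \inf_{[T_0, s)} x$ does the work.
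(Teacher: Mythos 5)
Your proof is correct, but it takes a more self-contained route than the paper. The paper's argument is essentially a two-step citation: after the same reduction to $D([T_0,T])$ via Lemma~\ref{lemma:Dinftyconvergence}, it invokes Theorem 13.4.1 of \cite{whitt2002stochastic} (continuity of the running-infimum map $x \mapsto \inf_{s\in[T_0,\cdot]}x_s$ on $D([T_0,T])$ with the $M_1$-topology) and then evaluates at the continuity point $t$ using Lemma~\ref{thm:M1contconvergence}. You instead prove the relevant special case of Whitt's theorem from scratch: the upper bound by identifying $\inf_{[T_0,t]}x$ with the infimum over continuity points (which is where $t\in C$ is used), and the lower bound by comparing parametric representations and using that completed-graph points never fall below the pathwise infimum. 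Both halves are sound; your version buys independence from the external reference at the cost of length, and the graph-infimum identity you isolate is exactly the structural fact that makes the $M_1$-topology compatible with running infima.

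One remark in your closing paragraph is mislabeled: the scenario you describe (a downward jump of $x$ at $t$ realized by the $x^n$ slightly after $t$, so that $\inf_{[T_0,t]}x^n$ stays strictly \emph{above} $\inf_{[T_0,t]}x$) is a failure of the \emph{upper} bound $\limsup_n \inf x^n \leq \inf x$, not of the lower bound. Indeed, your lower-bound argument only uses right-continuity of $x$ at $t$ (to get $\inf_{[T_0,t+\varepsilon]}x \to \inf_{[T_0,t]}x$), which holds for every \cadlag path; it is the identity $\inf_{[T_0,t]}x = \inf_{C}x$ in the upper bound that genuinely requires $t$ to be a continuity point. This does not affect the validity of the proof, only the attribution of where the hypothesis enters.
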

\begin{proof}
Fix a continuity point $t$ of $x$ and choose $T>t$ as a continuity point of $x$ as well. Then by Lemma~\ref{lemma:Dinftyconvergence} we have $x^n\to x$ in $D([T_0,T])$. By Theorem 13.4.1 in \cite{whitt2002stochastic}, the map 
$
x \mapsto \inf_{s \in [T_0,\cdot]}x_s
$
is continuous from $D([T_0,T])$ to $D([T_0,T])$, and therefore the claim follows from 
Lemma~\ref{thm:M1contconvergence}.
\end{proof}

\section{Proofs regarding physical solutions}\label{appB}
We start by introducing some useful notation.
\begin{definition}\label{def:lawstopped}
If $(X,\tau,\L)$ is a solution to the McKean--Vlasov problem \eqref{eq:mckeanproblem}, we denote by $\lawstopped_{t-}$ the marginal subprobability distribution at time $t-$ of the particles surviving up to time $t$, i.e.,
\begin{align*}
\lawstopped_{t-}(A) := \P{\tau \geq t, X_{t-} \in A}, \quad t \geq 0
\end{align*}
and we denote the measure corresponding to the minimal solution as
$\lawstoppedmin$.
\end{definition}

The next technical lemma is a key result when it comes to showing that
physical solutions of the particle system converge to physical solutions of
the McKean--Vlasov problem. Roughly speaking, it says that there is a very small chance
of observing more than one jump of a macroscopic proportion of particles in a small interval and
can be seen as an extension of \eqref{eq:physicalparticlejump} to small intervals. 
This result in its original form is due to \cite[Proposition 5.3]{delarue2015particle}. 
We follow the proof given in \cite[Lemma 3.10]{ledger2021mercy} here.

\begin{lemma}\label{lem:physparticleestimate}
Suppose that $\E{X_{0-}} < \infty$ and fix $T > 0$. Let $(\Xphys^N,\Lphys^N)$ be a physical solution to the particle system \eqref{eq:particleN} and let $\lawstopped_{t-}^{N}$ be the corresponding subprobabilty measure as defined in Definition~\ref{def:lawstoppedparticle}. Then there is a constant $C > 0$ 
 such that for every (sufficiently small) $\e > 0$ 
\begin{align*}
\P{\lawstopped_{t-}^{N}([0,\alpha z + C \e^{1/3}]) \geq z ~~ \forall z \leq \frac{1}{\alpha}(\Lphys_{t+\e}^{N} - \Lphys_{t-}^{N}) - C \e^{1/3}} \geq 1 - C\e^{1/3}, \quad t < T,
\end{align*}
whenever $N \geq \e^{-1/3}$.
\end{lemma}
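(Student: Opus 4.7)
My plan is to combine a probabilistic control on the Brownian fluctuations over the small window $[t,t+\varepsilon]$ with an iterative exploitation of the physical jump condition \eqref{eq:physicalparticlejump} at every jump time of $\Lphys^N$ in $[t,t+\varepsilon]$. First I would set $Z_i:=\sup_{s\in[t,t+\varepsilon]}|B^i_s-B^i_{t-}|$, so that $(Z_i)_{i\le N}$ are iid and the reflection principle yields $\P{Z_1>\eta}\le 4e^{-\eta^2/(2\varepsilon)}$. Choosing $\eta=\eta_\varepsilon$ of order $\sqrt{\varepsilon|\log\varepsilon|}$ ensures $\P{Z_1>\eta}\le\varepsilon^{2/3}$; I would then define the good event
$$A:=\bigl\{\tfrac{1}{N}|\{i\le N:Z_i>\eta\}|\le\varepsilon^{1/3}\bigr\},$$
and apply Markov's inequality to the empirical mean of $\ind{\{Z_i>\eta\}}$ to conclude $\P{A^c}\le\varepsilon^{1/3}$. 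Crucially $\eta\ll\varepsilon^{1/3}$ for small $\varepsilon$, so the Brownian scale is absorbed into the $C\varepsilon^{1/3}$ slack later. On $A$ the bad set $G^c:=\{i:Z_i>\eta\}$ has size at most $\varepsilon^{1/3}N$.

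The heart of the argument is a cascade bookkeeping. Fix $z\le \Lphys^N_{t+\varepsilon}-\Lphys^N_{t-}-C\varepsilon^{1/3}$ with $C$ to be chosen, and set
$$s(z):=\inf\{s\ge t:\Lphys^N_s-\Lphys^N_{t-}>z\}\in[t,t+\varepsilon],\qquad y:=z-(\Lphys^N_{s(z)-}-\Lphys^N_{t-})\in[0,\Delta\Lphys^N_{s(z)}).$$
At each jump time $s_j\in[t,s(z))$ the physical interpretation of \eqref{eq:physicalparticlejump} identifies the defaulting particles with exactly those in $[0,\alpha\Delta\Lphys^N_{s_j}]$ at $s_j-$. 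The identity
$$\Xphys^{i,N}_{t-}=\Xphys^{i,N}_{s_j-}-(B^i_{s_j}-B^i_{t-})+\alpha(\Lphys^N_{s_j-}-\Lphys^N_{t-})$$
combined with $i\in G$ then yields $\Xphys^{i,N}_{t-}\le\alpha\Delta\Lphys^N_{s_j}+\alpha(\Lphys^N_{s_j-}-\Lphys^N_{t-})+\eta\le\alpha z+\eta$. For the boundary jump at $s(z)$, the strict form of \eqref{eq:physicalparticlejump} at the rank $\lfloor yN\rfloor$ produces at least $\lceil yN\rceil$ particles surviving to $s(z)-$ with position $\le\alpha y$, and the good ones among them also satisfy $\Xphys^{i,N}_{t-}\le\alpha z+\eta$ via the same decomposition.

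Summing the disjoint contributions from defaulted-before-$s(z)$ and surviving-to-$s(z)-$ particles and subtracting $|G^c|\le\varepsilon^{1/3}N$ only once, I obtain on $A$
$$N\lawstopped_{t-}^N([0,\alpha z+\eta])\ge N(\Lphys^N_{s(z)-}-\Lphys^N_{t-})+\lceil yN\rceil-\varepsilon^{1/3}N\ge Nz-\varepsilon^{1/3}N.$$
Shifting $z\mapsto z+\varepsilon^{1/3}$ and using $\alpha\varepsilon^{1/3}+\eta\le C\varepsilon^{1/3}$ for sufficiently large $C$ yields the required bound for all $z\le \Lphys^N_{t+\varepsilon}-\Lphys^N_{t-}-C\varepsilon^{1/3}$ on the event $A$ of probability $\ge 1-\varepsilon^{1/3}$.

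The main obstacle is the boundary case in which $\Lphys^N$ has a single jump at $s(z)$ that straddles $z$: using only the completed jumps before $s(z)$ one recovers $\Lphys^N_{s(z)-}-\Lphys^N_{t-}$, which can be strictly less than $z$. This is resolved by splitting the straddling jump with the strict discrete form of \eqref{eq:physicalparticlejump} at the rank $\lfloor yN\rfloor$, which supplies the missing surviving particles. A secondary care point is to subtract $|G^c|$ only once, which is legitimate because the defaulted and surviving sets are disjoint; the assumption $N\ge\varepsilon^{-1/3}$ is precisely what allows the $1/N$ discretization error to be absorbed into the $\varepsilon^{1/3}$ budget, and $\E{X_{0-}}<\infty$ enters only through the surrounding framework, guaranteeing a.s.\ finiteness of all default times on $[0,T]$.
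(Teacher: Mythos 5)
Your proof is correct, and its skeleton is the same as the paper's: both arguments iterate the discrete physical jump condition \eqref{eq:physicalparticlejump} over the jump times of $\Lphys^N$ in $[t,t+\e]$, trace each contributing particle back to time $t-$ through the identity $\Xphys^{i,N}_{t-}=\Xphys^{i,N}_{s-}-(B^i_s-B^i_t)+\alpha(\Lphys^N_{s-}-\Lphys^N_{t-})$, and discard the small fraction of particles whose Brownian increment over $[t,t+\e]$ is large. Two differences in execution are worth recording. First, where the paper decomposes $k=k_1+\cdots+k_m$ with $k_i\le N\Delta\Lphys^N_{t_i}$ and works with the events $E_1^{i,k}$, you take the greedy decomposition explicitly: all completed jumps before $s(z)$ plus a partial count $\lceil yN\rceil$ at the straddling jump, extracted from the strict inequality $\lawstopped^{N}_{s(z)-}([0,\alpha k/N])>k/N$ valid for $k/N<\Delta\Lphys^N_{s(z)}$. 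This is equivalent but more transparent, and it is what spares you the $1/N$ rounding loss, so you never actually invoke $N\ge\e^{-1/3}$. Second, and more substantively, your control of the Brownian fluctuations is the sharper one: the sub-Gaussian tail at level $\eta\asymp\sqrt{\e|\log\e|}\ll\e^{1/3}$ gives $\P{Z_1>\eta}\le\e^{2/3}$, and a single Markov inequality on the empirical fraction of bad indices yields $\P{A^c}\le\e^{1/3}$. The paper instead applies Markov to the first moment of $\sup_{h\le\e}|B^i_{t+h}-B^i_t|$ at level $\e^{1/3}$; since that first moment is of order $\sqrt{\e}$ rather than $\e$, the two successive Markov inequalities there only produce a bound of order $\e^{-1/6}$ for $\P{E^c}$, so your tail estimate is not merely cleaner but is what is actually needed to reach the stated rate $1-C\e^{1/3}$. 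Finally, because you never introduce the term $\e(1+\sup_s|\Xphys_s^{i,N}|)$ (a vestige of the drifted setting of Ledger--S{\o}jmark), your argument makes no use of $\E{X_{0-}}<\infty$; that is harmless. The only cosmetic omission is the trivial case $z<0$, where the claim holds because $\lawstopped^{N}_{t-}$ is nonnegative.
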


\begin{proof}[Proof of Lemma~\ref{lem:physparticleestimate}]
Note that $\frac{N}{\alpha}(\Lphys_{t+\e}^{N} - \Lphys_{t-}^{N})$ equals the number 
of particles defaulting in the interval $[t,t+\e]$. By definition, as $\Lphys^{N}$ is a physical solution, if $t_0$ is any jump time in $[t,t+\e]$, we must have
\begin{align*}
\lawstopped_{t_0 -}^{N} \left(\left[0, \alpha \frac{k}{N}\right]\right) \geq \frac{k}{N} \quad \text{for} \quad k = 0,1,\dots, \frac{N}{\alpha}\Delta \Lphys_{t_0}^{N}.
\end{align*}
Let $t_1,\dots,t_m$ be the jump times of $\Lphys^N$ in $[t,t+\e]$. If 
$k \leq \frac{N}{\alpha} (\Lphys_{t+\e}^{N}-\Lphys_{t-}^{N})$, then there are numbers $k_1,\dots,k_m$ such that $k_i \leq \frac{N}{\alpha}\Delta \Lphys_{t_i}^{N}$ and $k_1 + \dots + k_m = k$, which then shows that 
\begin{align*}
\sum_{i=1}^{m} \lawstopped_{t_i-}^{N}\left(\left[0,\alpha \frac{k_i}{N}\right]\right) \geq \frac{1}{N}\sum_{i=1}^{m}{k_i} = \frac{k}{N}.
\end{align*}
Now by definition, the number on the left-hand side of the above
inequality is dominated by $\frac{1}{N} \sum_{i=1}^{N} \ind{E_1^{i,k}}$ where
\begin{align*}
E_1^{i,k} := \big\{ \Xphys_{t-}^{i,N} - \alpha \frac{k}{N} - \e(1+ \sup_{s\leq t+\e} |\Xphys_s^{i,N}|) - \sup_{h \leq \e} |B_{t+h}^i - B_{t}^i|\leq0,~ \tau^{i,N} \geq t \big\}.
\end{align*}
We have obtained that for $k = 0,1,\dots, \frac{N}{\alpha}(\Lphys_{t+\e}^{N} - \Lphys_{t-}^{N})$
\begin{align}\label{eq:kEeventestimate}
\frac{1}{N} \sum_{i=1}^{N} \ind{E_1^{i,k}} \geq \frac{k}{N}.
\end{align}
Now fix $z\in\R$ such that $z \leq \frac{1}{\alpha}(\Lphys_{t+\e}^{N} - \Lphys_{t-}^{N}) - 2 \e^{1/3}$ and set $k_0 := \lfloor z + 2\e ^{1/3} \rfloor \leq \frac{N}{\alpha}(\Lphys_{t+\e}^{N} - \Lphys_{t-}^{N}).$ This choice implies that $z \geq \frac{k_0}{N} - 2 \e^{1/3}$ as well as $\frac{k_0}{N} \geq z + 2 \e^{1/3} - \frac{1}{N}$ by definition, and we also have that \eqref{eq:kEeventestimate} holds for $k=k_0$. Define
\begin{align*}
E_2^i := \big\{ \e(1+ \sup_{s\leq t+\e} |\Xphys_s^{i,N}|) + \sup_{h \leq \e} |B_{t+h}^i - B_{t}^i| \geq \e^{1/3}\big\}.
\end{align*}
On the event $E_1^{i,k_0} \cap (E_2^{i})^{c},$ we have $\Xphys_{t-}^{i,N} - \alpha \frac{k_0}{N} \leq \e^{1/3},$ and hence
\begin{align*}
\Xphys_{t-}^{i,N} - \alpha z \leq \Xphys_{t-}^{i,N} - \alpha \left(\frac{k_0}{N} - 2 \e^{1/3}\right) \leq (1+2\alpha)\e^{1/3}.
\end{align*}
It follows that
\begin{align*}
\lawstopped_{t-}^{N}([0,\alpha z + (1+2\alpha)\e^{1/3}]) \geq \frac{1}{N} \sum_{i=1}^{N} \ind{E_1^{i,k_0}} \ind{(E_2^{i})^c}.
\end{align*}
Finally, letting $E:= \big\{\frac{1}{N} \sum_{i=1}^{N} \ind{E_2^i} \leq \e^{1/3}\big\}$, we find that on the event $E$ we have
\begin{align*}
\lawstopped_{t-}^{N}([0,\alpha z + (1+2\alpha)\e^{1/3}]) &\geq \frac{1}{N} \sum_{i=1}^{N} \ind{E_1^{i,k_0}} - \frac{1}{N}\sum_{i=1}^{N} \ind{E_2^i} \geq \frac{k_0}{N} - \e^{1/3} \\
&\geq (z + 2\e^{1/3} - {1}/{N}) - \e^{1/3} = z + \e^{1/3} - {1}/{N}.
\end{align*}
Taking $N \geq \e^{-1/3}$ implies $\e^{1/3} - 1/N \geq 0$, so we conclude that
\begin{align*}
\lawstopped_{t-}^{N}([0,\alpha z + (1+2\alpha)\e^{1/3}]) \geq z \quad \text{on}\quad E, 
\end{align*}
for any $z \leq \frac{1}{\alpha}(\Lphys_{t+\e}^N - \Lphys_{t-}^{N}) - 2 \e^{1/3}$. 
It remains to prove that there is a $C >0$  such that $\P{E^c} \leq C\e^{1/3}$. To do that, note that $\mathbb E\big[\sup_{h \leq \e} |B_{t+h}^i - B_{t}^i|\big] \leq \sqrt{{\pi}/{2}} \e$ and
\begin{gather*}
\mathbb E\big[1+ \sup_{s\leq T} |\Xphys_s^{i,N}|\big] \leq 1 + \E{X_{0-}} + \mathbb E\big[\sup_{s\leq T}|B_s|\big] + \alpha \leq  1 +  \alpha + \E{X_{0-}} + \sqrt{{\pi}/{2}}T.
\end{gather*}
Now letting $c \geq 1 + \alpha + \E{X_{0-}} + \sqrt{\frac{\pi}{2}}(1+T)$ we find, by applying the Markov inequality twice
\begin{align*}
\P{E^c} &= \mathbb P\Big(\frac{1}{N}\sum_{i=1}^{N} \ind{E_2^i} > \e^{1/3}\Big)
\leq \e^{-1/3} \frac{1}{N} \sum_{i=1}^{N} \P{E_2^i} \\
&\leq\e^{1/3} (\mathbb E\big[1+ \sup_{s\leq T} |\Xphys_s^{i,N}|\big] + \mathbb E\big[\sup_{h \leq \e} |B_{t+h}^i - B_{t}^i|\big]) \leq 2c\e^{1/3}.
\end{align*}
Now $C:= \max(2c,(1+2\alpha))$ satisfies the requirements of the lemma.
\end{proof}

\begin{proof}[Proof of Theorem~\ref{thm:physconvergetophys}]
Let $\EmpMphys_N$ be the empirical measure corresponding to $\Xphys^N$.
From  Corollary~\ref{cor:Phantomcorollary} we know that there 
are random variables $\EmpMextphys, \EmpMextphys_N$ such that, after passing to subsequences if necessary,
$
\law(\EmpMphys_N) = \law(\iota_{\alpha}(\EmpMextphys_N)),$
$\EmpMextphys_N \to \EmpMextphys,$ and $\iota_{\alpha}(\EmpMextphys_N) \to \iota_{\alpha}(\EmpMextphys)$
almost surely. Without loss of generality we may  assume that $\EmpMphys_N = \iota_{\alpha}(\EmpMextphys_N)$ and set $\EmpMphys= \iota_{\alpha}(\EmpMextphys)$.
Note that this implies $\Linfphys = \alpha\langle \EmpMphys, \ellfunc \rangle$ and thus, by Proposition~\ref{thm:finitedimconvergence}, that $\Linfphys$ solves the McKean--Vlasov problem \eqref{eq:mckeanproblem}. 
In the following three steps we prove that $\Linfphys$ is physical by verifying Condition~\eqref{eqdef:physicaljump}.\\

\underline{Step 1:} We show convergence of the laws of the subprobability measures $\lawstoppedphys_{t-}^N$. For $t \geq 0,$ let $\pi_{t-}(x)$ denote the map $x \mapsto x_{t-}$ and define
the transformation $S\colon D([-1,\infty)) \rightarrow \R$ through 
$S_{t-}(x) = \pi_{t-}(x)(1-\ellfunc_{t-}(x))$. 
Note that for $f \in C_b(\R)$ and $\mu \in \mathcal{P}(D([-1,\infty)))$ it holds that
\begin{align*}
\langle S_{t-}(\mu),f \rangle = \int_{}f(x_{t-}\ind{[\tau_0(x)\geq t]})~\mathrm{d}\mu(x) = \int_{}f(x_{t-})\ind{[\tau_0(x)\geq t]}~\mathrm{d}\mu(x) + f(0) \langle \mu, \ellfunc_{t-}\rangle.
\end{align*}
It follows that
\begin{align*}
\langle \lawstoppedphys_{t-}^N, f \rangle &= \langle S_{t-}(\EmpMphys_N), f \rangle - \frac{f(0)}{\alpha} \Lphys_t^N, \quad
\langle \lawstoppedphys_{t-}, f \rangle = \langle S_{t-}(\lawphys), f \rangle - \frac{f(0)}{\alpha} \Linfphys_t. 
\end{align*}
Let $J$ be the set of discontinuity points of $\Linfphys$. We claim that $S_{t-} \circ \iota_{\alpha}$ is continuous at $\EmpMextphys$-almost every $(w,\ell) \in \ExtendedE$ whenever $t \notin J$. To see this, consider that $\pi_{t-}$ is
continuous at all paths $x \in D([-1,\infty))$ such that $t$ is a continuity point of $x$ by Lemma~\ref{thm:M1contconvergence}. Secondly, suppose that for $x = \iota_{\alpha}(w,\ell)$, the measure $\delta_x$ satisfies the crossing property \eqref{eq:crossingmu}. Then, applying Lemma~\ref{lemma:muellconvergence} for $\muext=\delta_{(w,\ell)}$ and for a sequence $(\muext^n)_{n\in \N}$ converging to $\muext$ with $\muext^n = \delta_{(w^n,\ell^n)}$, it follows that $\ellfunc_t \circ \iota_{\alpha}$ is continuous at $(w,\ell)$ except possibly for $t = \tau_0(x)$. The same holds for $\ellfunc_{t-}\circ \iota_{\alpha}$. We have
\begin{align*}
\EmpMextphys(\{(w,\ell) \in \ExtendedE: \tau_0(\iota_{\alpha}(w,\ell)) = t\}) = \EmpMphys(\{x \in D([-1,\infty)) \colon \tau_0(x) = t\}) =
\P{\tauphys = t}
\end{align*}
and by definition, $\P{\tauphys = t} > 0$ if and only if $t \in J$. Since for $x = \iota_{\alpha}(w,\ell)$, the measure $\delta_x$ satisfies the crossing property for  $\EmpMextphys$-almost every $(w,\ell) \in \ExtendedE$, we have thus proved the aforementioned continuity property of $S_{t-} \circ \iota_{\alpha}$. It follows from the Portmanteau theorem that for every $t \notin J$ 
\begin{align*}
\lim_{N\to\infty}  S_{t-}(\EmpMphys_N) = 
\lim_{N\to\infty} (S_{t-} \circ \iota_{\alpha})(\EmpMextphys_N)
= (S_{t-} \circ \iota_{\alpha})(\EmpMextphys)
= S_{t-}(\EmpMphys),
\end{align*}
almost surely on $\mathcal{P}(\R)$. Therefore we obtain, for $t \notin J$,
$$
\lim_{N\to\infty} \lawstoppedphys_{t-}^N = \lawstoppedphys_{t-},
$$
almost surely on $\mathcal{S}(\R)$, where $\mathcal{S}(\R)$ is the space
of subprobability measures on $\R$ endowed with the topology of weak convergence. \\

\underline{Step 2:} Fix $T>0$ and a sufficiently small $\e>0$. We take the limit as $N\to\infty$ of 
\begin{align}\label{eq:inequalityprobproofphysmin}
\P{\lawstoppedphys_{t-}^{N}([0,\alpha z + C \e^{1/3}]) \geq z ~~ \forall z \leq \frac{1}{\alpha}(\Lphys_{t+\e}^{N} - \Lphys_{t-}^{N}) - C \e^{1/3}} \geq 1 - C\e^{1/3}, \quad \quad t < T.
\end{align}
The above equation holds due to Lemma~\ref{lem:physparticleestimate}, where we make use of the assumption $\E{X_{0-}}<\infty$.
Fix $t, t+\e \in [0,T)\setminus J$ and $z\in \R$ such that  $z < \frac{1}{\alpha}(\Linfphys_{t+\e} - \Linfphys_{t-}) - C\e^{1/3}$. Introduce the events
\begin{align*}
A_z^{N}:= \left\{z \leq \frac{1}{\alpha}(\Lphys_{t+\e}^{N} - \Lphys_{t-}^{N}) - C\e^{1/3}\right\}.
\end{align*}
By assumption, we know that $\Lphys_{t+\e}^{N} - \Lphys_{t-}^{N} \to \Linfphys_{t+\e} - \Linfphys_{t-}$, and therefore it holds that
$ \lim_{N\to\infty} \P{A_z^N} = 1.$
Recalling Step 1 and \eqref{eq:inequalityprobproofphysmin}, on applying the Portmanteau theorem and the reverse Fatou lemma we find
\begin{align*}
\lawstoppedphys_{t-}[0,\alpha z+ C\e^{1/3}] &\geq \mathbb E\big[\limsup_{N\to\infty}\lawstoppedphys_{t-}^{N}[0,\alpha z+ C\e^{1/3}]\big] \geq \limsup_{N\to\infty} \E{\lawstoppedphys_{t-}^{N}[0,\alpha z+ C\e^{1/3}]} \\
&\geq \limsup_{N\to\infty} \E{\lawstoppedphys_{t-}^{N}[0,\alpha z+ C\e^{1/3}]\ind{A_z^N}} \geq z(1-C\e^{1/3}).
\end{align*}
We have established
\begin{align*}
\lawstoppedphys_{t-}[0,\alpha z+ C\e^{1/3}] \geq z(1-C\e^{1/3}), \quad z < \frac{1}{\alpha}(\Linfphys_{t+\e} - \Linfphys_{t-}) - C\e^{1/3}, \quad t,t+\e \in [0,T)\setminus J.
\end{align*}

\underline{Step 3:} We take the limit as $\e \to 0.$ To that end, consider that for $f \in C_b(\R)$, by the dominated convergence theorem, the map
\begin{align*}
t \mapsto \int_{}f(x_{t-})\ind{[\tau_{0}(x)\geq t]}~\mathrm{d}\EmpMphys(x) = \langle \lawstoppedphys_{t-},f\rangle
\end{align*}
is left-continuous, and hence we have
$
\lim_{s\nearrow t} \lawstoppedphys_{s-} = \lawstoppedphys_{t-}
$
in $\mathcal{S}(\R)$. Now fix $t \in [0,T)$ and let $t_n, \e_n$ be such that $t_n, t_n + \e_n \notin J$ with $t_n < t < t_n + \e_n$, $t_n \nearrow t$ and $\e_n \searrow 0$. Let $0 \leq z <\frac{1}{\alpha}\Delta\Linfphys_t$, then as $\Linfphys$ is \cadlag, for all sufficiently large $n$ we have $z < \frac{1}{\alpha}(\Linfphys_{t_n + \e_n} - \Linfphys_{t_n -}) - C\e_n^{1/3}.$ Then, for any $\ee > 0$, by  the Portmanteau theorem and Step~2, we obtain
\begin{align*}
\lawstoppedphys_{t-}[0,\alpha z + \ee] &\geq \limsup_{n\to\infty} \lawstoppedphys_{t_n-}[0,\alpha z + \ee]
\geq \limsup_{n\to\infty}\lawstoppedphys_{t_n-}[0,\alpha z + C\e_{n}^{1/3}]  \\
&\geq \limsup_{n\to\infty} z(1-C\e_n^{1/3})
=  z.
\end{align*} 
Letting $\ee \to 0$, as $T>0$ was arbitrary we finally see that
$
\lawstoppedphys_{t-}[0,\alpha z] \geq z$ for each $z < \frac{1}{\alpha}\Delta \Linfphys_t$ and $ t \geq 0,
$
which implies
\begin{align*}
\Delta \Linfphys_t \leq \alpha \inf\{z > 0 \colon \lawstoppedphys_{t-}[0,\alpha z] < z\}.
\end{align*}
The reverse inequality follows from Proposition~\ref{thm:physjumpinequality}.
\end{proof}

\section{Supplements}\label{appC}
\begin{example} \label{ex1}
Define $$ \Omega := C([0,\infty))^{\mathbb{N}}$$
and let $\omega$ be the the canonical process on $C([0,\infty))^{\N}$. We take $\mathcal{F}$ to be the product $\sigma$-field induced by the Borel $\sigma$-field on $C([0,\infty))$. Let $\mathbb{P} = \bigotimes_{n\in\N} \eta$ for a probability measure $\eta$ on $C([0,\infty))$,  such that the law of
$\omega^i$ under $\eta$ coincides with the law of $X_{0-}+B$
 for a Brownian motion $B$ and a random variable $X_{0-}$.
 We write $E:= D([-1,\infty))$ in the following.

Let $\mathcal{R}$ be the space of all random variables $\mu \colon \Omega \rightarrow \mathcal{P}(E)$ and let $\mu \in \mathcal{R}$. As the Borel $\sigma$-field in $\mathcal{P}(E)$ is generated by the mappings $\mu \mapsto \mu(A)$ where $A$ is a Borel set in $E$ (cf. \cite[Proposition 5.7]{carmona2018probabilisticI}), any such map is measurable. In addition, Lemma~\ref{lemma:limM1inf} shows that the map $x \mapsto \inf_{s\leq \cdot} x_s$ is continuous on $E$, and as the evaluation mappings $\pi_t := x \mapsto x_t$ are measurable, it follows that the set
$$ A_t:= \big\{x \in E\colon \pi_t \big(\inf_{s\leq \cdot} x_s\big) \in (-\infty,0]\big\}$$
is a Borel set in $E$. We find that
$ \langle \mu, \ellfunc_t \rangle = \mu(A_t),$
so we obtain that $\omega \mapsto \langle \mu(\omega), \ellfunc_t \rangle$ is measurable for every $t \geq 0$. As the Borel $\sigma$-field on $E$ is generated by the evaluation mappings, 
it follows that $\omega \mapsto \langle \mu(\omega), \ellfunc\rangle$ is measurable as a map into $E$. 

By Lemma~\ref{lemma:M1addition}, for $N \in \N$ the map 
\begin{align*}
\Psi^N &\colon E \times C([0,\infty))^{\N} \rightarrow \mathcal{P}(E) \\
     & (x,\omega) \mapsto \frac{1}{N} \sum_{i=1}^{N} \delta_{\omega^i - \alpha x} 
\end{align*}
is continuous. It follows that 
$$\Gamma_N[\langle \mu, \ellfunc\rangle](\omega) = \langle\Psi^N(\langle \mu(\omega), \ellfunc\rangle,\omega),\ellfunc\rangle
=\Psi^N(\langle \mu(\omega), \ellfunc\rangle,\omega)(A_t)$$ and is thus measurable. This shows that the map $\omega \mapsto c_N(\mu(\omega))$ is measurable for every $\mu \in \mathcal{R}$. Defining $$\mu_N^{(1)} := \Psi^N(0,\omega), \quad \mu_{N}^{(k)} := \Psi^N (\langle \mu_{N}^{(k-1)}(\omega), \ellfunc\rangle, \omega), \quad k \in \N,$$ by Lemma~\ref{lem:miniteration} we find that $\EmpM_N = \mu_{N}^{(N)} \in \mathcal{R}$.

\end{example}

\bibliographystyle{plainnat}

\end{document}